\newlist{steps}{enumerate}{1}
\setlist[steps, 1]{label = Step \arabic*:}
\theoremstyle{plain}
\newtheorem*{thm*}{Theorem}
\newtheorem{thm}{Theorem}[section]
\crefname{thm}{Theorem}{Theorems}
\Crefname{thm}{Theorem}{Theorems}
\newtheorem*{lem*}{Lemma}
\newtheorem{lem}[thm]{Lemma}
\crefname{lem}{Lemma}{Lemmas}
\Crefname{lem}{Lemma}{Lemmas}
\newtheorem*{claim*}{Claim}
\newtheorem{claim}[thm]{Claim}
\crefname{claim}{Claim}{Claims}
\Crefname{claim}{Claim}{Claims}
\crefname{prop}{Proposition}{Propositions}
\Crefname{prop}{Proposition}{Propositions}
\newtheorem{cor}[thm]{Corollary}
\crefname{cor}{Corollary}{Corollaries}
\Crefname{cor}{Corollary}{Corollaries}
\newtheorem{conj}[thm]{Conjecture}
\crefname{conj}{Conjecture}{Conjectures}
\Crefname{conj}{Conjecture}{Conjectures}
\crefname{qn}{Question}{Questions}
\Crefname{qn}{Question}{Questions}
\newtheorem{obs}[thm]{Observation}
\crefname{obs}{Observation}{Observations}
\Crefname{obs}{Observation}{Observations}
\crefname{ex}{Example}{Examples}
\Crefname{ex}{Example}{Examples}
\theoremstyle{definition}
\crefname{prob}{Problem}{Problems}
\Crefname{prob}{Problem}{Problems}
\crefname{defn}{Definition}{Definitions}
\Crefname{defn}{Definition}{Definitions}
\theoremstyle{remark}
\crefname{rem}{Remark}{Remarks}
\Crefname{rem}{Remark}{Remarks}
\xpatchcmd{\proof}{\itshape}{\normalfont\proofnamefont}{}{}
\newcommand{\proofnamefont}{}
\renewcommand{\proofnamefont}{\bfseries}
\newcommand{\remove}[1]{}
\newcommand{\ceil}[1]{
    \lceil #1 \rceil
}
\newcommand{\floor}[1]{
    \lfloor #1 \rfloor
}
\newcommand{\Q}{\mathcal{Q}}
\newcommand{\G}{\mathcal{G}}
\newcommand{\C}{\mathcal{C}}
\newcommand{\FF}{\mathcal{F}}
\newcommand{\eps}{\varepsilon}
\newcommand{\sub}{\mathrm{sub}}
\newcommand{\sbb}{\mathrm{sb}}
\newcommand{\ext}{\mathrm{ext}}
\newcommand{\final}{\mathrm{final}}
\newcommand{\triangulated}{triangulated}
\DeclareMathOperator{\Forb}{Forb}
\newcommand{\Forbstar}{\Forb^*}
\title{Chi-boundedness of graphs containing no cycles with $k$ chords}
\author{
	Joonkyung Lee\thanks{Department of Mathematics, Yonsei University, Seoul, South Korea.
E-mail: \texttt{joonkyunglee@yonsei.ac.kr}.
	Research supported by the National Research Foundation of Korea (NRF) grant funded by the Korea government MSIT NRF-2022R1C1C1010300, Samsung STF Grant SSTF-BA2201-02, and IBS-R029-C4.} 
	\and
	Shoham Letzter\thanks{
		Department of Mathematics, 
		University College London, 
		London WC1E~6BT, UK. 
		Email: \texttt{s.letzter}@\texttt{ucl.ac.uk}. 
		Research supported by the Royal Society.
    }
    \and
	Alexey Pokrovskiy\thanks{
	    Department of Mathematics, 
	    University College London,
	    London WC1E~6BT, UK.
	    Email: \texttt{a.pokrovskiy}@\texttt{ucl.ac.uk}.
	}
}
\begin{document}

\date{}
\maketitle

\begin{abstract}
    \noindent
    We prove that the family of graphs containing no cycle with exactly $k$-chords is $\chi$-bounded, for $k$ large enough or of form $\ell(\ell-2)$ with $\ell \ge 3$ an integer. This verifies (up to a finite number of values $k$) a conjecture of Aboulker and Bousquet (2015).

\end{abstract}

\section{Introduction} \label{sec:intro}
    The \emph{clique number} of a graph $G$, denoted $\omega(G)$, is the size of its largest clique in $G$. The \emph{chromatic number} of $G$, denoted $\chi(G)$, is the minimum number of colours in a \emph{proper vertex-colouring} of $G$, which is a colouring of the vertices where adjacent vertices have distinct colours. 
    It is easy to see that $\chi(G) \ge \omega(G)$ for every graph $G$, but the converse is far from the truth. Indeed, the chromatic number cannot be upper-bounded by a function of the clique number. This can be seen, for example, through a construction due to Mycielski \cite{mycielski1955coloriage} that provides a family of triangle-free graphs whose chromatic number is unbounded. 
    
    In 1987, Gy\'arf\'as \cite{gyarfas1987problems} proposed to study families of graphs for which the chromatic number can be upper-bounded in terms of the clique number. More precisely, Gy\'arf\'as called a family of graphs $\G$ \emph{$\chi$-bounded} if there is a function $f$ such that $\chi(G) \le f(\omega(G))$ for every $G \in \G$. 
    
    For a graph $F$, denote by $\Forb(F)$ the family of graphs that contain no induced copy of $F$. 
    This is a particularly interesting class of graphs for studying $\chi$-boundedness, as it leads us to various examples and conjectures. Namely, one may ask:
    for which graphs $F$ is $\Forb(F)$ $\chi$-bounded?
    If $F$ contains a cycle, $\Forb(F)$ is not $\chi$-bounded; indeed, this follows from the existence of graphs with arbitrarily large chromatic number and girth (where the \emph{girth} of a graph is the length of its shortest cycle), due to Erd\H{o}s \cite{erdos1959graph}.
    Gy\'arf\'as proved \cite{gyarfas1987problems} that $\Forb(F)$ is $\chi$-bounded when $F$ is a path or a star. 
    An intriguing conjecture regarding $\chi$-boundedness, due to Gy\'arf\'as \cite{gyarfas1975ramsey} and Sumner \cite{sumner1981subtrees}, asserts that $\Forb(F)$ is $\chi$-bounded for every forest $F$.
    If true, this would solve the above question regarding graphs $F$ for which $\Forb(F)$ is $\chi$-bounded. 
    The conjecture is known for some special cases, including all trees of radius $2$ \cite{kierstead1994radius} and some trees of radius $3$ \cite{kierstead2004radius}, but is widely open in general.
    
    For a graph $F$, denote by $\Forbstar(F)$ the family of graphs that do not contain an induced copy of a \emph{subdivision} of $F$, where a subdivision of $F$ is a graph obtained by replacing edges of $F$ by internally disjoint paths. 
    Scott \cite{scott1997induced} proved the following weakening of the G\'yarf\'as--Sumner conjecture: $\Forbstar(F)$ is $\chi$-bounded for every forest $F$. He also conjectured that $\Forbstar(F)$ is $\chi$-bounded for every graph $F$, but this turned out to be false \cite{pawlik2014triangle,chalpion2016restricted}. At the best of our knowledge, there seems to be no conjectured answer to the question that asks for which graphs $F$ is $\Forbstar(F)$ $\chi$-bounded.
    
    The fact that $\Forbstar(F) \subseteq \Forb(F)$ for every graph $F$ makes it natural to consider an `interpolation' between the two classes to ask an analogous question.
    More precisely, 
    fix an edge subset $E$ of $F$ and consider the class $\FF$ of graphs that contain no induced copy of a graph obtained from $F$ by only subdividing the edges in $E$, while leaving the other edges unchanged.
    For example, taking $F$ to be the graph obtained by adding a diagonal to a $4$-cycle and letting $E$ be set of edges in the cycle, the family $\FF$ obtained as above is the family of graphs that do not have a cycle with exactly one \emph{chord}; given a graph $G$, a \emph{chord} in a cycle $C$ in $G$ is an edge that joins two vertices in $C$ which are not adjacent in $C$.
    Trotignon and Vu\v{s}kovi\'c \cite{trotignon2010structure} showed that this family of graphs with no cycle with exactly one chord is $\chi$-bounded (in fact, they give a detailed description of the structure of such graphs). Inspired by this, Aboulker and Bousquet \cite{aboulker2015excluding} suggested to study the family $\C_k$ of graphs that do not contain a cycle with exactly $k$ chords. They conjecture that $\C_k$ is $\chi$-bounded for every $k \ge 1$, and prove it for $k \in \{2, 3\}$. Note that \cite{trotignon2010structure} establishes the conjecture for $k = 1$. Our main result is to prove the Aboulker--Bousquet conjecture for sufficiently large $k$.
    
	\begin{thm} \label{thm:main}
	    For every large enough $k$,\footnote{Concretely, it suffices to take $k \ge 10^{14}$.} there is a function $f_k$ such that, if $G$ is a graph with no cycle with exactly $k$ chords, then $\chi(G) \le f_k(\omega(G))$.
	\end{thm}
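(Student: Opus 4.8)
The plan is to argue by contradiction. Fix $k$ and $\omega$, suppose $G \in \C_k$ has $\omega(G)\le\omega$ but $\chi(G)>f_k(\omega)$ for a function $f_k$ to be chosen large enough, and try to produce a cycle with exactly $k$ chords. It is convenient to reformulate: a cycle with exactly $k$ chords is the same as a set $S\subseteq V(G)$ with $|S|\ge 3$ such that $G[S]$ has a Hamilton cycle and exactly $|S|+k$ edges --- the $k$ surplus edges being the chords. From this viewpoint the exceptional values $k=\ell(\ell-2)$ are immediate, since $K_{\ell,\ell}$ has $2\ell$ vertices, $\ell^2$ edges and a Hamilton cycle, so an \emph{induced} copy of $K_{\ell,\ell}$ in $G$ already gives a cycle with exactly $\ell^2-2\ell=\ell(\ell-2)$ chords. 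For general $k$ we must build $S$ by hand. Two preliminaries: passing to a colour-critical subgraph (preserving the chromatic number and not increasing the clique number) we may assume $G$ has large minimum degree; and we record the elementary observation that \emph{in any graph of $\C_k$, every vertex has at most $k+1$ neighbours on any induced path, hence on any hole} --- for if $v$ had neighbours $p_{a_0},\dots,p_{a_{k+1}}$ in order on an induced path $p_0\cdots p_n$, then $v\,p_{a_0}\,p_{a_0+1}\,\cdots\,p_{a_{k+1}}\,v$ is a cycle whose only chords are $v p_{a_1},\dots,v p_{a_k}$.

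The engine of the argument is a gadget construction. Suppose $G$ contains a hole $H$ together with an independent set $v_1,\dots,v_t\notin V(H)$ such that, for each $i$, the set $N(v_i)\cap V(H)$ has size $m_i+1\ge 4$ and lies inside a short arc $I_i$ of $H$, where $I_1,\dots,I_t$ are pairwise disjoint and occur in this cyclic order, and $\sum_i(m_i-2)\ge k$. Then $G$ has a cycle with exactly $k$ chords: traverse $H$ once, and on reaching the neighbours $g^i_0,\dots,g^i_{m_i}$ of $v_i$ on $I_i$, replace the stretch of $H$ from $g^i_0$ to $g^i_{j_i}$ by the detour $g^i_0\to v_i\to g^i_{j_i}$, for a chosen $j_i\in\{2,\dots,m_i\}$, and then continue along $H$. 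In the resulting cycle $C$ the two cycle-edges at $v_i$ are $v_i g^i_0$ and $v_i g^i_{j_i}$, while $v_i$'s remaining neighbours $g^i_{j_i+1},\dots,g^i_{m_i}$ still lie on $C$; hence $v_i$ contributes exactly $m_i-j_i$ chords, and --- because $H$ is chordless, the $I_i$ are disjoint, the $v_i$ are independent, and $g^i_0,g^i_{j_i}$ are non-adjacent in $H$ whenever $j_i\ge 2$ --- there are no other chords. As $j_i$ runs over $\{2,\dots,m_i\}$, the contribution of gadget $i$ runs over the whole interval $\{0,1,\dots,m_i-2\}$, so the number of chords of $C$ runs over the contiguous interval $\{0,1,\dots,\sum_i(m_i-2)\}$, which contains $k$; choosing the $j_i$ to realise $k$ finishes. (This is consistent with the observation above, which only forces each $m_i\le k$; for $k=\ell(\ell-2)$ one can aim instead at the rigid symmetric instance with about $\ell$ gadgets of size about $\ell$.)

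It remains to locate such a configuration inside any $G\in\C_k$ of sufficiently large chromatic number, and I expect this to be the real difficulty. I would proceed in two stages. First, a long hole in a chromatically heavy place: by the theorem of Scott and Seymour that a graph whose chromatic number is large relative to its clique number contains a hole of any prescribed length, long holes exist; but a hole may have little attached to it, so one must do the chromatic bookkeeping --- iterating $\chi(G)\le\chi(G[N[V(H)]])+\chi(G-N[V(H)])$ while controlling how much each hole deletes --- to obtain a long hole $H$ whose closed neighbourhood still has large chromatic number. This is the first genuine obstacle. Second, harvesting gadgets: many vertices are then attached to $H$, each with at most $k+1$ neighbours on it; one wants a large independent subfamily whose neighbourhoods on $H$ lie in pairwise disjoint short arcs and have size at least $4$, which an interval-scheduling argument (for the arcs) together with a Ramsey argument (for the independence) should supply, \emph{provided} enough of the attached vertices are rich --- several neighbours clustered in a short arc, and no stray neighbours elsewhere on $H$. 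The case where almost all attachments are poor (at most two, or widely spread, neighbours on $H$) is the second obstacle, where one has to chain several poor attachments into a more global configuration before extracting tunable chords. It is in balancing these extractions against the bound of $k+1$ from the observation that one expects to need either $k$ large (ample slack throughout) or $k$ of the form $\ell(\ell-2)$ (where a rigid $\ell\times\ell$-type template is available for other reasons). Assembling the two stages gives the gadget configuration, hence a cycle with exactly $k$ chords, contradicting $G\in\C_k$.
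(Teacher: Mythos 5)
Your gadget engine is internally sound: given a hole $H$ and an independent family $v_1,\dots,v_t$ whose neighbourhoods on $H$ are clusters of size $m_i+1\ge 4$ in pairwise disjoint arcs, with no stray neighbours, the chord count of the detoured cycle is indeed $\sum_i(m_i-j_i)$ and sweeps a contiguous interval containing $k$. (This would even sidestep the number-theoretic fine-tuning that the paper needs.) But the proposal has a fatal gap at exactly the point you defer: the configuration you need does not exist in general. A vertex with at least $4$ neighbours on a hole is (essentially) an induced wheel, and by Davies's construction \cite{davies2021triangle} there are triangle-free graphs of arbitrarily large chromatic number containing no induced wheel at all --- so in those graphs \emph{every} attachment to \emph{every} hole is ``poor'' in your sense, and your harvesting step has nothing to harvest. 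The poor-attachment case is therefore not a residual technicality but the entire problem, and you give no argument for it. Your first obstacle (a long hole whose neighbourhood retains large chromatic number) is also left unresolved, though that one is more plausibly fixable by standard levelling arguments.

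The paper's route is structured precisely to avoid relying on wheels. It first shows (\Cref{thm:bt}, \Cref{cor:bt-induced-bip}, via Kühn--Osthus, a linkage theorem, and a reworking of Bousquet--Thomassé that handles triangles by building cycles directly from common fathers) that large chromatic number forces either a cycle with exactly $k$ chords or an \emph{induced} $K_{\ell,\ell}$; in Davies-type examples it is the second alternative that occurs. The chords of the final cycle then come from \emph{inside} many disjoint induced copies of $K_{\ell,\ell}$ (a path of length $2a+1$ in $K_{\ell,\ell}$ carries $a^2$ chords), which are joined by unimodal paths in a multi-layer extraction; the contributions are quadratic rather than running over a contiguous interval, which is why the paper needs the Lagrange-type lemmas (\Cref{lem:twenty-squares}, \Cref{lem:sum-skewed-squares}) and the delicate parent-structure analysis of \Cref{sec:exact} to land on exactly $k$. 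Your observation that an induced $K_{\ell,\ell}$ immediately yields a cycle with exactly $\ell(\ell-2)$ chords is correct and is indeed how \Cref{thm:main-special} follows, but for general large $k$ the proposal as written does not constitute a proof.
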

	
    A \emph{$k$-wheel} is a cycle with an additional vertex that is adjacent to exactly $k$ vertices in the cycle, and a \emph{wheel} is a $3$-wheel. It is easy to see that a $k$-wheel contains a cycle with exactly $\ell$ chords, for any $\ell \le k-2$ (see \Cref{fig:wheel}).
    
    \begin{figure}[h]
        \centering 
        \begin{subfigure}[b]{.4\textwidth}
            \centering
            \includegraphics[scale = 1.1]{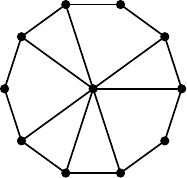}
            \vspace{.5cm}
            \caption*{A $7$-wheel}
        \end{subfigure}
        \hspace{1cm}
        \begin{subfigure}[b]{.4\textwidth}
            \centering
            \includegraphics[scale = 1.1]{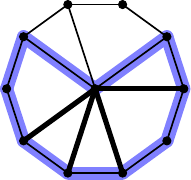}
            \vspace{.5cm}
            \caption*{A cycle with four chords in a $7$-wheel}
        \end{subfigure}
        \caption{Wheels}
        \label{fig:wheel}
    \end{figure}
    
    It is thus quite natural, given our discussion of graphs with cycles with a given number of chords, to also consider graphs avoiding wheels. Trotignon \cite{trotignon2013perfect} conjectured that the class of wheel-free graphs is $\chi$-bounded. This conjecture is now known to be false (see Davies \cite{davies2023triangle}), but a weaker version of it was proved by Bousquet and Thomass\'e \cite{bousquet-thomasse}; they showed that, for every integer $\ell \ge 1$, the class of graphs with no induced wheel or an induced $K_{\ell, \ell}$ is $\chi$-bounded. They also proved a similar result for $k$-wheels, with an additional triangle-free requirement; namely, they showed that, for every $\ell$, the class of graphs with no induced $k$-wheel, $K_{\ell,\ell}$ or triangle is $\chi$-bounded. Since a $2\ell$-cycle in $K_{\ell, \ell}$ has $\ell(\ell - 2)$ chords, taking $k = \ell(\ell - 2) + 2$ shows that the family of triangle-free graphs with no cycle with exactly $\ell(\ell-2)$ chords is $\chi$-bounded. We strengthen the latter statement by removing the triangle-free requirement.
	
	\begin{thm} \label{thm:main-special}
	    For every integer $\ell \ge 3$ there is a function $g_{\ell}$ such that, if $G$ is a graph with no cycles with exactly $\ell(\ell-2)$ chords, then $\chi(G) \le g_{\ell}(\omega(G))$.
	\end{thm}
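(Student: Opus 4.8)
\emph{Proof strategy.} The plan is to bootstrap from the triangle-free case. Write $m:=\ell(\ell-2)$ and let $\C_m$ denote the class of graphs with no cycle with exactly $m$ chords; this class is hereditary --- a chord of a cycle contained in an induced subgraph is a chord in the ambient graph --- and we shall use that $G[N(v)]$ has clique number at most $\omega(G)-1$ for every vertex $v$. As observed in the introduction, the $2\ell$-cycle of $K_{\ell,\ell}$ has exactly $m$ chords; moreover, an induced $(m+2)$-wheel contains a cycle with exactly $j$ chords for every $0\le j\le m$: if $v$ is the hub and its neighbours appear in the cyclic order $x_1,\dots,x_{m+2}$ around the rim, then the cycle consisting of $v$, the edges $vx_1$ and $vx_{j+2}$, and the rim-arc from $x_1$ to $x_{j+2}$ through $x_2,\dots,x_{j+1}$ has precisely the $j$ chords $vx_2,\dots,vx_{j+1}$. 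Hence no graph in $\C_m$ contains an induced $K_{\ell,\ell}$ or an induced $(m+2)$-wheel, so by the Bousquet--Thomass\'e theorem every triangle-free graph in $\C_m$ has chromatic number at most some constant $h_\ell$. I will prove $\chi(G)\le g_\ell(\omega(G))$ for all $G\in\C_m$ by induction on $\omega(G)$, with the triangle-free case furnishing the base $g_\ell(2):=h_\ell$.

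For the inductive step, fix $\omega\ge 3$, assume the bound $g_\ell(\omega-1)$ holds for every member of $\C_m$ with clique number at most $\omega-1$, and let $G\in\C_m$ with $\omega(G)=\omega$. For every vertex $v$ the graph $G[N(v)]$ lies in $\C_m$ and has clique number at most $\omega-1$, so $\chi(G[N(v)])\le g_\ell(\omega-1)=:d$. It therefore suffices to prove a lemma of the form: \emph{for every $d$ there is $\Phi(d,\ell)$ such that every $G\in\C_m$ with $\chi(G[N(v)])\le d$ for all $v$ satisfies $\chi(G)\le\Phi(d,\ell)$} --- that is, a strengthening of the triangle-free case (the instance $d=1$) from ``triangle-free'' to ``bounded local chromatic number''. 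Granting the lemma, $\chi(G)\le\Phi(g_\ell(\omega-1),\ell)=:g_\ell(\omega)$, which closes the induction.

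To prove the lemma, suppose for contradiction that $\chi(G)$ is enormous. First, the hypotheses force $G$ to have no $K_{t,t}$ subgraph with $t:=d(\ell-1)+1$: each side $A$ of such a subgraph lies inside a single neighbourhood, so $\chi(G[A])\le d$ and $A$ contains an independent set of size $\ell$; doing this on both sides exhibits an induced $K_{\ell,\ell}$, whose $2\ell$-cycle has exactly $m$ chords --- impossible. With this uniform control on complete bipartite subgraphs, I would follow the Bousquet--Thomass\'e strategy: pass to a breadth-first layering of $G$, restrict to a layer $L$ of large chromatic number, and, using that each ``cone'' $N(w)\cap L$ (for $w$ in the preceding layer) has chromatic number at most $d$, together with the $K_{t,t}$-freeness to control the bipartite structure between consecutive layers, extract a long induced path or cycle $Q$ together with a vertex (or short path) adjacent to many vertices of $Q$ with bounded local multiplicity. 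Sliding the closing edge along $Q$, exactly as in the $(m+2)$-wheel computation above, then produces a cycle with exactly $m$ chords, contradicting $G\in\C_m$.

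The heart of the matter, and the step I expect to be the main obstacle, is precisely this lemma. Relative to the triangle-free setting of Bousquet--Thomass\'e, the essential new difficulty is that triangles may now occur inside the cones, so both the extraction of a clean, \emph{induced} wheel-like configuration from a layer of large chromatic number and the exact bookkeeping of its chord count --- which must land on $m$ exactly, not merely exceed a threshold --- have to be reworked with care; I expect this can be carried out by adapting the Bousquet--Thomass\'e argument and reusing the structural lemmas developed for \Cref{thm:main}.
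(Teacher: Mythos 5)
Your reduction is sound as far as it goes: the observations that an induced $K_{\ell,\ell}$ yields a cycle with exactly $\ell(\ell-2)$ chords, that an induced $(m+2)$-wheel yields cycles with any number of chords up to $m$, and that bounded local chromatic number plus a $K_{t,t}$ subgraph forces an induced $K_{\ell,\ell}$ are all correct, and the induction on $\omega$ would indeed close if your lemma held. But the proposal has a genuine gap, and you have located it yourself: the lemma that every $G\in\C_m$ with $\chi(G[N(v)])\le d$ for all $v$ has bounded chromatic number is asserted, not proved. Your base case ($d=1$, i.e.\ triangle-free) is exactly the known Bousquet--Thomass\'e consequence that the paper explicitly sets out to strengthen; everything new in \Cref{thm:main-special} lives in the step from $d=1$ to general $d$, and ``follow the Bousquet--Thomass\'e strategy\ldots with care'' does not supply it. The Bousquet--Thomass\'e argument uses triangle-freeness in an essential way (fathers of adjacent vertices are automatically non-adjacent to the other endpoint, which is what keeps their wheel configurations induced and their chord counts controlled), and no soft modification of their layering argument recovers this under a mere local-chromatic-number hypothesis.

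For comparison, the paper does not induct on $\omega$ at all for this theorem. It proves \Cref{thm:bt} directly for graphs of bounded clique number (a weaker hypothesis than bounded local chromatic number) by first passing, via \Cref{thm:ko}, to an induced $1$-subdivision of a highly connected graph $H$ inside a deep extraction, and then splitting on whether many subdivision vertices share ``fathers'' with their endpoints. When they do (\Cref{lem:all-x-bad}), the abundance of these near-triangles is exploited \emph{constructively}, via the linkage theorem and a case analysis on the parity of $k$ and on the adjacency pattern $\sigma\in\{0,1,2,3\}$ in \Cref{claim:paths-one-fathers}, to assemble a cycle with \emph{exactly} $k$ chords; when they do not, the triangle-free machinery applies. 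This father-sharing dichotomy and the exact chord bookkeeping it requires is the content your sketch is missing. Note also that ``reusing the structural lemmas developed for \Cref{thm:main}'' would be circular in spirit: those lemmas (\Cref{cor:bt-induced-bip} downstream) themselves rest on \Cref{thm:bt}, which is precisely the statement your missing lemma would need to replace.
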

	
	The proof of our main result, \Cref{thm:main}, is rather technical. We thus include also a proof of a weaker result, allowing us to find, given large $k$ and a graph whose chromatic number is much larger than its clique number, a cycle with $k'$ chords, where $k' \in \{k, k+1, k+2, k+3\}$; see \Cref{thm:main-almost}.
	This proof already includes a lot of the main ideas that come into the proof of the main, exact result, but avoid many of the technical difficulties.
	
	In the next section, we give an outline of the proofs, focusing on the relatively simpler results \Cref{thm:main-special,thm:main-almost}. After that, in \Cref{sec:prelim}, we define the basic notions that we shall need and mention related observations. In \Cref{sec:bt} we show that, given an integer $k \ge 1$, graphs whose chromatic number is much larger than their clique number either contain a large induced bipartite graph or a cycle with exactly $k$ chords. This will allow us to immediately deduce \Cref{thm:main-special}, and will also be a key component in the proofs of \Cref{thm:main-almost,thm:main}. In \Cref{sec:number-theory} we provide two number theoretic lemmas, related to Lagrange's four square theorem. Finally, in \Cref{sec:approximate} we prove the approximate version of our main result, namely \Cref{thm:main-almost}, and in \Cref{sec:exact} we prove the main result, \Cref{thm:main}.

\section{Proof overview} \label{sec:overview}
    
    The first step in our proof (accomplished in \Cref{sec:bt}) proves the following: given positive integers $\ell$ and~$k$, if $G$ is a graph with large enough chromatic number in terms of a function of its clique number, then it contains either an induced copy of $K_{\ell, \ell}$ or a cycle with exactly $k$ chords (see \Cref{cor:bt-induced-bip}).
    Our proof of this result partly follows the proof of a related result due to Bousquet and Thomass\'e \cite{bousquet-thomasse},
    which asserts that, given $\ell$ and $k$, every graph with large enough chromatic number contains a triangle, an induced $K_{\ell, \ell}$, or a $k$-wheel. 
    Recall that every $(k+2)$-wheel (defined before \Cref{fig:wheel}) contains a cycle with exactly $k$ chords (see \Cref{fig:wheel}). Bousquet--Thomass\'e's result implies a weaker version of our first step, where the graph $G$ is assumed to be triangle-free. To avoid the triangle-free assumption, we leverage the fact that cycles with a prescribed number of chords are easier to find than wheels. This immediately implies our second main result, \Cref{thm:main-special}.
    
    The next step connects to a classical result in number theory, which may be of independent interests.
    Recall that Lagrange's four square theorem asserts that every natural number can be expressed as the sum of four integer squares. 
    In \Cref{sec:number-theory}, we prove two variants of this result. The first (\Cref{lem:twenty-squares}) shows that every large enough integer can be expressed as the sum of exactly 20 integer squares that are all larger than a given constant. The second result, given in \Cref{lem:sum-skewed-squares}, is a similar statement, but here we aim to express a large number $x$ as the sum of integers of the form $a(a+1)$, for large $a$. To achieve this we require that $x$ be divisible by $4$ and we express it as a sum of 80 numbers of the aforementioned form.
    
    In \Cref{sec:approximate}, we prove an approximate version (\Cref{thm:main-almost}) of our main result where instead of finding a cycle with exactly $k$ chords, we find a cycle whose number of chords is in $\{k, k+1, k+2, k+3\}$. To do this, we first find, for some large $\ell$, many induced copies of $K_{\ell, \ell}$ with no edges between them, by applying the first paragraph above several times. To find a cycle with the required number of chords, we take paths of appropriate lengths in the copies of $K_{\ell, \ell}$, and join these by \emph{unimodal paths} (unimodal paths are a commonly used object in the study of $\chi$-boundedness; we will explain what they are in the next section). A key point here is that the unimodal connections contribute $O(\sqrt{k})$ chords (see \Cref{lem:sqrtk-different-Kll,lem:sqrtk-same-Kll}). To get the right number of chords, we apply one of the two number-theoretic lemmas from the previous paragraph, depending on a certain parity condition. The case that requires the use of the second lemma above is the reason why this approach yields only an approximate result.
    
    In \Cref{sec:exact} we prove the exact version of our main result, \Cref{thm:main}. To achieve this we proceed similarly to the paragraph above, but analyse much more carefully the interaction between the various copies of $K_{\ell, \ell}$; this leads to a rather technical proof. We will give an overview of this proof at the beginning of \Cref{sec:exact}.

\section{Preliminaries} \label{sec:prelim}

    Recall that a \emph{chord} in a cycle $C$ is an edge joining two non-consecutive vertices in $C$. 
    Analogously, a \emph{chord} in a path $P$ is an edge that joins two non-consecutive vertices in $P$. 
    It will be convenient to note that, given a graph $G$ and a cycle $C$, the number of chords in $C$ is $e(G[V(C)]) - |C|$.

    We now introduce the notions of \emph{extractions} and \emph{unimodal paths}, which are commonly used in the study of $\chi$-boundedness. The motivation behind these notions is the following observation. For a graph $G$, a vertex $u$ and an integer $i \ge 0$, denote by $N_i(u)$ the set of vertices in $G$ at distance exactly $i$ from $u$. For brevity, given a set of vertices $U$, we write $\chi(U)$ to denote $\chi(G[U])$.
    
    \begin{obs} \label{obs:extraction}
        Let $u$ be a vertex in a graph $G$. Then there is an integer $i \ge 1$ such that $\chi(N_i(u)) \ge \chi(G)/2$. 
    \end{obs}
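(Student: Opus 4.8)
The plan is to exploit two elementary structural facts about distance layers: the sets $N_0(u), N_1(u), N_2(u), \dots$ partition the connected component of $u$ (so, assuming as we may that $u$ lies in a component of maximum chromatic number, they partition an induced subgraph of $G$ achieving $\chi(G)$), and there are no edges of $G$ between $N_i(u)$ and $N_j(u)$ whenever $|i-j| \ge 2$.

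First I would group the layers by parity, writing $V_{\mathrm{even}} = \bigcup_{i \text{ even}} N_i(u)$ and $V_{\mathrm{odd}} = \bigcup_{i \text{ odd}} N_i(u)$. Since layers of the same parity differ by at least $2$ and hence span no edges between them, $G[V_{\mathrm{even}}]$ is the disjoint union of the graphs $G[N_i(u)]$ over even $i$, so $\chi(V_{\mathrm{even}}) = \max_{i \text{ even}} \chi(N_i(u))$, and similarly $\chi(V_{\mathrm{odd}}) = \max_{i \text{ odd}} \chi(N_i(u))$.

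Next, because $V_{\mathrm{even}}$ and $V_{\mathrm{odd}}$ partition the (relevant) vertex set, concatenating proper colourings of $G[V_{\mathrm{even}}]$ and $G[V_{\mathrm{odd}}]$ with disjoint palettes gives $\chi(G) \le \chi(V_{\mathrm{even}}) + \chi(V_{\mathrm{odd}})$. Hence one parity, say even, satisfies $\chi(V_{\mathrm{even}}) \ge \chi(G)/2$, and by the previous paragraph there is an even $i$ with $\chi(N_i(u)) \ge \chi(G)/2$. The only point that needs attention — and the closest thing to an obstacle, though it is entirely routine — is ensuring we may take $i \ge 1$: the witnessing index could be forced to be $i = 0$ only when $\chi(\{u\}) = 1 \ge \chi(G)/2$, i.e. $\chi(G) \le 2$, and in that degenerate range one checks directly that the odd layers carry chromatic number at least $\chi(G) - 1 \ge \chi(G)/2$ while $N_1(u) \ne \emptyset$ whenever $G$ has an edge. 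So the whole observation is essentially a "two-colour the parity classes of distance layers" argument, with only this mild bookkeeping (and the reduction to a single component) to dispose of.
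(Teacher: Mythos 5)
Your proof is correct and is essentially identical to the paper's: both split the distance layers by parity, use that same-parity layers span no edges so the chromatic number of each parity class is the maximum over its layers, and conclude from $\chi(G) \le \chi(V_{\mathrm{even}}) + \chi(V_{\mathrm{odd}})$. Your extra bookkeeping about forcing $i \ge 1$ and restricting to one component is care the paper omits, but it does not change the argument.
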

    
    \begin{proof}
        Let $U_0 = \bigcup_{i \ge 0} N_{2i}(u)$ and let $U_1 = \bigcup_{i \ge 0} N_{2i+1}(u)$. 
        Since there are no edges between distinct $N_{2i}(u)$'s, $\chi(U_0) = \max_{i \ge 0} \chi(N_{2i}(u))$. 
        In particular, there exists $i_0$ such that $\chi(U_0) = \chi(N_{2i_0}(u))$. By the same argument, there exists $i_1$ such that $\chi(U_1) = \chi(N_{2i_1+1}(u))$. Since $\{U_0, U_1\}$ is a partition of $V(G)$, we have $\chi(G) \le \chi(U_0) + \chi(U_1) = \chi(N_{2i_0}(u)) + \chi(N_{2i_1+1}(u))$. Therefore, $\max\{\chi(N_{2i_0}(u), \chi(N_{2i_1+1}(u))\} \ge \chi(G) / 2$, as claimed.
    \end{proof}
    
    Trivially, every vertex in $N_i(u)$ has a `backward' neighbour in $N_{i-1}(u)$.
    Let $N_{\le i}(u) := \bigcup_{0 \le j \le i}N_i(u)$.
    By chasing the backward neighbours of two vertices $x$ and $y$ in $N_i(u)$ repeatedly until they first intersect or have an edge between, we obtain a path between $x$ and $y$ in $N_{\le i}(u)$ that is induced except the edge $xy$, if it is an edge in $G$.
    If this path has length $\ell$, then the distance-$j$ vertices, $j\leq \lfloor\ell/2\rfloor$, from $x$ or $y$ are in $N_{i-j}(u)$.
    
    Together with these paths, \Cref{obs:extraction} can be reformulated in a more systematic way.
    Namely, every graph $G$ has a subgraph $G_1$ with the following properties: 
    \begin{itemize}
        \item 
            $\chi(G_1) \ge \chi(G)/2$,
        \item
            every vertex in $G_1$ has a neighbour in $V(G) \setminus V(G_1)$,
        \item
            for every distinct pair of vertices $x$ and $y$ in $G_1$, there is a path $P$ between $x$ and $y$ satisfying the following: the path is induced except for the edge $xy$; the interior of $P$ lies in $V(G) \setminus V(G_1)$; and the vertices in $P$ apart from $x$, $y$, and their neighbours in $P$ send no edges to $V(G_1)$.
    \end{itemize}
    Such a subgraph $G_1$ is called an \emph{extraction} (or a \emph{$1$-extraction}) of $G$, and the path joining two vertices in $G_1$ with the above properties is called a \emph{unimodal path} with respect to $G_1$. 
    
    One can repeat this process to obtain a sequence $G_0 = G \supseteq G_1 \supseteq \ldots \supseteq G_p$ such that $G_{i}$ is an extraction of $G_{i-1}$. Such a sequence $G_0 \supseteq \ldots \supseteq G_p$ is called \emph{extractions} and $G_p$ is a \emph{$p$-extraction} of $G$. 
    The \emph{$i$-th layer} of an extractions means $V(G_{i-1}) \setminus V(G_{i})$.
    We will often say that $G_p$ is a $p$-extraction of $G$ without specifying the corresponding sequence of extractions.
    Once a $p$-extraction~$G_p$ of $G$ is fixed, a \emph{unimodal path in the $i$-th layer} is a unimodal path with respect to $G_i$ with ends in $G_p$. 
    This is an induced path except for the edges between the ends in $G_p$, whose interior is contained in $V(G_{i-1}) \setminus V(G_i)$. 
    Moreover, the vertices in $P$ apart from the end vertices and their neighbours in the path send no edges to $V(G_i)$. 
    The following observation is immediate from the definitions, yet crucial for our arguments.
    
    \begin{obs} \label{obs:unimodal}
        Let $P$ and $Q$ be unimodal paths in a $p$-extraction of a graph $G$ in layers $i$ and $j$, respectively, with $i < j$.
        Then the vertices in $P$ other than the end vertices and their neighbours in the path send no edges to $V(Q)$.
    \end{obs}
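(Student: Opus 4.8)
The plan is to unwind the two definitions and observe that the statement is then immediate. Recall that a $p$-extraction comes with a nested sequence $G_0 = G \supseteq G_1 \supseteq \cdots \supseteq G_p$, and that the $m$-th layer is $V(G_{m-1}) \setminus V(G_m)$.

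First I would record exactly what each hypothesis gives. Since $P$ is a unimodal path in layer $i$, it is by definition a unimodal path with respect to $G_i$ with ends in $G_p$; and the defining property of a unimodal path with respect to $G_i$ states precisely that every vertex of $P$ other than its two ends and their neighbours in $P$ sends no edge to $V(G_i)$. Hence the whole content of the observation reduces to the single inclusion $V(Q) \subseteq V(G_i)$.

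To verify this inclusion I would read off what the hypothesis on $Q$ gives: the ends of $Q$ lie in $V(G_p)$, and the interior of $Q$ lies in the $j$-th layer, i.e.\ in $V(G_{j-1}) \setminus V(G_j) \subseteq V(G_{j-1})$. Since $i < j$ we have $i \le j-1$, so by nestedness $V(G_p) \subseteq V(G_{j-1}) \subseteq V(G_i)$, and therefore $V(Q) \subseteq V(G_i)$. Combining this with the previous paragraph, no vertex of $P$ apart from its ends and their $P$-neighbours has a neighbour in $V(Q)$, which is exactly the claim.

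There is no genuine obstacle here; the only point requiring (mild) care is the index shift in the definition of a layer, which forces the bookkeeping step ``$i < j \Rightarrow i \le j-1 \Rightarrow V(G_{j-1}) \subseteq V(G_i)$''. Everything else is a direct reading of the definitions of \emph{extraction} and \emph{unimodal path}, which is why the paper flags the observation as immediate.
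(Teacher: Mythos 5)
Your proof is correct and is exactly the definition-unwinding the paper has in mind: the paper gives no proof, declaring the observation immediate from the definitions, and your reduction to the inclusion $V(Q)\subseteq V(G_i)$ (via $V(G_p)\subseteq V(G_{j-1})\subseteq V(G_i)$ for $i<j$) is the intended argument.
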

    
    As a final piece of notation, given an index $i \in [0,p]$ and a vertex $x$ in $G_i$, an \emph{$i$-father} of $x$ is a neighbour of $x$ in $V(G_{i-1}) \setminus V(G_i)$.

    We close this section by mentioning the classical K\H{o}vari--S\'os--Tur\'an theorem, which we shall use frequently in what follows.
    The theorem will allow us to find large complete bipartite graphs in bipartite graphs with positive edge density.

    \begin{thm}[K\H{o}v\'ari--S\'os--Tur\'an~\cite{KST54}] \label{thm:kovari-sos-turan}
		For every $\eps>0$ and a positive integer $\ell$, there exists $n_0$ such that the following holds:
		let $G$ be a bipartite graph with a bipartition $A\cup B$ such that $|A|, |B| \ge n_0$. If $G$ has at least $\eps |A| |B|$ edges, then $G$ contains a copy of $K_{\ell, \ell}$.
	\end{thm}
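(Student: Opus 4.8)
The plan is to use the classical double-counting argument on $\ell$-stars. Fix $\eps > 0$ and $\ell \ge 1$, and let $n_0$ be large in terms of $\eps$ and $\ell$; the precise size needed will emerge from the argument. Let $G$ be a bipartite graph with parts $A, B$ of size at least $n_0$ and with $e(G) \ge \eps |A| |B|$, and suppose, towards a contradiction, that $G$ contains no copy of $K_{\ell, \ell}$. Call a pair $(b, S)$ with $b \in B$ and $S \in \binom{A}{\ell}$ \emph{good} if $S \subseteq N(b)$, write $d(b) = |N(b)|$, and let $M$ be the number of good pairs. The idea is to bound $M$ from below using the edge density of $G$, and from above using the absence of $K_{\ell, \ell}$.

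Counting good pairs according to their first coordinate gives $M = \sum_{b \in B} \binom{d(b)}{\ell}$. Since $\sum_{b \in B} d(b) = e(G) \ge \eps |A| |B|$, the average of $d(b)$ over $b \in B$ is at least $\eps |A|$, which exceeds $\ell$ once $n_0 \ge \ell / \eps$. The map $x \mapsto \binom{x}{\ell}$ is convex, non-decreasing and non-negative on $[\ell - 1, \infty)$ and vanishes at $\ell - 1$, so extending it by $0$ on $[0, \ell - 1]$ yields a convex, non-decreasing function on $[0, \infty)$ that agrees with $d \mapsto \binom{d}{\ell}$ at every non-negative integer; Jensen's inequality therefore gives
\[
    M \;=\; \sum_{b \in B} \binom{d(b)}{\ell} \;\ge\; |B| \binom{\eps |A|}{\ell} \;\ge\; |B| \cdot \frac{(\eps |A| - \ell)^{\ell}}{\ell!} \;\ge\; |B| \cdot \frac{(\eps |A| / 2)^{\ell}}{\ell!},
\]
where the last step uses $\eps |A| \ge 2\ell$, which holds once $n_0 \ge 2\ell / \eps$.

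Counting good pairs according to their second coordinate gives $M = \sum_{S \in \binom{A}{\ell}} \bigl| \bigcap_{a \in S} N(a) \bigr|$. If some $S \in \binom{A}{\ell}$ had at least $\ell$ common neighbours in $B$, then $S$ together with $\ell$ of those neighbours would span a copy of $K_{\ell, \ell}$; hence every such $S$ has at most $\ell - 1$ common neighbours, so $M \le (\ell - 1) \binom{|A|}{\ell} \le (\ell - 1) \cdot |A|^{\ell} / \ell!$. Combining the two estimates and cancelling $|A|^{\ell} / \ell!$ yields $|B| \le (\ell - 1)(2/\eps)^{\ell}$, which contradicts $|B| \ge n_0$ provided $n_0 > (\ell - 1)(2/\eps)^{\ell}$. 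The contradiction shows that $G$ contains a copy of $K_{\ell, \ell}$, as required.

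This argument is standard and self-contained, so there is no genuine obstacle; the one point to watch is choosing $n_0$ large enough in terms of $\eps$ and $\ell$ that all the inequalities above hold simultaneously, i.e.\ $n_0 \ge 2\ell / \eps$ (which subsumes $n_0 \ge \ell/\eps$) and $n_0 > (\ell - 1)(2/\eps)^{\ell}$. For instance $n_0 = \lceil \ell (2/\eps)^{\ell} \rceil$ works for all $\eps \le 1$, and one may assume $\eps \le 1$ since $e(G) \le |A| |B|$ makes the hypothesis vacuous otherwise.
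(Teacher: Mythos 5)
Your proof is correct: the double count of pairs $(b,S)$ with $S\subseteq N(b)$, the Jensen lower bound via the convex extension of $x\mapsto\binom{x}{\ell}$, and the upper bound $M\le(\ell-1)\binom{|A|}{\ell}$ from $K_{\ell,\ell}$-freeness are all handled properly, and your choice of $n_0$ makes every inequality valid. The paper cites this as the classical K\H{o}v\'ari--S\'os--Tur\'an theorem without proof, and your argument is precisely the standard one it implicitly relies on, so there is nothing to reconcile.
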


\section{Obtaining large complete bipartite graphs} \label{sec:bt}

	The main purpose of this section is to prove the following theorem, which finds `large' complete bipartite graphs in graphs $G$ with large chromatic number which have no cycle with exactly $k$ chords.
	
    \begin{thm} \label{thm:bt}
		For every integers $k, \ell \ge 1$, there exists a function $g$ such that for every graph $G$ one of the following holds: $\chi(G) \le g(\omega(G))$; $G$ contains a $K_{\ell, \ell}$; or $G$  contains a cycle with exactly $k$ chords.
	\end{thm}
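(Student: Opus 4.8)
Since $K_{2\ell}$ contains $K_{\ell,\ell}$ as a subgraph, if $\omega(G) \ge 2\ell$ then the second outcome already holds. So it will suffice to produce a constant $N = N(k,\ell)$ such that every $K_{\ell,\ell}$-free graph $G$ with $\omega(G) < 2\ell$ and $\chi(G) > N$ contains a cycle with exactly $k$ chords; one may then take $g \equiv N$. From now on we assume $G$ is $K_{\ell,\ell}$-free with $\omega(G) < 2\ell$ and $\chi(G) > N$.

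\textbf{The configuration we aim for.} We will look for an induced path $P = p_0 p_1 \cdots p_m$ and a vertex $v \notin V(P)$ with at least $k+2$ neighbours on $P$. This already suffices: letting $i_1 < \cdots < i_{k+2}$ be the first $k+2$ indices (in path order) with $p_{i_j} \in N(v)$, form the cycle $C = v\, p_{i_1}\, p_{i_1+1}\, \cdots\, p_{i_{k+2}}\, v$. Its vertex set is $\{v\} \cup \{p_j : i_1 \le j \le i_{k+2}\}$; since $P$ is induced, the edges inside $V(C)$ are the $i_{k+2}-i_1$ edges of the subpath together with the edges out of $v$, of which there are exactly $k+2$ because no neighbour of $v$ on $P$ lies strictly between $p_{i_1}$ and $p_{i_{k+2}}$ other than the listed ones. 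Hence $C$ has $e(G[V(C)]) - |C| = (i_{k+2}-i_1 + k+2) - (i_{k+2}-i_1+2) = k$ chords. A $(k+2)$-wheel contains precisely such a configuration, with $C$ in fact chordless; isolating this weaker configuration (we never need to certify that $C$ has no chords) is exactly what will let us dispense with the triangle-free hypothesis used by Bousquet and Thomass\'e \cite{bousquet-thomasse}.

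\textbf{Finding the configuration.} The plan is to re-run, for graphs of bounded clique number in place of triangle-free graphs, the BFS-levelling argument by which Bousquet and Thomass\'e \cite{bousquet-thomasse} show that a triangle-free $K_{\ell,\ell}$-free graph of large enough chromatic number contains a $(k+2)$-wheel. Concretely: iterating \Cref{obs:extraction} --- that is, passing to a $p$-extraction $G_p$ for a suitable constant $p$ --- keeps $\chi(G_p)$ above a large constant while gaining that every vertex of $G_p$ has a father in the layer below and any two vertices of $G_p$ are joined by a unimodal path confined to a prescribed layer, so that by \Cref{obs:unimodal} unimodal paths in distinct layers interact only near their ends. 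Because $\omega(G_p) < 2\ell$ and $\chi(G_p)$ is large, $G_p$ contains a long (constant-length) induced path $P$, as $\Forb(P_t)$ is $\chi$-bounded \cite{gyarfas1987problems}. One then studies how the vertices in the layers below attach to $P$ and to its closing-up unimodal paths: if some such vertex has at least $k+2$ neighbours on an induced path formed from these, we are done; otherwise attachments are sparse, and a counting argument should produce either many vertices with $\ge \ell$ common neighbours on $P$ (a $K_{\ell,\ell}$, via \Cref{thm:kovari-sos-turan}) or a single vertex with a large-chromatic-number piece hanging off it, letting us descend a layer and repeat. The one place where Bousquet--Thomass\'e invoke triangle-freeness is to certify that the cycle produced at the end is chordless; without it we instead track the boundedly many surplus chords and trim $v$'s neighbour list at the end so that the final count is exactly $k$.

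\textbf{Where the difficulty lies.} The hard part is this last step: pushing the Bousquet--Thomass\'e levelling machinery through without the triangle-free assumption, i.e.\ controlling, for graphs of bounded clique number, the interaction between a high-chromatic layer and the vertices below it so as to force either a vertex with $k+2$ neighbours spread along an induced path or a dense bipartite pattern. The reduction, the chord count, and the appeals to \Cref{thm:kovari-sos-turan} and to the $\chi$-boundedness of $\Forb(P_t)$ are routine; the bulk of the labour is in the quantitative bookkeeping --- how large $N$ must be, and how long $P$ and how deep the extraction must be taken.
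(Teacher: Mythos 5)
There is a genuine gap. Your reduction to bounded clique number, your observation that a vertex with $k+2$ neighbours on an induced path yields a cycle with exactly $k$ chords, and your chord count are all correct, and they match the paper's starting point (the remark before \Cref{lem:make-Q-independent} that a $(k+2)$-wheel contains a cycle with exactly $k$ chords). But the entire substance of the theorem is the part you defer to ``the hard part,'' and your diagnosis of that part is inaccurate. You assert that the only place Bousquet--Thomass\'e use triangle-freeness is ``to certify that the cycle produced at the end is chordless,'' and that one can instead ``track the boundedly many surplus chords and trim.'' That is not where the obstruction lies. The paper's route is to first invoke K\"uhn--Osthus (\Cref{thm:ko}) to find an induced $1$-subdivision $H_\sbb$ of a highly connected graph $H$ inside a $p$-extraction, and the dichotomy that replaces triangle-freeness is whether the subdivision vertices $\sub(xy)$ share $j$-fathers with $x$ or $y$ for many layers $j$. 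When they do not (for some special vertex $x$), the Bousquet--Thomass\'e machinery for building an $\alpha$-good collection of unimodal paths goes through essentially unchanged (\Cref{lem:good-x}) and produces a $k_2$-wheel, hence a cycle with exactly $k$ chords. When they do --- the genuinely new case --- there is no wheel to trim; instead the paper uses the abundance of these ``triangulated'' edges together with the Thomas--Wollan linkage theorem (\Cref{thm:linkage}) to route pairwise disjoint paths through $H$ and assemble a cycle with exactly $k$ chords directly, with a parity-sensitive case analysis (Cases 1--3 of \Cref{lem:all-x-bad}) and small gadgets (\Cref{claim:paths-one-fathers}) to hit the exact count when $k$ is odd. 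None of this is ``bookkeeping'': without the induced subdivision of a highly connected graph there is no mechanism for closing up paths into cycles whose chord set you control, and your sketch (a long induced path plus an unspecified counting argument that ``should produce'' a dense bipartite pattern or a descent) does not supply one.

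A secondary issue: your plan to find a long induced path via the $\chi$-boundedness of $\Forb(P_t)$ and then study attachments to it is not how either the paper or Bousquet--Thomass\'e proceed, and it is unclear how it could force a single vertex to have $k+2$ neighbours on that path; a vertex with many neighbours in a high-chromatic set need not have many neighbours on any particular induced path inside it. The correct configuration to aim for is the one the paper builds: a wheel obtained from independent good unimodal paths attached to the neighbourhood of a vertex of $H$, or, failing that, the explicit linkage-based cycles of \Cref{lem:all-x-bad}.
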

	
	The following corollary of the previous theorem finds induced copies of $K_{\ell,\ell}$ in graphs $G$ with large chromatic number and no cycle with $k$ chords.
	
	\begin{cor} \label{cor:bt-induced-bip}
	    For every integers $k, \ell \ge 1$ there exists a function $h$ such that for every graph $G$ one of the following holds: $\chi(G) \le h(\omega(G))$; $G$ contains an induced $K_{\ell, \ell}$; or $G$ contains a cycle with exactly $k$ chords.
	\end{cor}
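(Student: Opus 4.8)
The plan is to deduce \Cref{cor:bt-induced-bip} from \Cref{thm:bt} by passing from a (possibly non-induced) copy of $K_{\ell',\ell'}$ to an induced copy of $K_{\ell,\ell}$ via Ramsey's theorem, at the cost of letting the size of the complete bipartite graph demanded from \Cref{thm:bt} grow with $\omega(G)$. Recall that Ramsey's theorem provides, for all integers $s,t\ge 1$, a number $R(s,t)$ such that every graph on at least $R(s,t)$ vertices contains a clique of size $s$ or an independent set of size $t$.

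Given $k$ and $\ell$, I would define the function $h$ as follows. For each integer $w\ge 1$ set $\ell'(w):=R(w+1,\ell)$, let $g_{k,\ell'(w)}$ be the function supplied by \Cref{thm:bt} applied with parameters $k$ and $\ell'(w)$, and put $h(w):=g_{k,\ell'(w)}(w)$. Now let $G$ be an arbitrary graph and write $w=\omega(G)$. If $\chi(G)\le h(w)$ the first conclusion of \Cref{cor:bt-induced-bip} holds, so assume $\chi(G)>h(w)=g_{k,\ell'(w)}(\omega(G))$. Applying \Cref{thm:bt} with $k$ and $\ell'=\ell'(w)$, the outcome $\chi(G)\le g_{k,\ell'}(\omega(G))$ is excluded, so either $G$ has a cycle with exactly $k$ chords, and we are done, or $G$ contains a copy of $K_{\ell',\ell'}$ with parts $A,B$ of size $\ell'=R(w+1,\ell)$.

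In the latter case, since $\omega(G[A])\le\omega(G)=w$ the graph $G[A]$ contains no clique of size $w+1$, so by the choice of $\ell'$ it contains an independent set $A'$ with $|A'|=\ell$; similarly $G[B]$ contains an independent set $B'$ with $|B'|=\ell$. As all edges between $A$ and $B$ are present and $A',B'$ are independent, $G[A'\cup B']$ is an induced copy of $K_{\ell,\ell}$, which completes the proof. The only point requiring care is that the parameter $\ell'$ fed into \Cref{thm:bt} must be permitted to depend on $\omega(G)$; since $h$ itself is allowed to depend on $\omega(G)$ this causes no difficulty, and there is no genuine obstacle in this step.
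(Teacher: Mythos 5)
Your proposal is correct and follows essentially the same route as the paper: apply \Cref{thm:bt} with a larger parameter $L=R(\omega+1,\ell)$ depending on $\omega(G)$, then extract independent sets of size $\ell$ from each side of the resulting $K_{L,L}$ via Ramsey's theorem to obtain an induced $K_{\ell,\ell}$. The only difference is that you make the choice of $L$ explicit where the paper writes $L\gg\ell,\omega$.
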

	
	\begin{proof}
	    Let $g_{k, \ell}$ be a function as in \Cref{thm:bt} for $k$ and $\ell$. For each $\omega \ge 1$, we define $h_{k, \ell}(\omega) := g_{k, L}(\omega)$ for some $L$ satisfying $L \gg \ell, \omega$. Now consider a graph $G$. By choice of $h_{k, \ell}$, one of the following holds: $\chi(G) \le g_{k, L}(\omega(G)) = h_{k, \ell}(\omega(G))$; $G$ contains a $K_{L, L}$; or $G$ contains a cycle with exactly $k$ chords. In all but the second case, we are done, so suppose that $G$ contains a $K_{L, L}$ and denote the corresponding bipartition by $\{A, B\}$. By Ramsey's theorem and the choice of $L$, each of $G[A]$ and $G[B]$ contain an independent set of size $\ell$. It follows that $G$ contains an induced $K_{\ell, \ell}$, as required. 
	\end{proof}
	
	Notice that this corollary immediately implies \Cref{thm:main-special}, using the observation that an induced $K_{\ell, \ell}$ contains a cycle with exactly $\ell(\ell - 2)$ chords.
	
	Before turning to the proof of \Cref{thm:bt}, we mention two preliminary results.
	The first one, due to Thomas and Wollan~\cite{thomas-wollan}, shows that $10k$-connected graphs are \emph{$k$-linked}, namely one can join any $k$ pairs of vertices by pairwise vertex-disjoint paths. This improved on an earlier result of Bollob\'as and Thomason~\cite{bollobas1996highly}.

    \begin{thm}[Thomas--Wollan~\cite{thomas-wollan}] \label{thm:linkage}
		Let $k \ge 0$ and let $G$ be a $10k$-connected graph. For every set of (not necessarily distinct) vertices $x_1, \ldots, x_k, y_1, \ldots, y_k$ there exist paths $Q_1, \ldots, Q_k$ with pairwise disjoint interiors, such that $Q_i$ has ends $x_i, y_i$.
	\end{thm}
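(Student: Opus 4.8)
The plan is to derive the theorem from the sharper edge-density statement of Thomas and Wollan, handling the (routine) reformulation with repeated terminals separately. Assume $k\ge 1$, as for $k=0$ there is nothing to prove. The core claim is: \emph{if $G$ is a graph on $n$ vertices that is $2k$-connected and has $e(G)\ge 5kn$ edges, then $G$ is $k$-linked}, meaning that for any $2k$ \emph{distinct} vertices $s_1,\dots,s_k,t_1,\dots,t_k$ there are pairwise vertex-disjoint paths $P_i$ from $s_i$ to $t_i$. Granting this, the theorem follows: a $10k$-connected graph has minimum degree at least $10k$, hence at least $5kn$ edges, and in particular is $2k$-connected, while $n\ge 10k+1>2k$. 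To pass from distinct to possibly repeated terminals with only disjoint \emph{interiors} required, blow up each distinct terminal $v$ into a clique $C_v$ with one vertex per occurrence of $v$ among the $2k$ requested endpoints, each new vertex inheriting the neighbourhood of $v$; this preserves $10k$-connectivity, since the minimum degree stays $\ge 10k$ and any vertex cut of the blown-up graph projects, on merging the cliques, to a cut of $G$ of no larger size. Applying the distinct case in the blown-up graph and then merging each $C_v$ back to $v$ (shortcutting if a path revisits a clique) yields paths $Q_i$ that are pairwise vertex-disjoint except possibly at common \emph{endpoints}, i.e.\ with disjoint interiors; a pair with $x_i=y_i$ collapses to a trivial path.

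For the edge-density statement I would argue by contradiction with a counterexample $(G,s_1,\dots,t_k)$ chosen with $|V(G)|$ minimum and then $|E(G)|$ minimum; put $X=\{s_1,\dots,s_k,t_1,\dots,t_k\}$, so $|X|=2k<n$ and there is a non-terminal vertex. Two reductions carve out the structure of such a minimal counterexample. \emph{Vertex deletion}: if some $v\notin X$ has $G-v$ still $2k$-connected and $e(G-v)\ge 5k(n-1)$ --- which holds whenever $\deg(v)\le 5k$ --- then $G-v$ is a smaller counterexample, since any linkage in $G-v$ is one in $G$; hence every non-terminal vertex of degree at most $5k$ lies in a cut of $G$ of size exactly $2k$. \emph{Edge contraction}: if $uv\in E(G)$ with $v\notin X$ has at most $5k-1$ common neighbours and $G/uv$ is $2k$-connected, then $G/uv$ is a smaller counterexample: it has $e(G)-1-|N(u)\cap N(v)|\ge 5k(n-1)$ edges, it is $2k$-connected, and a linkage in $G/uv$ lifts to $G$ (a path through the contracted vertex runs through $u$ and/or $v$, inserting the edge $uv$ when the path crosses from an edge at $u$ to an edge at $v$; this uses $v\notin X$, though $u$ may be a terminal, in which case the relevant path starts at $u$). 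Consequently, in a minimal counterexample every edge with a non-terminal endpoint either spans at least $5k$ common neighbours, or has both endpoints in a common cut of size exactly $2k$ (forcing $\kappa(G)=2k$).

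The remaining task --- turning this abundance of $2k$-element separators and locally very dense neighbourhoods into the desired $k$ disjoint paths --- is the crux, and is where Thomas and Wollan's argument does its real work and where the constant is calibrated to $10k$. The minimum $2k$-cuts organise $G$ into a tree-like decomposition into $(2k)$-connected pieces meeting in $2k$-element separators; the terminals sit in these pieces, and a piece disjoint from $X$ has, by the contraction dichotomy above, a neighbourhood dense enough to route paths through it freely. The plan is to link each $s_i$ to $t_i$ by pushing a path from each end toward a common "hub" piece and completing the connection inside it, using that every $2k$-separator simultaneously carries $2k$ disjoint paths by Menger's theorem; the delicate part is to perform all $k$ routings at once and consistently along the decomposition, with the bound $e(G)\ge 5kn$ supplying exactly the slack needed at each separator. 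It is worth stressing why the obvious shortcut is unavailable: one cannot simply find a $K_{3k}$-minor in $G$ and route the terminals through it in the classical Bollob\'as--Thomason manner, because the extremal function for $K_{3k}$-minors grows like $k\sqrt{\log k}$, which is superlinear in $k$, so a bounded multiple of $kn$ edges cannot force such a minor; the clique-minor argument must be replaced by the separator-based analysis sketched above.
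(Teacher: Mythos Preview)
The paper does not prove this theorem at all: it is quoted as a black-box result from Thomas and Wollan \cite{thomas-wollan} and used without proof, so there is no ``paper's own proof'' to compare your proposal against.

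On its own merits, your write-up is an accurate high-level outline of the Thomas--Wollan argument (reduce to the edge-density criterion, set up a minimal counterexample, analyse the deletion/contraction reductions), but you explicitly concede that the remaining routing step through the decomposition into $2k$-cuts ``is the crux'' and do not carry it out. So what you have is a proof sketch that still defers the essential content to the cited paper --- which is effectively what the present paper does as well, only more tersely.

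One small technical point on your reduction from possibly repeated terminals to distinct ones: the claim that ``any vertex cut of the blown-up graph projects, on merging the cliques, to a cut of $G$ of no larger size'' is not quite immediate, since a separator $S'$ in the blow-up might meet a clique $C_v$ in several vertices while $v$ contributes only once to the projected set, and you then need to check that the projected set really separates $G$ (the case where one side of $G'-S'$ is entirely contained in cliques meeting $S'$ needs a word). This is easily repaired --- for instance by observing that vertices of $C_v$ are true twins, so a \emph{minimum} cut of the blow-up can be taken to contain all or none of each $C_v$ --- but as written it is a small gap.
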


    The second preliminary result, due to K\"uhn and Osthus \cite{kuhn2004induced}, allows us to find a subdivision of a $\kappa$-connected graph.
    
	\begin{thm}[K\"uhn--Osthus~\cite{kuhn2004induced}] \label{thm:ko}
	    Let $\chi \gg \kappa, \ell$. 
	    Then for every graph $G$ one of the following holds: $\chi(G) \le \chi$; $G$ contains a $K_{\ell, \ell}$; or $G$ contains, as an induced subgraph, a $1$-subdivision of a $\kappa$-connected graph $H$.
	\end{thm}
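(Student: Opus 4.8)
The plan is to reduce to finding an induced $1$-subdivision of a complete graph: if $G$ contains an induced $1$-subdivision of $K_m$ for some $m \ge \kappa+1$, we are done by taking $H := K_m$, which is $\kappa$-connected. So fix a large integer $m$ and assume $\chi(G) > \chi$ and $G$ has no $K_{\ell,\ell}$; the goal is to produce an independent set of \emph{branch vertices} $b_1,\dots,b_m$ together with distinct \emph{subdivision vertices} $p_{ij}$ $(1 \le i < j \le m)$, all different from the $b_i$, such that the $p_{ij}$ are pairwise non-adjacent and $p_{ij}$ is adjacent to $b_k$ precisely when $k \in \{i,j\}$.

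First I would pass to a subgraph of large minimum degree. Since a graph every subgraph of which has a vertex of degree less than $d$ is $(d-1)$-degenerate and hence $d$-colourable, from $\chi(G) > \chi$ we obtain a subgraph $G_1 \subseteq G$ with $\delta(G_1) \ge \chi$; in particular $G_1$ has more than $\chi \gg m$ vertices and every vertex has a neighbourhood of size at least $\chi$. Moreover $G_1$ is still $K_{\ell,\ell}$-free, and I would exploit this through the elementary fact that no $\ell$ vertices of $G_1$ have $\ell$ common neighbours; combined with a standard convexity count over $\ell$-subsets, this bounds the number of vertices having an unusually large common neighbourhood with any fixed bounded set of vertices.

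The core of the argument builds $b_1,\dots,b_m$ and the $p_{ij}$ greedily, maintaining at every stage that what has been built so far already induces a $1$-subdivision and that each chosen $b_i$ retains a large \emph{reservoir} $R_i \subseteq N(b_i)$ of vertices sending no edge into the current structure apart from the edge to $b_i$. To add a new branch vertex we pick a vertex $v$ outside the current structure with no neighbour in it that meets each reservoir in a controlled way — enough neighbours to continue, but only a small fraction of the reservoir — which is possible because $G_1$ is large, of large minimum degree, and $K_{\ell,\ell}$-free (so few vertices overlap heavily with a fixed reservoir); we set $b_{t+1} := v$, and then for each earlier $i$ we choose $p_{i,t+1}$ inside $N(b_i) \cap N(b_{t+1}) \cap R_i$ so as to avoid the vertices and neighbourhoods already committed, shrinking the reservoirs accordingly. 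Because $R_i$ sends no edge to any $b_k$ with $k \ne i$, the vertex $p_{i,t+1}$ automatically has the correct adjacencies to the branch vertices; the only genuine constraints are the pairwise non-adjacencies among the $p$'s, of which there are $\Theta(m^2)$.

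The main obstacle — and the technical heart of the Kühn–Osthus argument \cite{kuhn2004induced} — is precisely keeping the construction \emph{induced} while keeping all reservoirs large: each forbidden edge can delete a whole neighbourhood, so a naive greedy would exhaust the reservoirs over the $\Theta(m^2)$ rounds. The remedy is to choose the branch vertices so that their neighbourhoods are sufficiently \emph{spread out}, using repeatedly that no $\ell$ vertices share $\ell$ common neighbours, so that successive subdivision vertices are drawn from essentially disjoint regions and the required non-edges come almost for free. Making ``spread out'' quantitatively precise, and tracking how the shrinking error terms accumulate over the construction so that they stay negligible against the still-enormous reservoir sizes, is where the real work lies; this is exactly the point at which one uses $\chi \gg \kappa,\ell$, to supply all the slack needed.
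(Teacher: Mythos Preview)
The paper does not prove this theorem; it is quoted from K\"uhn and Osthus \cite{kuhn2004induced} and used as a black box. So there is no in-paper proof to compare against, only the cited argument.

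Your sketch is in the right spirit, but one step is doing more than it can bear. You aim directly for an induced $1$-subdivision of $K_m$, which forces each newly chosen branch vertex $b_{t+1}$ to have, simultaneously for \emph{every} $i\le t$, a neighbour in the reservoir $R_i$. Your text asserts that such a $v$ exists (``enough neighbours to continue''), but nothing in the hypotheses gives this: large minimum degree and $K_{\ell,\ell}$-freeness tell you that \emph{few} vertices have many neighbours in a fixed $R_i$, not that \emph{some} vertex has a neighbour in each of $R_1,\dots,R_t$ at once. This is a genuine gap, not a detail to be filled later.

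The K\"uhn--Osthus argument avoids this by not insisting on a clique. They build an induced $1$-subdivision of \emph{some} graph $H$ of large minimum degree: branch vertices are chosen so that each has many potential subdivision partners, but there is no requirement that every pair be linked. One then passes to a $\kappa$-connected subgraph $H'\subseteq H$ via Mader's theorem, and the restriction of the $1$-subdivision to $V(H')$ together with the subdivision vertices for $E(H')$ is still induced. This matches the statement's conclusion (``a $\kappa$-connected graph $H$'', not $K_m$) and removes precisely the simultaneous-common-neighbour obstruction your greedy scheme runs into.
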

	
	Recall that a $1$-subdivision of a graph $H$ is the graph obtained from $H$ by replacing each edge $uv$ by a path $uw_{uv}v$, where the vertices $w_{uv}$ are distinct. 
	For brevity, denote by $H_\sbb$ the 1-subdivision of a graph $H$. 
  
	\subsection{Finding a large complete bipartite graph}
	
	In the remainder of this section, we prove \Cref{thm:bt}.
	To do so, we assume that we are given a graph $G$ with large chromatic number and which contains no large complete bipartite subgraphs. The starting is \Cref{thm:ko} that says that under this assumption $G$ contains an induced copy of a $1$-subdivision of a highly connected graph $H$. In fact, we may assume that this subdivision lies in a $p$-extraction of $G$, for some large $p$. Our proof splits-off into two cases: either we can emulate the triangle-freeness assumption that Bousquet and Thomass\'e \cite{bousquet-thomasse} impose (when proving that every triangle-free graph with large chromatic number contains either a large complete bipartite subgraph or a long wheel), or not. In the former case our proof largely follows \cite{bousquet-thomasse}, and in the latter case we use the abundance of triangles to construct a cycle with the right number of chords.
	
	Throughout this section, we use the following setup. The parameters are chosen according to the hierarchy
    \begin{equation*}
        \chi \gg p \gg \kappa \gg k_1 \gg k_2 \gg k, \ell, \omega. 
    \end{equation*}
    Let $G$ be a $K_{\ell,\ell}$-free graph with $\chi(G) = \chi$ and $\omega(G) \le \omega$.
    Let $G \supseteq G_1 \supseteq \ldots \supseteq G_p$ be a sequence of extractions, so that $\chi(G_p) \ge 2^{-p}\chi$. We will show that $G$ contains a cycle with exactly $k$ chords.

    By \Cref{thm:ko}, there is a $\kappa$-connected graph $H$ whose $1$-subdivision $H_\sbb$ is an induced subgraph of~$G_p$. Fix an induced copy of $H_\sbb$ in $G_p$ and denote by $\sub(e)$ the vertex of the induced copy of $H_\sbb$ in $G_p$ that subdivides the edge $e$ in $H$. We also identify vertices in $H$ as corresponding vertices in the copy of $H_\sbb$ (and hence, in $G_p$). When considering a neighbour $z$ of a vertex $x$ in $H$, we say that $z$ is an $H$-neighbour of $x$ to stress that the adjacency is not in the host graph $G_p$.
    
    The following lemma allows us to deal with the case where at every vertex $x$ in $H$, for many $H$-neighbours $z$ of $x$, $\sub(xz)$ has a common $j$-father with at least one of $x$ and $z$. 

	\begin{lem} \label{lem:all-x-bad}
		Suppose that every vertex $x$ in $H$ is incident with at least $2k_1$ edges $e = xy$ in $H$ such that $\sub(e)$ has a common $j$-father with at least one of $x$ and $y$, for at least $k_1$ indices $j$.
		Then $G$ contains a cycle with exactly $k$ chords.
	\end{lem}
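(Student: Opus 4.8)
The plan is to exploit the abundance of triangles supplied by the hypothesis. If $e = xy$ is an edge of $H$ for which $\sub(e)$ shares a $j$-father $w$ with, say, $x$, then $\{x, \sub(e), w\}$ is a triangle, since $x$ is adjacent to $\sub(e)$ in $H_\sbb$. I would take a long \emph{chordless} cycle $C$ of $G_p$ coming from the subdivision $H_\sbb$ and then \emph{detour} exactly $k$ of its edges through such fathers: for a detoured edge, replace the segment $x - \sub(e)$ of $C$ by $x - w - \sub(e)$, so that $x$ and $\sub(e)$ stay on the cycle but at distance two, turning $x\sub(e)$ into a chord. If every detour contributes exactly one chord and no other, the resulting cycle has exactly $k$ chords.

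Concretely, I would proceed in four steps. \emph{(1) Cleaning.} The bad edges of $H$ — those $e = xy$ for which $\sub(e)$ shares a $j$-father with $x$ or $y$ for at least $k_1$ indices $j$ — form a subgraph of minimum degree $\ge 2k_1$, hence contain a matching $e_1, \dots, e_m$ with $m \ge k_1 \gg k$; and, by pigeonhole over the two endpoints, each $e_a$ comes equipped with a distinguished endpoint $q_a$ and a set $J_a$ of at least $k_1/2$ indices $j$ for which $\sub(e_a)$ has a common $j$-father with $q_a$. \emph{(2) Base cycle.} Since $H$ is $10k$-connected (as $\kappa \gg k$), \Cref{thm:linkage} lets us link the endpoints of $k$ of the $e_a$'s into a cycle of $H$ through internally disjoint paths; its image in $H_\sbb$ is a cycle $C$ of $G_p$ that is automatically chordless, because in $H_\sbb$ branch vertices are pairwise non-adjacent and a subdivision vertex is adjacent only to its two branch endpoints. \emph{(3) Detours.} Choose $j_a \in J_a$ pairwise distinct (this is possible since $|J_a| \ge k_1/2 \ge k$), let $w_a$ be a common $j_a$-father of $q_a$ and $\sub(e_a)$, and form $C'$ from $C$ by replacing each edge $q_a\sub(e_a)$ by the path $q_a\,w_a\,\sub(e_a)$. \emph{(4) Counting.} The $k$ edges $q_a\sub(e_a)$ are now chords of $C'$; there are no chords among the vertices of $C$ alone, as these span a chordless cycle in $G_p$; so it suffices to show that the inserted fathers create no further chords, i.e.\ that the $w_a$'s are pairwise non-adjacent and that no $w_a$ is adjacent to a vertex of $C$ other than $q_a$ and $\sub(e_a)$.

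Step (4) is the crux and the main obstacle, since the length-two detours $q_a\,w_a\,\sub(e_a)$ are too short for \Cref{obs:unimodal} to constrain $w_a$ directly. I would resolve it using the generous slack in the construction together with the remaining hypotheses. First, if some candidate father is adjacent to at least $k+2$ vertices of a chordless cycle of $G_p$ — which one can build, via the high connectivity of $H$, through many of that father's neighbours in $H_\sbb$ — then this cycle together with the father contains a $(\ge k+2)$-wheel, hence a cycle with exactly $k$ chords (as noted before \Cref{fig:wheel}), and we are done; thus we may assume that every candidate father has only boundedly many neighbours in $V(H_\sbb)$. This allows us to re-route the base cycle $C$, again using the high connectivity of $H$, so that it avoids the stray neighbours of the chosen fathers. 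Finally, to make the $w_a$'s pairwise non-adjacent, I would use that each $a$ has at least $k_1/2 \gg k$ candidate fathers (one per index in $J_a$, lying in distinct layers and hence distinct) together with $\omega(G) \le \omega$ and the $K_{\ell,\ell}$-freeness of $G$, extracting a well-behaved transversal by a Ramsey-type argument — any sufficiently large family of candidates contains a large independent subset, as $G$ has bounded clique number. With all of this in place, each detour contributes precisely one chord and step (4) yields a cycle with exactly $k$ chords.
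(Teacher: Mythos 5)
Your steps (1)--(3) reproduce, in essence, Case~1 of the paper's proof: the paper also extracts a matching of ``triangulated'' edges, picks for each a common father $f(e)$ with distinct layer indices, cleans up adjacencies among the fathers and between fathers and the rest of $H_\sbb$ (via \Cref{claim:neighbours-Hs}, an independent-set argument using $K_{\omega+1}$-freeness, and a Tur\'an-type argument), links the triangles into a cycle with \Cref{thm:linkage}, and detours through the fathers so that each edge $x_i\,\sub(e_i)$ becomes a chord. Your step (4) concerns about stray neighbours of the $w_a$'s on the rest of the cycle and about edges among the $w_a$'s are real but handleable, exactly as you suggest.

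The genuine gap is in the claim that ``each detour contributes precisely one chord.'' The vertex $\sub(e_a)$ has only two neighbours in $H_\sbb$, namely the two endpoints $q_a$ and $y_a$ of $e_a$, so any base cycle containing the edge $q_a\,\sub(e_a)$ must also contain the edge $\sub(e_a)\,y_a$; the other endpoint $y_a$ cannot be re-routed around. Nothing in the hypothesis prevents the common father $w_a$ from also being adjacent to $y_a$ (it is merely some vertex of an earlier layer adjacent to both $q_a$ and $\sub(e_a)$), and in that case the detour $q_a\,w_a\,\sub(e_a)\,y_a$ creates \emph{two} chords, $q_a\sub(e_a)$ and $w_a y_a$, not one. \Cref{claim:neighbours-Hs} explicitly cannot exclude this: it bounds the neighbours of a father in $(H\setminus R)_\sbb$ \emph{except for possibly} $\sub(e)$, $x(e)$ and $y(e)$. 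This is precisely why the paper splits into three cases: if enough fathers miss one endpoint, your argument works (Case~1); if all fathers hit both endpoints, each detour yields two chords, which handles even $k$ (Case~2) but creates a parity obstruction for odd $k$ that the paper resolves only via a substantially more delicate construction using $1$-fathers of the fathers, the path gadgets of \Cref{claim:paths-one-fathers}, and a unimodal connection (Case~3). Your proposal has no mechanism for producing an odd number of chords when every detour contributes two, so as written it cannot establish the lemma for odd $k$.
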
  
	We postpone the proof of this lemma until the end of this section.
	In the light of \cref{lem:all-x-bad}, we may assume that the particular structure described therein does not appear in the fixed copy of $H_\sbb$ in~$G_p$.
	That is, there is a `special' vertex $x$ in $H$ such that for all but at most $2k_1$ neighbours $z$ of $x$ in $H$ the following holds: for all but at most $k_1$ values of $j$, the vertex $\sub(xz)$ does not have a common $j$-father with neither $x$ nor $z$. 
	Let $Z$ be a set of $k_1$ neighbours of $x$ in $H$ that satisfy this property, i.e., for $z\in Z$, there are at most $2k_1$ indices $j$ such that $\sub(xz)$ has a common $j$-father with $x$ or $z$.
	
	Then all but at most $2k_1^2$ values of $j$ satisfy: for every $z \in Z$, the vertex $\sub(xz)$ does not have a common $j$-father with neither $x$ nor $z$.
	Let $J$ be the set of `good' indices $j$ that satisfy this property. In particular, $|J| \ge p - 2k_1^2$. The vertex $x$, set $Z \subseteq N_H(x)$ and set $J \subseteq [p]$ will be fixed throughout this section. Let $Y$ be the set of vertices in the fixed copy of $H_\sbb$ in $G_p$ that are adjacent to $x$ and some vertex in~$Z$, i.e., $Y := \{\sub(xz) : z \in Z\}$. 
	For $y\in Y$, the \emph{extended neighbour} $\ext(y)$ of $y$ is the vertex $z\in Z$ such that $y=\sub(xz)$.
	In particular, $|Z|=|Y|=k_1$.
    
    Following the definition in page 7 of \cite{bousquet-thomasse}, a collection of unimodal paths $\Q$ with the set $Y' \subseteq Y$ of endpoints is \emph{$\alpha$-good} for $\alpha>0$ if the following conditions hold:
    \begin{enumerate} [label = \rm(G\arabic*)]
        \item
            For every $y \in Y'$, there exists a unique path $Q\in\Q$ with the endpoint $y$ and moreover, $Q$ is the only path that contains an edge incident with $y$.
        \item
            For every $y \in Y'$, no vertex in any of the paths in $\Q$ contains a neighbour of the extended neighbour $\ext(y)$ of $y$ other than $y$ itself.
        \item
            For any distinct $Q, Q' \in \Q$, there is no edge between a father (in $Q$) of an endpoint in $Q$ and a father (in $Q'$) of an endpoint in $Q'$.
        \item
            For every $y \in Y'$ and every $Q \in \Q$, there are at most $\alpha|N_{H_\sbb}(\ext(y))|$ vertices in $N_{H_\sbb}(\ext(y))$ that are adjacent to a vertex in $Q$.
    \end{enumerate}

    We say that a collection of unimodal paths $\Q$ 
    is \emph{independent} if there are no edges between distinct paths in $\Q$.    
	The following is a variant of the first part of (the proof of) Lemma 9 in \cite{bousquet-thomasse}.
	\begin{lem} \label{lem:good-x}
	    There is a $\frac{1}{4k_2}$-good collection of unimodal paths $\Q$ of size at least $k_2$ in $G$.
	\end{lem}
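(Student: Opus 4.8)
The plan is to construct the required $\tfrac{1}{4k_2}$-good collection greedily, adding one unimodal path per round. In round $t$ I would pick a layer $j_t\in J$ not used before and two vertices $y_{2t-1},y_{2t}\in Y$ not used before, and let $Q_t$ be a unimodal path in layer $j_t$ joining them; at the end $Y'=\{y_1,\dots,y_{2k_2}\}$ and $Z'=\{\ext(y):y\in Y'\}$. Since $|J|\ge p-2k_1^2$ and $|Y|=k_1$ are both far larger than $k_2$, there is plenty of room for these choices, and everything rests on making them so that the collection is $\tfrac{1}{4k_2}$-good.

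The first real step is to reduce (G1)--(G4) to conditions on the $\le 2k_2$ \emph{fathers} (the path-neighbours of the endpoints). The interior of any unimodal path in a layer $j\le p$ lies in $V(G_{j-1})\setminus V(G_j)\subseteq V(G)\setminus V(G_p)$, which is disjoint from $Y$ and $V(H_\sbb)$; hence no path of $\Q$ meets $Y$ except at an endpoint, so, as the pairs are disjoint, (G1) is automatic. Next, by the defining properties of extractions together with \Cref{obs:unimodal}, the ``deep interior'' of a layer-$j$ unimodal path --- everything but the two endpoints and their path-neighbours --- sends no edges to $V(G_j)\supseteq V(G_p)$ and, when $j_s<j_t$, no edges to $V(Q_t)$; and the endpoints are harmless, since $\sub(xz)\in Y$ has only $x$ and $z$ as neighbours inside $V(H_\sbb)$, and ($H_\sbb$ being induced) $x\notin Z$, $x$ lies in no $N_{H_\sbb}(z')$, and $\sub(xz)$'s only neighbour in $Z$ is $z=\ext(\sub(xz))$ itself. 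It therefore suffices to choose the fathers so that: (i) no father lies in $N_G(z)$ for any $z\in Z'$; (ii) fathers of different paths are non-adjacent; and (iii) every father is adjacent to at most $\tfrac{1}{8k_2}\deg_H(z)$ vertices of $N_{H_\sbb}(z)$ for every $z\in Z'$ (so that, together with the endpoints, a path hits at most $\tfrac{1}{4k_2}|N_{H_\sbb}(z)|$ such vertices, which is (G4)).

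To choose good fathers I would exploit the freedom in building a unimodal path: when chasing backward neighbours, the first step from an endpoint can be any of its backward neighbours, so I can essentially prescribe the father of each endpoint. The good-layer property of $J$ then handles the ``own'' $\ext$-vertex for free: for $j_t\in J$, every $j_t$-father of $y=\sub(xz_y)$ is non-adjacent to $z_y=\ext(y)$ and to $x$, since a common $j_t$-father would contradict the definition of $J$. For the rest I would use two counting principles. First, the ``dense'' obstructions have bounded size: by \Cref{thm:kovari-sos-turan}, as $\deg_H(z)\ge\kappa$ is enormous, only boundedly many (in terms of $\ell,k_2$) vertices are adjacent to more than $\tfrac{1}{8k_2}\deg_H(z)$ vertices of $N_{H_\sbb}(z)$ --- those forbidden by (iii) --- and likewise only boundedly many vertices are adjacent to as many as $\tfrac{k_1}{100k_2}$ vertices of $Z$; so I may insist that every father avoid the former set and be adjacent to fewer than $\tfrac{k_1}{100k_2}$ vertices of $Z$ at essentially no cost, and the latter restriction keeps $\bigcup_f\{\sub(xz):z\in N_G(f)\}$ of size below $k_1/10$, which lets me always choose $Y'$ disjoint from it and so deals with (i) for the $\ext$-vertices committed \emph{later}. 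Second, each vertex of $V(G)\setminus V(G_p)$ lies in exactly one layer $V(G_{j-1})\setminus V(G_j)$, hence is an $i$-father of some vertex only when $i=j$; so each bounded forbidden set spoils only boundedly many layers, and since $|J|\ge p-2k_1^2$ is astronomically larger, a usable layer $j_t$ and usable fathers always survive.

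The hard part will be the bookkeeping behind (ii) and the portion of (i) concerning the \emph{already committed} $\ext$-vertices: one must show that forbidding the new father from being adjacent to the $O(k_2)$ previously chosen fathers and to the $O(k_2)$ previously committed $\ext$-vertices still leaves most of $J$ available, even though those neighbourhoods need not be confined to few layers. I expect this to reduce to bounding, for each $y\in Y$, the number of layers $j$ in which \emph{every} $j$-father of $y$ is bad, and to tracking carefully which vertices of $Y$ and which layers of $J$ have been used or ruled out --- an accounting argument modelled on the first part of the proof of Lemma~9 in \cite{bousquet-thomasse}.
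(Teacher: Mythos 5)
Your setup matches the paper's: both arguments pick pairs of vertices of $Y$ and distinct layers of $J$, join each pair by a unimodal path through prescribed fathers, observe that (G1) and most of (G2) are automatic from unimodality and the fact that $H_\sbb$ is induced, use the definition of $J$ to make each father non-adjacent to its own $\ext$-vertex, and use the K\H{o}v\'ari--S\'os--Tur\'an theorem to discard the few fathers that are dense into some $N_{H_\sbb}(\ext(y))$, which yields (G4). The gap is exactly where you place it: the simultaneous verification of (G3) and of the part of (G2) concerning \emph{other} endpoints' $\ext$-vertices is the heart of the lemma, and you defer it with ``I expect this to reduce to an accounting argument.'' Moreover, the reduction you sketch --- bounding, for each $y$, the number of layers $j$ in which \emph{every} $j$-father of $y$ is bad --- does not work: a single previously committed vertex $z_0\in Z'$ (or a previously chosen father) of tiny degree can be adjacent to a $j$-father of $y$ for \emph{every} $j$, for instance when $y$ has a unique $j$-father in each layer. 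This spoils all of $J$ for $y$ while creating no dense bipartite structure, so no KST-type argument excludes it; and trying instead to avoid such $y$'s makes the greedy scheme circular, since which $y$'s are usable depends on which fathers and $\ext$-vertices you have already committed, and vice versa.

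The paper escapes this with a global rather than greedy argument. It first fixes one father $f_j(y)$ for every pair $(y,j)$ and runs an iterated-shrinking argument (threshold $\sqrt{|Y_t|}$ at each step) to extract a set $Y'$ and many layers on which the chosen fathers are pairwise \emph{distinct} --- failure at this stage already produces a $K_{\ell,\ell}$. It then defines, for each surviving layer $j$, an auxiliary graph $F_j$ on $Y'$ recording every bad incidence (a father adjacent to the other endpoint, to the other $\ext$-vertex, or to the other father) and applies Ramsey's theorem to $F=\bigcup_j F_j$: a large independent set gives the desired collection outright, while a clique of size $c$ forces some single $F_j$ to contain at least $\frac{1}{k_2}\binom{c}{2}$ edges, whence the at most $3c$ vertices $\{y,\ext(y),f_j(y)\}$ span a graph of positive edge density and KST yields a $K_{\ell,\ell}$ (this is where the distinctness of the fathers is used). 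Without a substitute for this Ramsey-plus-pigeonhole step, your construction cannot be completed as written.
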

    \begin{proof}
			Let $k', k'', c$ be such that $k_1 \gg k' \gg k'' \gg c \gg k_2$.
            For each $y \in Y$ and $j \in J$, choose a $j$-father of $y$ and denote it by $f_j(y)$. 
            We wish to find `large' subsets $Y' \subseteq Y$ and $J' \subseteq J$ such that, for every fixed $j$, the fathers $f_j(y)$ are distinct for $y \in Y'$.
            To this end, we construct sequences $Y = Y_0 \supseteq \ldots \supseteq Y_\ell$ and $j_1, \ldots, j_\ell \in J$ recursively as follows: 
            given $Y_0, \ldots, Y_t$ and $j_1, \ldots, j_t$ for $t < \ell$, if there exists $j \in J \setminus \{j_1, \ldots, j_t\}$ and a subset $Y' \subseteq Y_t$ of size at least $\sqrt{|Y_t|}$ such that $f_j(y)$ is the same for all $y \in Y'$, define $j_{t+1} = j$ and $Y_{t+1} = Y'$. Otherwise, stop the process.
            
			Suppose first that $j_1, \ldots, j_\ell$ and $Y_\ell$ are well-defined through the recursive process. Then $f_j(y)$ is the same for all $y \in Y_{\ell}$, for every $j \in \{j_1, \ldots, j_{\ell}\}$. Then $\{f_j(y) : j \in \{j_1, \ldots, j_{\ell}\}\}$ is a set of size $\ell$ that is fully joined to $Y_{\ell}$. 
			Since $|Y_{\ell}| \ge |Y|^{2^{-\ell}}=(k_1)^{2^{-\ell}} \ge \ell$, there is a copy of $K_{\ell, \ell}$ in $G$, contradicting the assumption that $G$ is $K_{\ell, \ell}$-free.
            
			Now suppose that the process stops at the $t$-th step for some $t<\ell$ with the outputs $j_1, \ldots, j_t$ and~$Y_t$. 
			Let $J' := J \setminus \{j_1, \ldots, j_t\}$.
			Then for every $j \in J'$, each element in the multiset $\{f_j(y) : y \in Y_t\}$ repeats at most $\sqrt{|Y_t|}$ times. 
			Since $|Y_t| \ge |Y|^{2^{-\ell}}=(k_1)^{2^{-\ell}} \ge (k'')^2$, for every $j \in J'$ there is a subset $Y_j' \subseteq Y_t$ of size $k''$ such that $f_j(y)$ are distinct for all $y \in Y_j'$. 
			Let $Y' \subseteq Y_t$ be the most popular choice for $Y_j'$. By averaging, $Y_j' = Y'$ for at least $|J'| / \binom{|Y_t|}{\sqrt{|Y_t|}} \ge (p - k_1^2 - \ell)/2^{k_1} \ge k'$ indices $j \in J'$; let $J'' \subseteq J'$ be a set of size $k'$ such that $Y_j' = Y'$ for every $j \in J''$.

			Let $W := \{f_j(y) : y \in Y', j \in J''\}$. We claim that for every $y \in Y'$ there are at most $k''$ elements $w \in W$ such that $w$ sends at least $\frac{1}{4k_2}|N_{H_\sbb}(\ext(y))|$ edges to $N_{H_\sbb}(\ext(y))$. 
			Suppose to the contrary that this is not the case for $y \in Y'$. Write $N := N_{H_\sbb}(\ext(y))$ for brevity and let $W'$ be the set of vertices $w\in W$ such that $w$ sends at least $\frac{1}{4k_2}|N|$ edges to $N$ satisfying $|W'| \ge k''$. 
			By \Cref{thm:kovari-sos-turan}, $G[N, W']$\footnote{Here $G[A,B]$ denotes the \emph{bipartite} induced subgraph, i.e., we only take those edges that cross between $A$ and $B$.} contains a copy of $K_{\ell, \ell}$, a contradiction. 
			Consider the set 
			\begin{equation*}
				\left\{j \in J'' : \text{ for all $z,y\in Y'$, $f_j(z)$ sends at most $\frac{1}{4k_2}|N_{H_\sbb}(\ext(y))|$ edges to $N_{H_\sbb}(\ext(y))$}\right\}.
			\end{equation*}
			By the above statement, this set has size at least $|J''| - |Y'| \cdot k'' \ge k' - (k'')^2 \ge k_2$. Let $J_{\final}$ be a subset of the above set of size $k_2$.

			For every $j \in J_{\final}$, define a graph $F_j$ on vertices $Y'$ whose edges are pairs $y_1 y_2$ where $y_1, y_2 \in Y'$ are distinct and one of the following holds: $f_j(y_i)$ is adjacent to $y_{3-i}$ for some $i \in [2]$; $f_j(y_i)$ is adjacent to $\ext(y_{3-i})$; or $f_j(y_1)$ and $f_j(y_2)$ are adjacent. By Ramsey's theorem, the graph $F = \bigcup_{j \in J_{\final}}F_j$ contains either an independent set of size $2k_2$ or a clique of size $c$. 
			Assume the latter case and suppose that $U$ is a subset of $Y'$ of size $c$ that induces a clique. Then for some $j \in J_{\final}$ the graph $F_j$ has at least $\frac{1}{k_2}\binom{|U|}{2}$ edges. Let $U' := U \cup \{\ext(y) : y \in U\} \cup \{f_j(y) : y \in U\}$. Then, by definition of $F_j$, 
			\begin{equation*}
				e(G[U']) \ge e(F_j) \ge \frac{1}{k_2}\binom{|U|}{2} \ge \frac{1}{k_2} \binom{|U'|/3}{2}.
			\end{equation*}
			By \Cref{thm:kovari-sos-turan}, $G[U']$ contains a copy of $K_{\ell, \ell}$, a contradiction.

			It remains to consider the case where $F$ contains an independent set $U$ of size $2k_2$. Write $U = \{y_1, \ldots, y_{2k_2}\}$ and $J_{\final} = \{j_1, \ldots, j_{k_2}\}$. Let $Q_i$ be a unimodal path between $f_{j_i}(y_{2i-1})$ and $f_{j_i}(y_{2i})$, for $i \in [k_2]$. By construction, one may  easily check that $\{Q_1, \ldots, Q_{k_2}\}$ is a $\frac{1}{4k_2}$-good collection of size at least $k_2$.
        \end{proof}

	Recall that a \emph{$k$-wheel} in a graph $F$ is an induced cycle $C$ along with an additional vertex that has at least $k$ neighbours in $C$. We note that a $(k+2)$-wheel contains a cycle with exactly $k$ chords. Indeed, suppose that $C$ is an induced cycle and $v$ is a vertex (not in $C$) with at least $k+2$ neighbours in $C$. Let $u_0, \ldots, u_{k+1}$ be $k+2$ consecutive neighbours of $u$ in $C$. Let $P$ be the subpath in $C$ that starts at $u_0$, ends in $u_{k+1}$ and contains the vertices $u_1, \ldots, u_k$, and let $C'$ be the cycle obtained by concatenating $P$ and the path $u_{k+1} v u_0$. Then $C'$ is a cycle with exactly $k$ chords; its chords are $vu_1, \ldots, vu_k$. 
	
	The following lemma obtains a collection of independent good paths from a collection of good paths. We use this as a black box.
	\begin{lem}[Lemma 17 in \cite{bousquet-thomasse}] \label{lem:make-Q-independent}
        Let $\Q$ be an $\alpha$-good collection of unimodal paths of size $k_2$.
        If $G$ is $k$-wheel-free, then
        there exists an independent $2\alpha$-good collection of unimodal paths of size at least~$k$.
	\end{lem}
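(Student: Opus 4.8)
The plan is to follow the strategy of Bousquet and Thomassé (the statement is essentially their Lemma~17). First, the factor $2\alpha$ in the conclusion is a formality: any sub-collection of an $\alpha$-good collection is again $\alpha$-good, since each of (G1)--(G4) only becomes weaker when paths (and the corresponding endpoints in $Y'$) are deleted. So it suffices to find a sub-collection $\Q'\subseteq\Q$ with $|\Q'|\ge k$ that is \emph{independent}, i.e.\ has no edge between distinct members. I would form the conflict graph $\C$ on vertex set $\Q$, joining two paths whenever $G$ has an edge between them, and try to show $\C$ has an independent set of size $k$; suppose it does not.

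Next, locate the conflicts. Order the paths by the (distinct) layers they occupy. If $Q$ is lower than $Q'$, then by \Cref{obs:unimodal} any edge from $V(Q)$ to $V(Q')$ has its $Q$-end at an endpoint of $Q$ or at a neighbour in $Q$ of such an endpoint (a \emph{father} of $Q$), and by (G3) no edge joins a father of $Q$ to a father of $Q'$; so each path meets higher paths only through the bounded set of \emph{anchor} vertices formed by its two endpoints and their fathers. Since the layers are distinct, each father lies on exactly one path of $\Q$, and, by the choice of the index set $J$, each father is non-adjacent to $x$. Hence, for each $Q_i\in\Q$, the vertex $x$ together with $Q_i$ spans an \emph{induced} cycle $C_i$: $Q_i$ is induced, its two endpoints are distinct subdivision vertices at $x$ and so non-adjacent, and $x$'s only neighbours on $V(Q_i)$ are those two endpoints (the other interior vertices send no edge to $V(G_p)\ni x$, and the fathers are non-adjacent to $x$). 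Consequently, if some anchor vertex $a$ had $k$ neighbours on $V(Q_i)$ for a path $Q_i$ not containing $a$, then $a$ would have $k$ neighbours on the induced cycle $C_i$ --- a $k$-wheel, contradicting the hypothesis. So I may assume every anchor vertex has at most $k-1$ neighbours on each path of $\Q$ not containing it.

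The last step, which I expect to be the real obstacle, is to produce a $k$-wheel from what remains. As $\C$ has no independent set of size $k$ and $|\Q|=k_2\gg k$, some path conflicts with many others, and by the structure above this can be blamed on one anchor vertex $f$ with a neighbour on each of a large family of paths $Q_{a_1},Q_{a_2},\dots$, none containing $f$. One then wants to stitch $k$ of these paths --- each through a chosen neighbour of $f$ --- into a single induced cycle $D$ with $f\notin D$, inserting $f$ between two of the pieces and closing $D$ up inside $H_\sbb$, using the high connectivity of $H$ (via \Cref{thm:linkage}) to route between the extended neighbours $\ext(\cdot)\in Z\subseteq N_H(x)$ of the endpoints involved while keeping $D$ induced and avoiding $x$. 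Then $f$ has at least $k$ neighbours on the induced cycle $D$, so $(D,f)$ is the desired $k$-wheel. The crux is this construction: one must exclude every unwanted chord of $D$, which is exactly what conditions (G1)--(G4) and the layer restrictions in \Cref{obs:unimodal} are designed for; executing it carefully is the technical heart of Bousquet and Thomassé's proof, and this is why we invoke their lemma as a black box.
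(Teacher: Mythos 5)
There is nothing in the paper to compare your sketch against: the paper does not prove \Cref{lem:make-Q-independent} at all, but imports it verbatim as Lemma~17 of \cite{bousquet-thomasse} and explicitly uses it ``as a black box.'' Your proposal, which outlines a strategy and then defers ``the technical heart'' to Bousquet--Thomass\'e, is therefore consistent in spirit with what the paper does; but judged as a proof it is incomplete, and one of its steps is suspect.

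The suspect step is the opening reduction. You claim it suffices to extract an independent \emph{sub-collection} of size $k$, on the grounds that sub-collections inherit $\alpha$-goodness. If that reduction were the whole story, the conclusion of the lemma would be an $\alpha$-good collection, and the degradation from $\alpha$ to $2\alpha$ in the statement would be vacuous. That factor of $2$ is a strong hint that the actual argument does not merely select paths but modifies or concatenates them (which is what weakens condition (G4)); correspondingly, your dichotomy ``either the conflict graph has an independent set of size $k$, or some single anchor vertex conflicts with many paths, yielding a $k$-wheel'' is not obviously exhaustive once one accounts for the several anchor vertices per path and the several ways an edge can join two paths (endpoint-to-endpoint, endpoint-to-father, endpoint-to-interior-neighbour), only some of which are excluded by (G1)--(G3). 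Beyond that, the final and decisive step --- stitching $k$ of the conflicting paths into a single \emph{induced} cycle through $H_\sbb$ that the offending vertex sees $k$ times --- is exactly the content of the lemma and is left entirely unexecuted in your write-up (you say so yourself). So the honest verdict is: your sketch is a plausible reconstruction of the intended argument, but it is not a proof; the only complete justification available, both for you and for the paper, is the citation to \cite{bousquet-thomasse}.
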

	
	The following lemma, together with the lemmas above, will complete the proof of \Cref{thm:bt}.
	
	\begin{lem}[Lemma 12 in \cite{bousquet-thomasse}] \label{lem:find-k-wheel}
		If there is a collection of $\ceil{k/2}$ independent $(1/2k)$-good unimodal paths with endpoints in $N_{H_\sbb}(x)$ for a vertex $x$ in $H$, then the graph $G$ has a $k$-wheel.
	\end{lem}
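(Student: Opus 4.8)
The plan is to produce a $k$-wheel whose hub is the vertex $x$. Write $m := \lceil k/2 \rceil$ and let $Q_1, \dots, Q_m$ be the given unimodal paths, where $Q_i$ has endpoints $y_{2i-1}, y_{2i} \in N_{H_\sbb}(x)$. By (G1) the $2m$ endpoints $y_1, \dots, y_{2m}$ are pairwise distinct, and since $\ext$ is injective the vertices $z_j := \ext(y_j)$ are $2m$ distinct $H$-neighbours of $x$ with $y_j = \sub(x z_j)$; as the copy of $H_\sbb$ is induced, each $y_j$ is adjacent in $G$ to both $x$ and $z_j$. Since $2m \ge k$, it suffices to build an induced cycle $C$ with $x \notin V(C)$ passing through all of $y_1, \dots, y_{2m}$, because then $(C, x)$ is a $k$-wheel.

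I would obtain $C$ by concatenating $Q_1, \dots, Q_m$ with $m$ pairwise internally disjoint connector paths $R_1, \dots, R_m$, where $R_i$ joins $y_{2i}$ to $y_{2i+1}$ (subscripts read mod $2m$) and has the shape $R_i = y_{2i}\,z_{2i}\,\widehat{S}_i\,z_{2i+1}\,y_{2i+1}$, with $y_{2i}z_{2i}$ and $z_{2i+1}y_{2i+1}$ edges of $H_\sbb$ and $\widehat{S}_i$ the $1$-subdivision of a path $S_i$ in $H$ from $z_{2i}$ to $z_{2i+1}$. To produce all the $S_i$ at once I would use that $H$ is $\kappa$-connected with $\kappa \gg k$, so that $H$ has minimum degree at least $\kappa$: first, for each $l$, pick an $H$-neighbour $z_l^{\ast}$ of $z_l$, all distinct and avoiding $\{x, z_1, \dots, z_{2m}\}$, such that the subdivision vertex $\sub(z_l z_l^{\ast})$ is non-adjacent to every $Q_j$ --- this is possible because, by (G4) with $\alpha = \frac{1}{2k}$ and since there are only $m \le k$ paths, at most half of $N_{H_\sbb}(z_l)$ is adjacent to a vertex on some $Q_j$, leaving at least $\kappa/2$ admissible choices for $z_l^{\ast}$; then link the pairs $(z_2^{\ast}, z_3^{\ast}), (z_4^{\ast}, z_5^{\ast}), \dots, (z_{2m}^{\ast}, z_1^{\ast})$ by paths with pairwise disjoint interiors via the Thomas--Wollan linkage theorem (\Cref{thm:linkage}), applied inside an induced subgraph of $H$ obtained by deleting $x$ and finitely many further vertices, and prepend/append the edges $z_l z_l^{\ast}$. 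Taking $1$-subdivisions gives the $\widehat{S}_i$ and hence the $R_i$; by construction these are pairwise vertex-disjoint, avoid $x$, and each $R_i$ is chordless in $H_\sbb$ (the $1$-subdivision of \emph{any} path in $H$ is an induced path in $H_\sbb$), hence chordless in $G$.

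It then remains to verify that $C$ has no chord. A chord inside a single $Q_i$ cannot exist, since $y_{2i-1}$ and $y_{2i}$ are non-adjacent subdivision vertices, making $Q_i$ induced; a chord inside a single $R_i$ cannot exist, as just noted; a chord between distinct $Q_i, Q_j$ is excluded by the independence of $\Q$; and a chord between distinct $R_i, R_j$ is excluded by the vertex-disjointness of the underlying $H$-paths together with $H_\sbb$ being induced. Furthermore, the vertices of $V(C) \cap V(H_\sbb)$ --- namely the $y_j$, the $z_j$ and the interiors of the $R_i$ --- induce a chordless subgraph of $H_\sbb$: here (G2) is precisely what guarantees that $z_j$ has no neighbour on any path of $\Q$ other than $y_j$. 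Hence any remaining potential chord of $C$ must involve an interior vertex of some $Q_j$, and, by the unimodal property, the only interior vertices of $Q_j$ having a neighbour in $V(G_p) \supseteq V(H_\sbb)$ are the (at most two) \emph{fathers} of the endpoints of $Q_j$. The whole check thus reduces to showing that no father of an endpoint of any $Q_j$ is adjacent to an interior vertex of any connector $R_i$ --- and it is precisely this requirement that dictates which `further vertices' must be deleted from $H$ before applying \Cref{thm:linkage} above.

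I expect this last reduction to be the main obstacle, and I would handle it exactly as in the proof of Lemma~12 of \cite{bousquet-thomasse}: combine (G2), (G3) and, crucially, (G4), which --- summed over the $m \le k$ paths --- shows that the union of the vertex sets of the $Q_j$ misses at least half of every neighbourhood $N_{H_\sbb}(\ext(y))$; this leaves enough room, when choosing the $z_l^{\ast}$ and routing the linkage paths $S_i$, to keep the connectors out of the closed neighbourhoods of the at most $2m$ fathers while the relevant subgraph of $H$ remains much more than $10m$-connected, so that \Cref{thm:linkage} still applies. Granting this, $C$ is an induced cycle avoiding $x$ and containing $y_1, \dots, y_{2m}$, so $(C, x)$ is a $k$-wheel, as required. (No number theory is needed here; this step is pure connectivity bookkeeping.)
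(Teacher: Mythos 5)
The first thing to say is that the paper does not prove this statement at all: it is imported verbatim as ``Lemma 12 in \cite{bousquet-thomasse}'' and used as a black box (exactly like \Cref{lem:make-Q-independent}), so there is no in-paper proof to compare against. Your reconstruction has the right architecture, and it is almost certainly the architecture of the original proof: the hub is $x$, the rim is the concatenation of the $\lceil k/2\rceil$ unimodal paths with connectors of the form $y_{2i}\,z_{2i}\,\widehat S_i\,z_{2i+1}\,y_{2i+1}$ running through $H_\sbb$, the subdivided linkage paths are obtained from \Cref{thm:linkage}, and (G1)--(G3) together with independence do exactly the work you assign them in the chord check. Your verification that each $R_i$ is induced, that distinct $R_i,R_j$ have no cross-edges (granting full vertex-disjointness of the underlying $H$-paths, which needs a word since \Cref{thm:linkage} only promises disjoint interiors), and that the only dangerous interior vertices of the $Q_j$ are the fathers of their endpoints, are all correct.

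The genuine gap is the step you yourself flag as ``pure connectivity bookkeeping'': keeping the connectors out of the neighbourhoods of the $2\lceil k/2\rceil$ fathers. Condition (G4) controls adjacencies of the paths in $\Q$ only \emph{into the specific sets} $N_{H_\sbb}(\ext(y))$; it is exactly what you need to choose the first subdivision vertex $\sub(z_l z_l^{\ast})$ of each connector, but it says nothing about the rest of $H_\sbb$. A priori a single father could be adjacent to almost every branch vertex of $H$, in which case no amount of deletion before applying \Cref{thm:linkage} routes the linkage paths around it, and the cycle you build fails to be induced. The missing ingredient is a dichotomy of the kind the paper proves in \Cref{claim:neighbours-Hs}: either each father $f$ admits a set $R(f)$ of $O(k)$ branch vertices of $H$ such that $f$ has no neighbour in $(H\setminus R(f))_\sbb$ --- and then one deletes $\bigcup_f R(f)$ (size $O(k^2)\ll\kappa$) together with $x$, the $z_l$ and the terminals before invoking \Cref{thm:linkage} --- or some father has $k+2$ neighbours spread over $H_\sbb$ even after such deletions, in which case the linkage theorem produces an induced cycle in $H_\sbb$ containing $k+2$ of them and one already has a $k$-wheel with that father as hub. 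Without stating this alternative, the routing step cannot be completed from (G2)--(G4) alone, so as written the argument is incomplete at precisely the point where all the difficulty is concentrated.
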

	\begin{proof}[Proof of~\cref{thm:bt}]
	 Recall that, while assuming~\Cref{lem:all-x-bad},  the parameters are chosen according to the hierarchy $\chi \gg p \gg \kappa \gg k_1 \gg k_2 \gg k, \ell, \omega$.
	 By \cref{lem:good-x}, there is a collection $\Q$ that consists of $k_2$ paths that are $\frac{1}{4k_2}$-good.
	 Then by~\cref{lem:make-Q-independent}, there is a collection of $k_2$ independent $\frac{1}{2k_2}$-good paths; however,~\cref{lem:find-k-wheel} then finds a $k_2$-wheel in $G$, and hence, a cycle with $k$ chords.
	\end{proof}

    \subsection{Dealing with triangles} \label{subsec:triangles}
    
    It remains to prove~\cref{lem:all-x-bad}. The following lemma will be useful in the proof.
		
		\begin{lem} \label{claim:neighbours-Hs}
			Suppose that $G$ does not have a cycle with exactly $k$ chords. 
			Then for every vertex $u$ in $G$, there exists a subset $R\subseteq V(H)$ of at most $3(k+1)$ vertices such that~$u$ does not have any edges to the copy of $(H \setminus R)_\sbb$ in $G$.
		\end{lem}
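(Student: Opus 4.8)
The plan is to prove the contrapositive: fixing a vertex $u$ of $G$, I assume that no $R \subseteq V(H)$ with $|R| \le 3(k+1)$ has the stated property, and I produce a cycle of $G$ with exactly $k$ chords. First a reduction. If $u$ lies on the fixed copy of $H_\sbb$ in $G$ we are done for trivial reasons: if $u$ is the copy of a vertex $v \in V(H)$, then $R = \{v\}$ works, since (as $H_\sbb$ is induced in $G$) the only neighbours of $u$ inside $V(H_\sbb)$ are the vertices $\sub(e)$ with $e \ni v$, all of which disappear in $(H \setminus R)_\sbb$; and if $u = \sub(e)$ with $e = \{a,b\}$, then $R = \{a,b\}$ works. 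So assume $u \notin V(H_\sbb)$ and let $N := N_G(u) \cap V(H_\sbb)$. Partition $N$ through $H$: set $X := N \cap V(H)$, and let $Y$ be the set of edges $e$ of $H$ with $\sub(e) \in N$, viewed as a graph on vertex set $V(H)$. Observe that whenever $R \subseteq V(H)$ satisfies $X \subseteq R$ and $R$ is a vertex cover of $Y$, the vertex $u$ has no edge to $(H \setminus R)_\sbb$: a surviving original vertex avoids $X$, and a surviving $\sub(e)$ has $e \cap R = \emptyset$, hence $e \notin Y$. Applying this with $R := X \cup C$, where $C$ is a minimum vertex cover of $Y$, and using that the vertex set of a maximum matching of $Y$ is a vertex cover (so $|C| \le 2\nu(Y)$, where $\nu(Y)$ is the matching number), we conclude: if $|X| \le k+1$ and $\nu(Y) \le k+1$, then $|R| \le (k+1) + 2(k+1) = 3(k+1)$, a contradiction. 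Hence $|X| \ge k+2$ or $\nu(Y) \ge k+2$.

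Next I construct an induced path of $H_\sbb$ passing through many vertices of $N$. Recall $H$ is $\kappa$-connected with $\kappa \gg k$. If $|X| \ge k+2$, choose distinct $v_0, \dots, v_{k+1} \in X$; by the classical theorem of Dirac, $H$ has a cycle through $v_0, \dots, v_{k+1}$, and deleting one edge yields a path $Q$ of $H$ (with distinct vertices) through all of them. If instead $\nu(Y) \ge k+2$, choose a matching $m_0, \dots, m_{k+1}$ in $Y$ with $2(k+2)$ distinct endpoints; by a routine use of the high connectivity of $H$ (via Menger's theorem, or via the linkage theorem \Cref{thm:linkage} after suitable preprocessing), $H$ has a cycle through all the edges $m_0, \dots, m_{k+1}$, and deleting an edge that is not one of the $m_j$ yields a path $Q$ of $H$ containing all these edges. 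In either case write $Q = w_0 w_1 \cdots w_\ell$ and let $\tilde Q$ be the path $w_0, \sub(w_0w_1), w_1, \sub(w_1w_2), w_2, \dots, \sub(w_{\ell-1}w_\ell), w_\ell$ of $H_\sbb$. The key point is that $\tilde Q$ is an \emph{induced} path of $H_\sbb$: two original vertices are never adjacent in $H_\sbb$, two subdivision vertices are never adjacent, each $\sub(w_{i-1}w_i)$ is adjacent in $H_\sbb$ only to $w_{i-1}$ and $w_i$, and the subdivision vertex of any edge $w_iw_j$ of $H$ with $|i-j| \ge 2$ simply does not lie on $\tilde Q$ — so chords of $\{w_0, \dots, w_\ell\}$ in $H$ are harmless. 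Since $H_\sbb$ is induced in $G$, $\tilde Q$ is an induced path of $G$, and by construction it passes through at least $k+2$ vertices of $N$: the $v_j$ in the first case, the $\sub(m_j)$ in the second.

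Finally I extract the cycle. List the vertices of $V(\tilde Q) \cap N$ in the order $p_0, p_1, \dots, p_s$ in which they appear along $\tilde Q$, so $s \ge k+1$; let $\tilde Q'$ be the sub-path of $\tilde Q$ from $p_0$ to $p_{k+1}$, which is again induced and whose only vertices in $N$ are $p_0, \dots, p_{k+1}$, of which exactly $p_1, \dots, p_k$ are internal. Let $C$ be the cycle formed by $\tilde Q'$ together with the edges $up_0$ and $up_{k+1}$; this is a genuine cycle of $G$ since $u \notin V(H_\sbb)$, which contains $V(\tilde Q')$, and $|V(\tilde Q')| \ge k+2 \ge 3$. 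Counting chords as $e(G[V(C)]) - |V(C)|$: the edges inside $V(\tilde Q') \subseteq V(H_\sbb)$ are exactly the $|V(\tilde Q')| - 1$ edges of the induced path $\tilde Q'$, while $u$ sends exactly the $k+2$ edges $up_0, \dots, up_{k+1}$ into $V(\tilde Q')$, so the number of chords is $(|V(\tilde Q')| - 1 + k + 2) - (|V(\tilde Q')| + 1) = k$. This contradicts the assumption that $G$ has no cycle with exactly $k$ chords.

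The step I expect to be the main obstacle is producing, in the second case, a cycle of $H$ through the prescribed edges $m_0, \dots, m_{k+1}$. The difficulty is that although $H$ itself is highly connected, its $1$-subdivision $H_\sbb$ is only $2$-connected, so one cannot route inside $H_\sbb$ directly; the escape is the observation above that \emph{any} path of $H$ subdivides to an induced path of $H_\sbb$, which lets one route in $H$ instead — but then placing the degree-$2$ vertices $\sub(m_j)$ onto the path forces the use of a ``cycle through prescribed edges'' statement, a somewhat delicate (though standard) strengthening of Dirac's theorem rather than an off-the-shelf fact, proved via high connectivity. By contrast, once the induced path $\tilde Q$ is in hand, extracting exactly $k$ chords is straightforward.
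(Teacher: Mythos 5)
Your proposal is correct and follows essentially the same route as the paper: split $N_G(u)\cap V(H_\sbb)$ into branch-vertex neighbours and a matching of edges whose subdivision vertices are neighbours, bound each part by $k+1$ using the high connectivity of $H$ to route an induced cycle through $k+2$ neighbours of $u$ otherwise, and take $R$ to be the union of the vertex part with the vertex set of the matching. The one step you flag as delicate (a cycle of $H$ through the prescribed matching edges) is exactly what the paper gets from the Thomas--Wollan linkage theorem (\Cref{thm:linkage}), which you already name as a fallback, by linking $y_i$ to $x_{i+1}$ cyclically; the rest of your argument (subdivided paths are induced in $H_\sbb$, and the explicit chord count giving exactly $k$) matches the paper's use of the $(k+2)$-wheel observation.
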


		\begin{proof}
			Let $X$ be the set of vertices in $H$ that are neighbours of $u$ in $G$. We claim that $|X| \le k+1$. Suppose to the contrary that $x_1, \ldots, x_{k+2}$ are distinct vertices in $X$. 
			As $H$ is $\kappa$-connected with $\kappa \gg k$, \Cref{thm:linkage} guarantees that there exist pairwise internally vertex-disjoint paths $Q_1, \ldots, Q_{k+2}$ such that each $Q_i$ ends at $x_i$ and $x_{i+1}$, where the addition of indices taken modulo $k+2$.
			Let $Q_i'$ be the path in the copy of $H_\sbb$ in $G$ that corresponds to the 1-subdivision of $Q_i$.
			Then by concatenating the paths $Q_1', \ldots , Q_{k+2}'$, we obtain an induced cycle in $G$ that contains at least $k+2$ neighbours of $u$. 
			Then there exists a cycle with exactly $k$ chords, which contradicts the assumption.

			Let $M$ be a maximum matching in $H$ such that, for each edge $e$ in $M$, $\sub(e)$ is adjacent to $u$ in~$G$. 
			Our next claim is $|M| \le k+1$. Suppose to the contrary that $x_1 y_1, \ldots, x_{k+2} y_{k+2}$ are vertex-disjoint edges in $M$. 
			By using~\cref{thm:linkage} as before, there exist vertex-disjoint paths $Q_1, \ldots, Q_k$ in $H \setminus \{x_1 y_1, \ldots, x_{k+2} y_{k+2}\}$ such that each $Q_i$ ends at $y_i$ and $x_{i+1}$.
			Again, let $Q_i'$ be the 1-subdivision of $Q_i$ in the copy of $H_\sbb$ in $G$.
			Then $x_1 y_1 Q_1' \ldots x_{k+2} y_{k+2} Q_{k+2}'x_1$ is an induced cycle in $G$ that contains at least $k+2$ neighbours of $u$, which again yields a contradiction.

			Take $R = X \cup V(M)$. Then $|R| \le 3(k+1)$ and $u$ has no neighbours in $(H \setminus R)_\sbb$, as required.
		\end{proof}

		\begin{proof}[Proof of \Cref{lem:all-x-bad}]
		    Let $k_3, k_4$ be such that 
			Let $k_1 \gg k_2 \gg k_3 \gg k_4 \gg \omega \gg k$.
			Say that an edge $e = xy$ of $H$ is \emph{triangulated} if $\sub(e)$ and at least one of $x$ and $y$ have a common $j$-father, for at least $k_1$ indices $j$.
			Then every vertex in $H$ is incident with at least $2k_1$ \triangulated{} edges and hence, there is a matching $M_1$ in~$H$ that consists of $k_1$ \triangulated{} edges.
			
			For each $e \in M_1$ let $x(e)$ and $y(e)$ be the two ends of $e$, let $f(e)$ be a $j(e)$-father of $\sub(e)$ which is adjacent to $x(e)$ or $y(e)$, so that the indices $j(e)$ are all distinct for $e \in M_1$. Let $U$ be the set $\{f(e) : e \in M_1\}$; then $|U|=k_1$. Since $k_1 \gg k_2, \omega$ and $G$ is $K_{\omega+1}$-free, the set $U$ contains an independent set of size $k_2$. Let $M_2$ be a submatching of $M_1$ of size $k_2$ such that $\{f(e) : e \in M_2\}$ is independent. 

			Let $F$ be the auxiliary graph whose vertices are the edges of $M_2$, where $e_1 e_2$ is an edge in $F$ whenever $f(e_{3-i})$ is adjacent to at least one of $\sub(e_i), x(e_i)$ and $y(e_i)$ for some $i \in [2]$. 
			By \Cref{claim:neighbours-Hs}, the vertex $\sub(e)$ is adjacent to at most $3(k+1)$ vertices in the copy of $H_\sbb$ in $G$, for every $e \in M_2$. 
			In particular, every $e\in M_2$ has neighbours in at most $3(k+1)$ of the sets $\{\sub(e'), x(e'), y(e')\}$ with $e' \in M_2$.
			Thus, $F$ has at most $3(k+1) |F|$ edges. Tur\'an's theorem then implies that $F$ contains an independent set of size at least $|F|/6(k+1) \ge k_3$. Let $M_3$ be a submatching of $M_2$ of size $k_3$ that forms an independent set in $F$.
			That is, for every distinct $e, e'\in M_3$, $f(e)$ is \emph{not} adjacent to any $\sub(e')$, $x(e')$ or $y(e')$.

			For each $e \in M_3$, let $R(e)$ be a set of vertices in $H$ of size at most $3(k+1)$ such that $x(e'), y(e') \notin R(e)$ for all $e' \in M_3$ and $f(e)$ has no neighbours in $(H \setminus R(e))_\sbb$ except for possibly $\sub(e)$, $x(e)$ and $y(e)$; the existence of such a set is guaranteed by the choice of $M_3$ and by \Cref{claim:neighbours-Hs}.

			We consider three cases. Throughout the case analysis, we abuse notation by writing $e$ for the vertex $\sub(e)$ for simplicity.
			Whenever we consider indexed edges $e_1,e_2,\ldots$, we shall denote $x_i := x(e_i)$, $y_i := y(e_i)$, $f_i := f(e_i)$ and $R_i := R(e_i)$.
			The 1-subdivision of a path $Q_i$ in $H$ is denoted by~$Q_i'$, and is again a path in the copy of $H_\sbb$.
			
			\noindent\textbf{Case 1:} there are $k$ edges $e\in M_3$ such that $f(e)$ is not adjacent to  $y(e)$.

				Let $e_1, \ldots, e_k \in M_3$ be distinct edges such that $f_i$ is not adjacent to $y_i$ for $i \in [k]$ (by choice of $M_3$ this means that $f_i$ is adjacent to $x_i$ for $i \in [k]$). 
				Let $R := R_1 \cup \ldots \cup R_k$. As $H$ is $\kappa$-connected, $H \setminus R$ is $10k$-connected and thus, by \Cref{thm:linkage}, there exist pairwise vertex-disjoint paths $Q_1, \ldots, Q_k$ in $H \setminus R$ such that $Q_i$ has ends $y_i$ and $x_{i+1}$ for $i \in [k]$, where the addition of indices is taken modulo $k$.
				Let $C$ be the cycle $x_1 f_1 e_1 y_1 Q_1' \ldots x_k f_k e_k Q_k'x_1$. Then~$C$ has exactly $k$ chords, namely: $x_1 e_1$, \ldots, $x_k e_k$.

				\begin{figure}[ht]
					\centering
					\includegraphics[scale = 1]{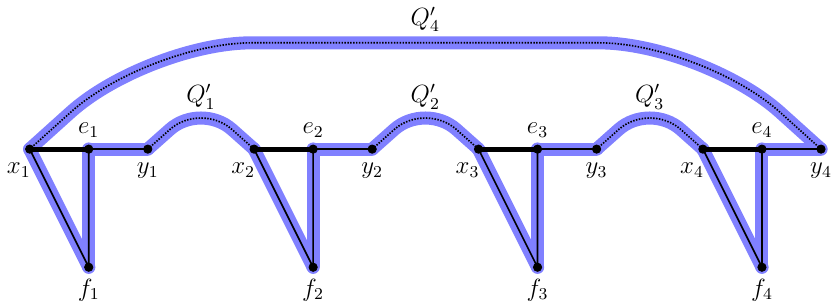}
					\vspace{-.4cm}
					\caption{Case 1: $f_i$ not adjacent to $y_i$}
					\label{fig:case-1}
					\vspace{-.2cm}
				\end{figure}
            
            \medskip
			\noindent\textbf{Case 2:} $k$ is even and $f(e)$ is adjacent to $x(e)$ and $y(e)$ for at least $k/2$ values of $e$ in $M_3$.

				Write $k = 2s$.
				Let $e_1, \ldots, e_{s} \in M_3$ be distinct edges such that $f_i$ is adjacent to $x_i$ and $y_i$ for $i \in [s]$. Let $R := R_1 \cup \ldots \cup R_s$. 
				As before, there exist pairwise vertex-disjoint paths $Q_1, \ldots, Q_{s}$ in $H \setminus R$ such that $Q_i$ has ends $y_i$ and $x_{i+1}$ for $i \in [s]$, where the addition of indices is taken modulo $s$. 
				Then $x_1 f_1 e_1 y_1 Q_1' \ldots x_{s} f_{s} e_{s} y_{s} Q_{s}'x_1$ is a cycle in $G$ with exactly $k$ chords: $x_1 e_1, \ldots, x_{s}e_{s}$ and $f_1 y_1, \ldots, f_{s} e_{s}$. 

				\begin{figure}[ht]
					\centering
					\includegraphics[scale = 1]{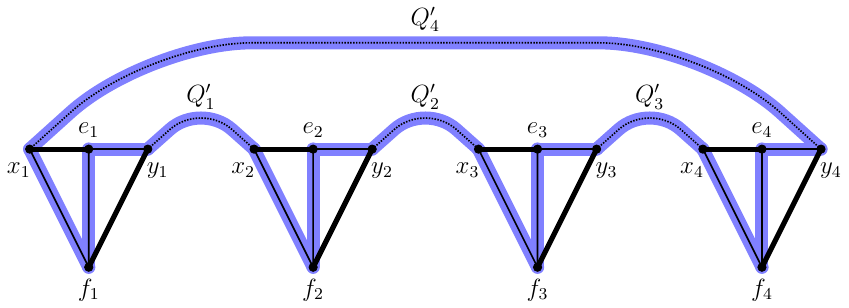}
					\vspace{-.4cm}
					\caption{Case 2: $k$ even $f_i$ adjacent to $x_i$ and $y_i$}
					\label{fig:case-2}
					\vspace{-.2cm}
				\end{figure}
            
            \medskip
			\noindent\textbf{Case 3:} $k$ is odd and $f(e)$ is adjacent to $x(e)$ and $y(e)$ for at least $k_4 + 1$ values of $e$ in $M_3$.

				Note that if $k = 1$ then $f(e)\, x(e)\, e\, y(e)$ is a cycle with exactly one chord, for any $e \in M_3$ where $f(e)$ is adjacent to $x(e)$ and $y(e)$. We may thus assume that $k \ge 3$; write $k = 2s- 1$ with $s>1$.

				Let $M_4$ be a submatching of $M_3$ of size $k_4$ that consists of edges $e$ where $f(e)$ is adjacent to $y(e)$ and $j(e) > 1$ (recall that $M_3$ contains at most one edge $e$ with $j(e) = 1$). 
				For each $e \in M_4$, let $g(e)$ be a $1$-father of $f(e)$. Here the $g(e)$'s are not necessarily distinct. We claim that each $g(e)$, with $e\in M_4$, is adjacent to at most $k+1$ of the vertices $f(e')$ with $e' \in M_4$. 
				Indeed, suppose that $e_1, \ldots, e_{k+2}$ are distinct edges in $M_4$ such that each $f_j$, with $j \in [s]$, is adjacent to $g(e)$. As usual, we can find paths $Q_1, \ldots, Q_{k+2}$ in $H \setminus (R(e_1) \cup \ldots \cup R(e_{k+2}))$ that are pairwise vertex-disjoint and $Q_i$ has ends $y_i$ and $x_{i+1}$ for $i \in [k+2]$. 
				Then $x_1 f_1 y_1 Q_1' \ldots x_{k+2} f_{k+2} y_{k+2} Q_{k+2}'x_1$ is an induced cycle in $G$ that contains at least $k+2$ neighbours of $g(e)$.
				Thus, a cycle with exactly~$k$ chords exists. 

				Let $F$ be the auxiliary graph on the edges in $M_4$, where $e_1$ and $e_2$ form an edge if $g(e_{3-i})$ is adjacent to at least one of $e_i, x(e_i), y(e_i),$ and $f(e_i)$ for some $i \in [2]$. 
				By \Cref{claim:neighbours-Hs} and the previous paragraph, each $g(e)$, with $e\in M_4$, is adjacent to at most $4k+4$ vertices amongst $\{e', x(e'), y(e'), f(e'): e' \in M_4\}$. 
				Therefore,~$F$ has at most $(4k+4)|F|$ edges and thus, it has an independent set of size at least $|F|/(8k+4) \ge k_4/(8k+4) \ge \ceil{k/2}$. Let $e_1, \ldots, e_{s}$ be distinct edges in $M_1$ that form an independent set in $F$. 
				Write $g_i:=g(e_i)$ and let $R_i'$ be a set of at most $3(k+1)$ vertices in $H$ such that $g_i$ has no neighbours in $(H \setminus R_i')_\sbb$ except for possibly $x_i, y_i, e_i$, and $x_j, y_j, e_j \notin R_i'$ for $j \in [s]$. 
				Such a set $R_i'$ exists by \Cref{claim:neighbours-Hs} and the fact that $e_i$'s form an independent set in $F$. 
				Observe that the $g_i$'s are distinct (since $g_i$ is adjacent to $f_i$ but not to $f_j$ with $j \in [s] \setminus \{i\}$), and $g_i$ sends no edges to $\{x_j, y_j, e_j, f_j\}$ for $j \neq i$.
				Let $R = R_1 \cup \ldots \cup R_{s} \cup R_1' \cup \ldots \cup R_{s}'$.
				As usual, let $Q_1, \ldots, Q_{s-1}$ be pairwise vertex-disjoint paths in $H \setminus R$ such that each $Q_i$ ends at $y_i$ and~$x_{i+1}$. 
				Then the path $y_1 Q_1' x_2 f_2 e_2 y_2 Q_2' \ldots x_{s-1} f_{s-1} e_{s-1} y_{s-1} Q_{s-1}x_s$ has exactly $2(s-2)=k-3$ chords. Similarly, paths with exactly $k-3$ chords exist between any $u\in\{x_1, y_1\}$ and $v\in \{x_{s}, y_{s}\}$.
				To extend one such path to a cycle with exactly $k$ chords, we need the following claim.
				
				\begin{claim} \label{claim:paths-one-fathers}
					There exist paths $P_1$ and $P_{s}$ such that $P_i$ has ends $g_i$ and one of $x_i, y_i$ and $V(P_i) \subseteq \{x_i, y_i, e_i, f_i, g_i\}$, for $i \in \{1, s\}$, and $P_1$ and $P_{s}$ have three chords in total.
				\end{claim}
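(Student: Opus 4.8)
To prove the claim, the plan is to treat the two indices $i \in \{1, s\}$ independently. The crucial piece of freedom is the one just noted: a path with exactly $k-3$ chords exists between \emph{any} $u \in \{x_1, y_1\}$ and \emph{any} $v \in \{x_s, y_s\}$, so the $\{x_i, y_i\}$-endpoint of $P_i$ may be chosen separately for $i=1$ and $i=s$. First I would pin down, for a fixed $i$, the induced subgraph on $S_i := \{x_i, y_i, e_i, f_i, g_i\}$. The pair $x_i y_i$ is a non-edge, since $\sub(e_i)$ subdivides it in the induced copy of $H_\sbb$; the six pairs $x_i e_i$, $y_i e_i$, $x_i f_i$, $y_i f_i$, $e_i f_i$, $f_i g_i$ are always edges (the first two are subdivision edges, the next two hold because $e_i \in M_4$, and the last two record that $f_i$ is a father of $\sub(e_i)$ and $g_i$ a $1$-father of $f_i$); and only the three pairs $x_i g_i$, $y_i g_i$, $e_i g_i$ have undetermined status. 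Let $\sigma_i \in \{0,1,2,3\}$ count how many of these last three are edges.

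Next I would tabulate, for each value of $\sigma_i$, which chord-counts can be realised by some $g_i$--$\{x_i,y_i\}$ path inside $S_i$ (recall a path $P$ has $e(G[V(P)]) - (|V(P)|-1)$ chords). Two constructions work for every $\sigma_i$: the path $g_i f_i x_i$ --- or just $g_i x_i$ when that is an edge --- has $0$ chords, while the spanning path $g_i f_i y_i e_i x_i$, whose four edges are always present, has exactly $2 + \sigma_i$ chords. To cover the rest I would use: the path $g_i f_i e_i x_i$, which has $f_i x_i$ as a forced chord and one extra chord for each of $x_i g_i$, $e_i g_i$ that is present, hence $1$ chord if $\sigma_i = 0$ and $3$ chords if $\sigma_i = 3$; and, when $\sigma_i = 2$, a short check of the three possibilities for the two extra edges (two of which are interchanged by swapping $x_i$ and $y_i$) showing that both $1$ and $2$ chords are attainable, for instance by $g_i f_i x_i$ and by $g_i f_i e_i x_i$ or $g_i f_i e_i y_i$. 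The upshot is that the set of attainable chord-counts on side $i$ contains $\{0,1,2\}$ when $\sigma_i \in \{0,2\}$ and contains $\{0,3\}$ when $\sigma_i \in \{1,3\}$.

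Finally I would combine the two sides so that the chord-counts sum to $3$: if $\sigma_1 \in \{1,3\}$, take $P_1$ with $3$ chords and $P_s$ with $0$ chords; symmetrically if $\sigma_s \in \{1,3\}$; and if $\sigma_1, \sigma_s \in \{0,2\}$, take $P_1$ with $1$ chord and $P_s$ with $2$ chords. In every case $P_1$ and $P_s$ together have three chords, and since their $\{x_i, y_i\}$-endpoints were chosen independently, the pair is compatible with the $(k-3)$-chord path, yielding the cycle with exactly $k$ chords.

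The only real work is the tabulation in the middle step, and the subtle point there --- the main obstacle --- is that it is \emph{not} always possible to realise exactly one chord on a given side: when $\sigma_i = 1$ with the single extra edge being $e_i g_i$ (so $x_i g_i, y_i g_i \notin E(G)$), one checks that every $g_i$--$\{x_i,y_i\}$ path inside $S_i$ has $0$, $2$ or $3$ chords. This is precisely why the combination step is organised around the dichotomy ``$3+0$ whenever some side has $\sigma_i \in \{1,3\}$, and $1+2$ when both sides have $\sigma_i \in \{0,2\}$'' rather than always attempting to split $3$ as $3+0$.
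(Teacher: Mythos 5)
Your proposal is correct and follows essentially the same strategy as the paper: a case analysis on $\sigma_i$, the number of neighbours of $g_i$ in $\{x_i,y_i,e_i\}$, exhibiting explicit short paths inside $\{x_i,y_i,e_i,f_i,g_i\}$ that realise prescribed chord counts, and then pairing the two sides so the counts sum to $3$. The only (cosmetic) difference is the pairing rule --- the paper splits on whether $\sigma=1$ holds on both sides (using $0+3$ there and $1+2$ otherwise), while you split on whether some side has $\sigma_i\in\{1,3\}$ --- and your identification of the genuinely obstructive subcase ($\sigma_i=1$ via the edge $e_ig_i$, where one chord is unattainable) matches the reason the paper restricts its one-chord construction to $\sigma\neq 1$.
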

				\begin{proof}[Proof of the claim]
					Write $x = x_i$, $y = y_i$, $e = e_i$, $f = f_i$, $g = g_i$ for some $i \in \{1, s\}$, and let $\sigma$ be the number of neighbours of $g$ in $\{x, y, e\}$.
					We aim to show that there are paths $P$ with vertices in $\{x, y, e, f, g\}$ whose ends are $g$ and one of $x, y$ satisfying the following requirements (separately, namely $P$ varies).  
					\begin{enumerate}[label = \rm(\arabic*)] 
					    \setcounter{enumi}{-1}
						\item \label{itm:zero}
							$P$ has no chords, if $\sigma = 1$;
						\item \label{itm:one}
							$P$ has exactly one chord, if $\sigma \neq 1$;
						\item \label{itm:two}
							$P$ has exactly two chords;
						\item \label{itm:three}
							$P$ has three chords, if $\sigma = 1$.
					\end{enumerate}
					By using this, 		\Cref{claim:paths-one-fathers} easily follows. Indeed, unless $\sigma = 1$ for both $i = 1$ and $s$, we can take one of $P_1$ and $P_{s}$ to have one and two chords, respectively, using \ref{itm:two} and \ref{itm:one}. 
					Otherwise, we can take one to have no chords and the other to have three, using \ref{itm:zero} and \ref{itm:three}.
					To show that paths $P$ that satisfies (1)--(4) exist, we consider the four possible values of $\sigma$.

					\begin{figure}[ht]
				        \def \scale {.9}	
						\centering
						\begin{subfigure}[b]{.16\textwidth}
				            \centering
				            \includegraphics[scale = \scale]{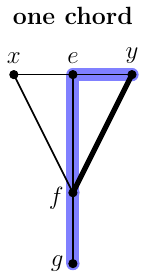}
                            
                            \vspace{.7cm}
				            \includegraphics[scale = \scale]{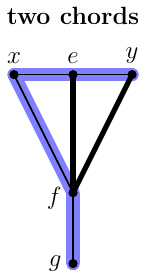}
				            \vspace{.6cm}
				            \caption*{\Large $\sigma = 0$}
						\end{subfigure}
						\hspace{.1cm}
						\begin{subfigure}[b]{.35\textwidth}
				            \centering
				            \includegraphics[scale = \scale]{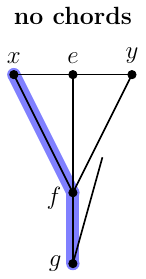}
				            \hspace{.2cm}
				            \includegraphics[scale = \scale]{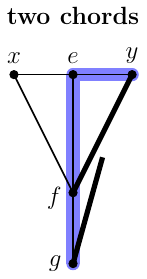}
				            
				            \vspace{.7cm}
				            
				            \includegraphics[scale = \scale]{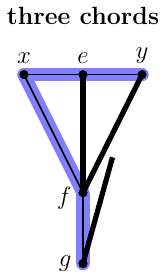}
				            \vspace{.6cm}
				            \caption*{\Large $\sigma = 1$}
						\end{subfigure}
						\hspace{.1cm}
						\begin{subfigure}[b]{.16\textwidth}
				            \centering
				            \includegraphics[scale = \scale]{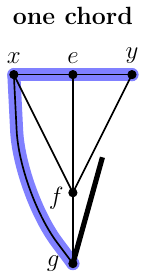}
                            
                            \vspace{.7cm}
				            \includegraphics[scale = \scale]{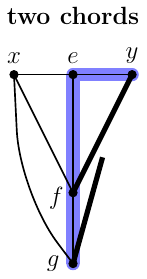}
				            \vspace{.6cm}
				            \caption*{\Large $\sigma = 2$}
						\end{subfigure}
						\hspace{.2cm}
						\begin{subfigure}[b]{.16\textwidth}
				            \centering
				            \includegraphics[scale = \scale]{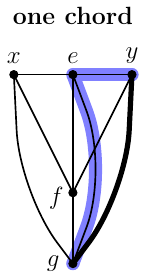}
                            
                            \vspace{.7cm}
				            \includegraphics[scale = \scale]{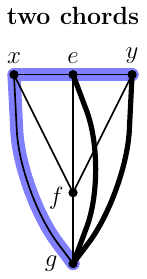}
				            \vspace{.6cm}
				            \caption*{\Large $\sigma = 3$}
						\end{subfigure}
						\caption{Proof of \Cref{claim:paths-one-fathers}}
						\label{fig:paths-one-fathers}
					\end{figure}

					\begin{itemize}
						\item
							$\sigma = 0$. 
							 For \ref{itm:one} take $P = gfey$, and for \ref{itm:two} take $P = gfxey$.
						\item
							$\sigma = 1$. Without loss of generality, $g$ is not adjacent to $x$.
							For \ref{itm:zero}, \ref{itm:two}, and \ref{itm:three}, take $P = gfx$, $P = gfey$, and $P = gfxey$, respectively.
						\item
							$\sigma = 2$. Without loss of generality, $g$ is adjacent to $x$. 
							For \ref{itm:one} and \ref{itm:two}, take $P = gxey$ and $P = gfey$, respectively.
						\item
							$\sigma = 3$. 
							For \ref{itm:one} and \ref{itm:two}, take $P = gey$ and $P = gxey$, respectively. \qedhere
					\end{itemize}
				\end{proof}

				Let $P_1$ and $P_{s}$ be as in~\cref{claim:paths-one-fathers}. Without loss of generality, we may assume that $y_1$ is an end of $P_1$ and $x_{s}$ is an end of $P_{s}$. Let $P$ be a unimodal path with ends $g_1$ and $g_{s}$. 
				Then the interior of $P$ sends no edges to the copy of $H_\sbb$ or $\{f_1, \ldots f_{s}\}$, since each $g_i$ is a $1$-father of $f_i$'s, whereas
				the $f_i$'s and the copy of $H_\sbb$ are in $G_2$. 
				Finally, augmenting the path $y_1 Q_1' x_2 f_2 e_2 y_2 Q_2' \ldots x_{s-1} f_{s-1} e_{s-1} y_{s-1} Q_{s-1}'x_s$ by adding the path $x_s P_{s}g_s P g_1 P_1 y_1$ gives a cycle with exactly $k$ chords.
				\begin{figure}[ht]
					\centering
					\includegraphics[scale = 1]{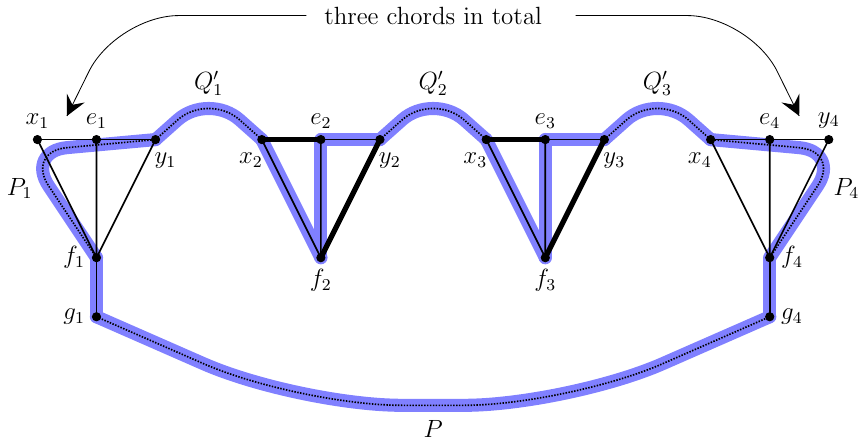}
					\vspace{-.4cm}
					\caption{Case 3: odd $k$ and $f_i$ adjacent to $x_i$ and $y_i$}
					\label{fig:3}
				\end{figure}
		\end{proof}

\section{Number theory lemmas} \label{sec:number-theory}
    
    In this section we prove two results which are variants Lagrange's four-square theorem, which asserts that every positive integer can be written as the sum of at most four integer squares.

	\begin{lem} \label{lem:twenty-squares}
		For every $c$, every large enough $k$ can be written as a sum of exactly $20$ squares larger than $c^2$.
	\end{lem}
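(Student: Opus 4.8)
The plan is to reduce the statement to Lagrange's four-square theorem applied to a shifted value. Write $k = 16c'^2 \cdot 16 + r$... actually let me think more carefully. We want to write $k$ as a sum of exactly $20$ squares, each exceeding $c^2$. The idea: fix $16$ of the squares to be a large fixed value, say each equal to $M^2$ for a suitable $M > c$, and use Lagrange's theorem on the remaining $4$ squares — but we need those $4$ to also be $> c^2$, which Lagrange alone doesn't guarantee.

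So here is the refined plan. First, recall Lagrange: every nonnegative integer is a sum of four squares. I would apply this to $\lfloor k/16 \rfloor$ (or a nearby value), writing it as $a_1^2 + a_2^2 + a_3^2 + a_4^2$; then $16 \lfloor k/16\rfloor = (2a_1)^2 + \dots + (2a_4)^2$ scaled appropriately — more cleanly, multiplying a four-square representation of any integer $m$ by $4$ gives a four-square representation of $4m$ with all terms even, and iterating, $16m = (4b_1)^2+\dots+(4b_4)^2$. The point of the factor $16$ is to absorb the small remainder $k \bmod 16 \in \{0,1,\dots,15\}$ into the "extra" squares: choose $16$ additional squares from the multiset $\{(c+1)^2, (c+2)^2, \dots\}$... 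Let me instead structure it as follows, which I think is the cleanest route and the one I would write up.

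\textbf{Step 1.} Choose an integer $t$ with $t > c$ and consider the $16$ fixed squares equal to $t^2$; their sum is $16t^2$. \textbf{Step 2.} Given large $k$, set $k' = k - 16t^2$, and observe $k'$ is still large and positive. The remaining task is to write $k'$ as a sum of exactly $4$ squares each $> c^2$. \textbf{Step 3.} Here is the key trick for the four squares: apply Lagrange to $k' - 4c'^2$ for a suitable $c' \geq c$ chosen so that $k' \equiv 4c'^2 \pmod{\text{something}}$? No — instead, use that for any integer $n \geq 0$, $4n$ is a sum of four \emph{positive even} squares once $n$ is large (apply Lagrange to $n$, get $n = a_1^2+\dots+a_4^2$; if some $a_i = 0$ we need to fix this). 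The genuine obstacle is ensuring all four summands are strictly positive and indeed $> c^2$, not just that a representation exists. I expect this to be the main obstacle, and I would resolve it by a density/pigeonhole argument: the number of $4$-tuples $(a_1,a_2,a_3,a_4)$ with each $a_i \in (c, \sqrt{k'}]$ and $\sum a_i^2 = k'$ can be shown to be positive for large $k'$ by comparing with the total number of representations $r_4(k')$, which by Jacobi's formula is $8\sigma(k')$ (sum of divisors, for $k'$ not divisible by $4$) or a comparable quantity — this is $\Omega(k'/\log\log k')$, while the number of representations with some coordinate $\le c$ in absolute value is $O(c \cdot \sqrt{k'} \cdot r_1^{(3)})$-type bound... this is getting complicated.

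Let me give the version I'd actually commit to. \textbf{Cleanest approach:} It suffices to show every large $n$ is a sum of exactly $5$ squares each $> c^2$, and then iterate/combine to reach $20$ (since $20 = 4 \cdot 5$, write $k = k_1 + k_2 + k_3 + k_4$ with each $k_i$ large and each a sum of $5$ such squares — a trivial partition works). And "every large $n$ is a sum of $5$ squares $> c^2$" is a known strengthening: by a classical result, every sufficiently large integer \emph{not} of certain forms is a sum of $5$ \emph{positive} squares, and more is true — one can subtract off $4$ fixed squares $(2c+1)^2, \ldots$? Honestly, the most self-contained route: take the largest square $s^2 \le n - 16c^2$ with $s \equiv 0 \pmod{4}$, note $s > c$ for large $n$ and $0 \le n - 16c^2 - s^2 \le O(s)$ is small compared to... no.

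I'll settle on this plan to write up: \textbf{(i)} Reduce to: every large $m$ divisible by a controlled amount is a sum of $4$ squares each $>c^2$, absorbing the residue into $16$ copies of a fixed square $> c^2$ (choosing among $16$ values $t^2, t^2, \ldots$ where the $t$'s range over $\{c+1, c+2, c+3, c+4\}$ lets us adjust $k \bmod 16$ — since sums of $16$ elements from $\{(c+1)^2,\dots,(c+4)^2\}$ hit every residue class mod $16$). \textbf{(ii)} For the $4$-square part, apply Lagrange's theorem to $(m - 4(c+1)^2)/\!\!\gcd$-adjusted value and use that, for large $m$, we may further insist the representation avoids zero coordinates — here I would invoke that $4m'$ is a sum of four positive squares for all large $m'$ (a standard consequence of Lagrange plus the fact that the exceptional set for "sum of four positive squares" is finite, which follows from Jacobi's four-square formula or from the three-square theorem applied cleverly), then scale up by a further square factor $\ge (c+1)^2$ to make all four summands $> c^2$. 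The main obstacle, as flagged, is the passage from "sum of four squares" to "sum of four squares each exceeding $c^2$"; I'd handle it by scaling (multiply a four-positive-square representation of a small quotient by $(c+1)^2$) rather than by analytic counting, which keeps the proof elementary.
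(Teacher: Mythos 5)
Your plan correctly isolates the real obstacle (upgrading Lagrange's four squares to squares that are each larger than $c^2$), but the mechanism you finally commit to does not overcome it. Your step (ii) produces four large squares by taking a four-positive-square representation of some $m'$ and scaling it by a square factor $s^2 \ge (c+1)^2$; this only represents numbers $m = s^2 m'$, i.e.\ it requires the remaining target $m = k - (\text{the }16\text{ fixed squares})$ to be divisible by a square exceeding $c^2$. Your step (i) cannot arrange this: sums of $16$ squares drawn from $\{(c+1)^2,\dots,(c+4)^2\}$ take only $O(1)$ distinct values, all within an interval of length $O(c)$, so while they can adjust $k$ modulo $16$ they cannot hit a prescribed residue class modulo $(c+1)^2$ (a modulus of order $c^2$). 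Since a positive proportion of integers are squarefree, no bounded adjustment can force the needed divisibility, so the four-square step fails for most $k$. A secondary error: the exceptional set for ``sum of four positive squares'' is \emph{not} finite --- the numbers $2\cdot 4^t$, $6\cdot 4^t$, $14\cdot 4^t$ are never sums of four positive squares --- so the claim that $4m'$ is a sum of four positive squares for all large $m'$ is false as stated (though this part is patchable).

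The missing idea, which is how the paper proceeds, is a way to merge a possibly small (even zero) Lagrange square with a fixed large square to obtain two large squares: if $a \equiv b \pmod 2$ then $a^2 + b^2 = 2\bigl(\tfrac{a+b}{2}\bigr)^2 + 2\bigl(\tfrac{a-b}{2}\bigr)^2$, and if $|a| \le c$ while $b$ is a fixed value around $50c$ (or $4c+1$, chosen to match the parity of $a$), both resulting squares exceed $c^2$. Applying this, each $f(x) = x^2 + 2500c^2 + (4c+1)^2$ is a sum of exactly five squares larger than $c^2$ for \emph{every} $x \ge 0$, and writing $k - 4\bigl(2500c^2+(4c+1)^2\bigr)$ as $x_1^2+\dots+x_4^2$ by Lagrange gives $k = \sum_i f(x_i)$ as a sum of $20$ such squares. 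Your alternative sketch of splitting $k$ into four parts each a sum of five large squares is structurally the same as the paper's proof, but you leave the key five-square step to an unproved appeal to ``a classical result''; some version of the parity identity above (or a comparably concrete device) is needed to close the argument.
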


	\begin{proof}
	    Given a non-negative integer $x$, let $f(x)= x^2 + 2500c^2 + (4c+1)^2$. We first claim that for every integer $x\geq 0$, the number $f(x)$ can be written as the sum of exactly five squares larger than~$c^2$. If $x \ge c$, this follows by writing
	    \begin{equation*}
	        f(x) = x^2 + (30c)^2 + (24c)^2 + (32c)^2 + (4c+1)^2.
	    \end{equation*}
	    If $x \le c$ and $x$ is even, we have
	    \begin{equation*}
	        f(x) = 2\left(\frac{50c + x}{2}\right)^2 + 2\left(\frac{50c - x}{2}\right)^2+(4c+1)^2,
	    \end{equation*}
	    which readily implies that $f(x)$ is the sum of five squares larger than $c^2$. 
	    Finally, if $x \le c$ and $x$ is odd, we use the following equality to reach the same conclusion.
	    \begin{equation*}
	        x^2 + (4c + 1)^2 
	        = 2\left(\frac{4c + 1 + x}{2}\right)^2 + 2\left(\frac{4c + 1 - x}{2}\right)^2.
	    \end{equation*}

        Let $\ell := k - 4 \left( 2500c^2 + (4c+1)^2 \right)$ and suppose that $k$ is large enough so that $\ell \ge 0$.
		By the four squares theorem, there exist non-negative integers $x_1, \ldots, x_4$ such that $\ell = x_1^2 + x_2^2+x_3^2 + x_4^2$. Equivalently,
		\begin{equation*}
			k = \sum_{i =1}^4 \left(x_i^2 + 2500c^2 + (4c+1)^2\right) = \sum_{i =1}^4 f(x_i).
		\end{equation*}
		By using the fact that each $f(x_i)$ is the sum of five squares larger than $c^2$, we conclude that $k$ is the sum of $20$ squares larger than $c^2$, as required.
	\end{proof}

	\begin{lem} \label{lem:sum-skewed-squares}
		For every $c$, for every large enough $k$ which is divisible by $4$, there exist $a_1, \ldots, a_{80} \ge c$ such that $k = \sum_{i \in [80]} a_i(a_i + 1)$. 
	\end{lem}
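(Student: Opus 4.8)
The plan is to deduce the lemma from \Cref{lem:twenty-squares} together with the elementary identity
\[
(b-1)b + b(b+1) = 2b^2,
\]
which expresses twice a square as a sum of two consecutive numbers of the form $a(a+1)$. The hypothesis $4 \mid k$ will enter only to guarantee that $k/4$ is an integer, so that \Cref{lem:twenty-squares} applies to it directly.

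First I would write $k = 4m$; since $k$ is large, $m$ exceeds the ($c$-dependent) threshold supplied by \Cref{lem:twenty-squares}. Applying that lemma to $m$ twice produces integers $b_1, \dots, b_{40}$, each at least $c+1$, with $m = b_1^2 + \cdots + b_{20}^2$ and $m = b_{21}^2 + \cdots + b_{40}^2$. Adding these two representations and expanding each square via the identity above,
\[
k = 4m = \sum_{i=1}^{40} 2 b_i^2 = \sum_{i=1}^{40} \bigl( (b_i - 1) b_i + b_i (b_i + 1) \bigr),
\]
which exhibits $k$ as a sum of exactly $80$ numbers of the form $a(a+1)$. Since $b_i \ge c+1$, we have $b_i - 1 \ge c$ and $b_i \ge c$, so all $80$ parameters are at least $c$; relabelling them $a_1, \dots, a_{80}$ finishes the proof.

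I do not anticipate any genuine obstacle: the entire argument rests on the identity $(b-1)b + b(b+1) = 2b^2$ and on the split $40 = 20 + 20$, so that two invocations of \Cref{lem:twenty-squares} suffice. The only point needing a little care is the quantifier bookkeeping --- ensuring that ``$k$ large enough'' can be chosen so that $k/4$ exceeds the constant that \Cref{lem:twenty-squares} produces for the given $c$ --- which is immediate. (In fact the same proof works assuming only $2 \mid k$, by splitting $k/2$ as $\lceil k/4 \rceil + \lfloor k/4 \rfloor$; the hypothesis $4 \mid k$ is merely the convenient form needed in the application.)
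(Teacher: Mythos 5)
Your proposal is correct and is essentially the paper's own argument: the paper applies \Cref{lem:twenty-squares} once to $k/4$ and decomposes each resulting $4x^2$ as $2\bigl(x(x+1)+x(x-1)\bigr)$, which is the same identity $2b^2=(b-1)b+b(b+1)$ you use, just grouped as $20\times 4$ terms instead of $40\times 2$. The quantifier bookkeeping and the lower bound $b_i-1\ge c$ are handled correctly, so there is nothing to fix.
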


	\begin{proof}
		By \Cref{lem:twenty-squares}, the integer $k/4$ can be written as the sum of twenty squares larger than $(c+1)^2$. Write $k/4 = \sum_{i =1}^{20} x_i^2$, where $x_i \ge c+1$. Then $k = \sum_{i=1}^{20} 4x_i^2$, i.e., $k$ is the sum of $20$ even squares larger than $4(c+1)^2$. 
		Observe now that every even square larger than $4(c+1)^2$ can be written as a sum $\sum_{i=1}^4 a_i (a_i + 1)$ with $a_i \ge c$, as $4a^2 = 2 \cdot \big( a(a+1) + a(a-1) \big)$.
		Thus, $k$ is a sum $\sum_{i=1}^{80} a_i(a_i + 1)$ with $a_i \ge c$, as required.
	\end{proof}

\section{Proof of approximate result} \label{sec:approximate}
    
    The aim in this section is to prove the following result, that allows us to find a cycle of almost the required number of chords, in any graph whose chromatic number is much larger than the clique number. 
    
    \begin{thm} \label{thm:main-almost}
        Let $k$ be large enough. Then there is a function $f$ such that for every graph $G$ either $\chi(G) \le f(\omega(G))$ or $G$ has a cycle with exactly $k'$ chords for some $k' \in \{k, k+1, k+2, k+3\}$.
    \end{thm}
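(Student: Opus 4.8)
The plan is to argue by contradiction. Assume $\chi(G)$ is much larger than $\omega(G)$ — large enough as a function of $k$ and $\omega(G)$ — and that $G$ contains no cycle with exactly $k'$ chords for any $k' \in \{k, k+1, k+2, k+3\}$; in particular $G$ has no cycle with exactly $k$ chords. I would fix a large parameter $\ell$ (polynomial in $k$, large enough that all the complete bipartite pieces constructed below fit inside a $K_{\ell,\ell}$) and a constant $t$ (at least $80$, with slack), pass to a $p$-extraction $G_p$ of $G$ for a huge $p$ (so $\chi(G_p)$ is still large), and apply \Cref{cor:bt-induced-bip} repeatedly — using that no induced subgraph of $G$ has a cycle with exactly $k$ chords — to extract $t$ pairwise non-adjacent induced copies $K^{(1)}, \dots, K^{(t)}$ of $K_{\ell,\ell}$ inside $G_p$, where non-adjacent means there are no edges between distinct copies. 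The extraction machinery then lets me join any designated pair of vertices of $G_p$ by a unimodal path, using a distinct layer for each such path, so that by \Cref{obs:unimodal} these connecting paths are essentially independent of each other and of all the copies: only the end vertices of a connecting path, and their neighbours on that path, can see the copies.

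Next I would build a cycle from these ingredients. Inside each copy $K^{(i)}$ I route the cycle along a Hamilton path of a carefully chosen induced complete bipartite subgraph ("piece"): a piece traversed between opposite sides of its bipartition contributes exactly $x_i^2$ chords, while a piece traversed between the same side contributes exactly $a_i(a_i+1)$ chords (with $x_i$, $a_i$ controllable up to order $\sqrt k$, since the piece has $O(\sqrt k)$ vertices). Consecutive pieces — whether in distinct copies or in the same copy — are then stitched together by unimodal paths. The quantitative heart of this step, which is exactly what \Cref{lem:sqrtk-different-Kll,lem:sqrtk-same-Kll} provide, is that a unimodal connection, including the chords created where the fathers of its endpoints meet the pieces, contributes only $O(\sqrt k)$ chords, and in a way one can account for precisely once the connections are fixed. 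Writing $c_0$ for the resulting (bounded by $O(\sqrt k)$) total connection contribution, the number of chords of the assembled cycle is $c_0$ plus the sum of the individual piece contributions, so it only remains to choose the pieces so that this sum equals the required value.

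This is where the number-theoretic lemmas of \Cref{sec:number-theory} enter, together with a parity condition. The parities of the unimodal connections, which we cannot control, together with the requirement that the final cycle have even length, impose a parity constraint on how many pieces may be traversed between opposite sides of their bipartitions (those giving square chord counts) versus the same side (those giving $a(a+1)$ chord counts). In the favourable case, all pieces may be taken of square type, so I need only write the target $k - c_0$ — which is large, since $c_0 = O(\sqrt k)$ — as a sum of $20$ squares each larger than $c^2$ for a suitable constant $c$; this is precisely \Cref{lem:twenty-squares}, and it yields a cycle with exactly $k$ chords. In the unfavourable case (almost) all pieces are forced to be of $a(a+1)$ type, so I must instead express the target as a sum of $80$ numbers of the form $a_i(a_i+1)$ with $a_i \ge c$; by \Cref{lem:sum-skewed-squares} this is possible whenever the target is divisible by $4$, which I arrange by replacing $k$ with the appropriate $k' \in \{k, k+1, k+2, k+3\}$ (possibly absorbing one free square piece to also fix the residue). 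Either way we produce a cycle with exactly $k'$ chords for some $k' \in \{k,\dots,k+3\}$, contradicting our assumption; and it is exactly the $a(a+1)$ case, with its divisibility-by-$4$ requirement, that prevents us from always hitting $k$ on the nose.

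The main obstacle I expect is the interface between the unimodal connections and the copies. First, one must iterate \Cref{cor:bt-induced-bip} while keeping the chromatic number large and the copies genuinely pairwise non-adjacent. More delicately, the fathers of the endpoints of a unimodal path can be adjacent to many vertices of a copy, so each piece must be chosen inside its copy so that these adjacencies produce a prescribed, \emph{countable} number of chords, and one must track the parity constraints this forces on the traversals. Packaging all of this into the clean statements \Cref{lem:sqrtk-different-Kll,lem:sqrtk-same-Kll}, so that the final assembly reduces to an application of \Cref{lem:twenty-squares} or \Cref{lem:sum-skewed-squares}, is the technical core of the argument.
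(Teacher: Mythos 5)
Your overall architecture---extract pairwise non-adjacent induced copies of $K_{\ell,\ell}$ in a deep extraction, thread them together with unimodal paths contributing $O(\sqrt k)$ chords via \Cref{lem:sqrtk-same-Kll,lem:sqrtk-different-Kll}, and close the chord count with \Cref{lem:twenty-squares} or \Cref{lem:sum-skewed-squares} according to a parity dichotomy---is exactly the paper's proof. Two steps, however, would not go through as you state them.

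First, ``applying \Cref{cor:bt-induced-bip} repeatedly'' does not yield copies of $K_{\ell,\ell}$ with \emph{no edges between them}. Having found one copy $K$, to keep the next copy non-adjacent you must pass to $G \setminus (K \cup N(K))$, and deleting $N(K)$ may destroy the chromatic number. The paper's \Cref{lem:find-disjoint-complete-bip} escapes this with a maximality argument: either a maximal non-adjacent family is large, or some $G[N(v)]$ retains large chromatic number---and the latter has strictly smaller clique number, which is why the paper proves \Cref{thm:main-almost} by induction on $\omega(G)$, invoking \Cref{cor:bt-induced-bip} only in the induced-$K_{\ell,\ell}$-free branch. Your proposal has no induction on the clique number and no substitute for it, so this is a genuine gap.

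Second, you attribute the parity obstruction to the parities of the unimodal connections and the evenness of the cycle length, which supposedly force some pieces to be traversed within one side of their bipartition. That is not where the obstruction lives. In the paper every piece is traversed between opposite sides (an odd-length path contributing $a_s^2$ chords); the uncontrollable quantity is $t_s$, the number of connection endpoints that are \emph{complete} to a side of the $s$-th copy, which adds a linear cross-term $t_s a_s$. When $t_s$ is even one completes the square, $a_s^2 + t_s a_s = (a_s + t_s/2)^2 - t_s^2/4$, and \Cref{lem:twenty-squares} gives exactly $k$ chords; when $t_s$ is odd for most $s$ one instead gets terms $b_s(b_s+1)$, and the divisibility-by-$4$ hypothesis of \Cref{lem:sum-skewed-squares} is precisely what costs the slack $\{k,\dots,k+3\}$. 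Your closing paragraph does recognise that the fathers' adjacencies to the copies must be made countable (the paper handles this with a Ramsey-type refinement making each relevant vertex complete or anti-complete to each side), but the dichotomy to be tracked is the parity of these complete attachments, not the traversal direction.
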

    
    The proof relies on results from previous sections, as well as the following two lemmas. 
    The next lemma allows us to assume that there is a large collection of pairwise disjoint large induced balanced bipartite subgraphs with no edges between them.
    
    \begin{lem} \label{lem:find-disjoint-complete-bip}
    	Every graph $G$ contains either $\ell$ pairwise vertex-disjoint induced copies of $K_{\ell, \ell}$ with no edges between them or an induced subgraph $H$ with $\chi(H)\geq \chi(G)/(2\ell^2)-1$ such that either $\omega(H) < \omega(G)$ or it is induced $K_{\ell, \ell}$-free.
    \end{lem}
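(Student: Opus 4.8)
The plan is to argue by maximality of a collection of disjoint complete bipartite subgraphs. Let $\mathcal{K}$ be a maximal collection of pairwise vertex-disjoint induced copies of $K_{\ell,\ell}$ in $G$ with no edges between distinct members. If $|\mathcal{K}|\ge \ell$ we are immediately in the first alternative, so assume $m:=|\mathcal{K}|\le \ell-1$ and set $X:=\bigcup_{K\in\mathcal{K}}V(K)$, a vertex set of size $2\ell m\le 2\ell(\ell-1)$. The idea is to cover $V(G)$ by the neighbourhoods $N(v)$ for $v\in X$ together with the "leftover" set, and to observe that each piece of this cover is of one of the two desired types.

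The two structural facts I would isolate are the following. First, for every $v\in X$ any clique in $G[N(v)]$ together with $v$ forms a larger clique of $G$, so $\omega(G[N(v)])\le\omega(G)-1<\omega(G)$. Second, let $W:=V(G)\setminus\bigcup_{v\in X}N(v)$; then $G[W]$ contains no induced $K_{\ell,\ell}$. To see this, note that since $\ell\ge 1$ every vertex of $X$ has a neighbour inside its own copy of $K_{\ell,\ell}$, hence $X\subseteq\bigcup_{v\in X}N(v)$, so $W\cap X=\emptyset$ and, moreover, no vertex of $W$ has a neighbour in $X$ (a neighbour of $x\in X$ lies in $N(x)$, hence not in $W$). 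Consequently, an induced copy of $K_{\ell,\ell}$ inside $G[W]$ would be vertex-disjoint from $X$ and joined to it by no edges, and could be added to $\mathcal{K}$, contradicting maximality. (When $m=0$ this already gives that $G$ itself is induced $K_{\ell,\ell}$-free.)

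Now $\{N(v):v\in X\}\cup\{W\}$ is a cover of $V(G)$ by at most $|X|+1\le 2\ell(\ell-1)+1\le 2\ell^2$ sets (using $\ell\ge 1$), and the standard inequality $\chi(G)\le\sum_i\chi(G[V_i])$ for a cover $\{V_i\}$ shows that some piece, with induced subgraph $H$, satisfies $\chi(H)\ge\chi(G)/(2\ell^2)\ge\chi(G)/(2\ell^2)-1$. If $H=G[N(v)]$ for some $v\in X$, then $\omega(H)<\omega(G)$ by the first fact; if $H=G[W]$, then $H$ is induced $K_{\ell,\ell}$-free by the second. In either case $H$ is as required, and in fact the argument even avoids the $-1$ slack in the statement.

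The only genuinely delicate point is verifying that $G[W]$ is induced $K_{\ell,\ell}$-free, which relies on combining the "no edges between distinct members" hypothesis with the observation $X\subseteq\bigcup_{v\in X}N(v)$; everything else (the size bound $|X|+1\le 2\ell^2$, the clique inequality, and the colouring bound for covers) is routine.
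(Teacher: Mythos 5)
Your proof is correct and follows essentially the same route as the paper's: take a maximal collection of pairwise disjoint, mutually non-adjacent induced copies of $K_{\ell,\ell}$, cover $V(G)$ by the neighbourhoods of its vertices plus the leftover set, and note that each neighbourhood piece has smaller clique number while the leftover piece is induced $K_{\ell,\ell}$-free by maximality. The only (harmless) difference is that you cover with $N(v)$ after checking $X\subseteq\bigcup_{v\in X}N(v)$, whereas the paper covers with $N(v)\cup\{v\}$ and absorbs the extra vertex via the $-1$ slack in the chromatic bound.
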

    
    \begin{proof}
        Let $K_1$, $\ldots$, $K_t$ be a maximal collection of pairwise vertex-disjoint induced copies of $K_{\ell, \ell}$ with no edges between them; write $K = V(K_1 \cup \ldots \cup K_t)$. We are done if $t \ge \ell$, so suppose that $t< \ell$. 
        Notice that one of the subgraphs $G[N(v)\cup \{v\}]$ for some $v \in  K$ or the graph $G \setminus (K \cup N(K))$ has chromatic number at least $\chi(G)/(2\ell^2)$, since the union of these $2\ell t+1$ graphs covers all the vertices in $G$ and $2\ell t + 1 \le 2\ell^2$.
    
        Suppose first that some $G[N(v)\cup\{v\}]$ has chromatic number at least $\chi(G)/(2\ell^2)$.
        Then $\chi(G[N(v)])$ is at least $\chi(G)/(2\ell^2)-1$ and $\omega(G[N(v)])<\omega(G)$, so we can take $H:=G[N(v)]$.
        Otherwise, if $G \setminus (K \cup N(K))$ has chromatic number at least $\chi(G)/(2\ell^2)$, then $H:=G \setminus (K \cup N(K))$ is induced $K_{\ell,\ell}$-free subgraph by maximality of $t$. 
    \end{proof}
    
    The next lemma is the key ingredient in proving \Cref{thm:main-almost}, whose proof will be given in \Cref{subsec:finding-cycle-almost-k}.
    
 	\begin{lem} \label{lem:finding-cycle-from-Kll}
    	Let $\ell\gg k\gg 1$,\footnote{concretely, it suffices to take $k \ge 10^{12}$. 
    	}
    	and let $p > 300$.
		In a $p$-extraction of a graph $G$, suppose that there are $101$ induced copies of $K_{\ell, \ell}$ with no edges between them. 
		Then there is a cycle with $k'$ chords, for some $k' \in \{k, k+1, k+2, k+3\}$. 
	\end{lem}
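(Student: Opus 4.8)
The plan is to build the required cycle out of ``zigzag'' paths taken inside the $101$ copies of $K_{\ell,\ell}$, joined up cyclically by unimodal paths placed in distinct layers of the extraction, and then to tune the number of chords using the two lemmas of Section~\ref{sec:number-theory}.

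First, the building blocks: in a copy $H$ of $K_{\ell,\ell}$ with parts $A,B$, a path $a_1 b_1 a_2 b_2 \cdots a_s b_s$ (with $a_i\in A$, $b_i\in B$ distinct) has vertex set inducing $K_{s,s}$, hence $s^2-(2s-1)=(s-1)^2$ chords, while a path $a_1 b_1 \cdots a_s b_s a_{s+1}$ with both ends in $A$ has $(s+1)s-2s=s(s-1)$ chords, a number of the form $a(a+1)$. Since there are no edges between distinct copies, chords inside such a path never interact with another copy, and since $\ell\gg k$ we may route such a path so as to avoid any prescribed set of $O(k)$ vertices of $H$ while still realising any two prescribed vertices (on the appropriate sides) as its ends.

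Next, the assembly. I would choose $m\le 101$ of the copies $H_1,\dots,H_m$, a zigzag path $P_i$ in $H_i$ with distinguished ends $u_i,v_i$, and --- using $p>300\ge m$ --- unimodal paths $U_1,\dots,U_m$ lying in $m$ distinct layers, with $U_i$ joining $v_i$ to $u_{i+1}$ (indices mod $m$). This gives a cycle $C$: each $U_i$ is induced, since its end-edge would run between distinct copies, and by \Cref{obs:unimodal} their interiors are pairwise disjoint, lie outside $G_p$, and send no edges to the copies or to later $U_j$'s. The chords of $C$ are then the chords inside the $P_i$ (summing to $\sum_i(\text{number of chords of }P_i)$, with no cross-terms between distinct copies) together with a bounded ``overhead'' of chords incident to the endpoints of the $U_i$ and their path-neighbours (the fathers), all of which lie in $G_p$. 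The crucial point here --- this is where the $O(\sqrt k)$-type lemmas referenced in the overview come in --- is that a father cannot be adjacent to too many vertices of any copy: otherwise, routing a zigzag through those neighbours inside that copy and closing it through the father already produces a cycle with exactly $k$ chords, by a wheel-type construction as used earlier in the paper. Consequently, building first a ``skeleton'' cycle with all zigzags minimal (each contributing $0$ chords), the overhead becomes a known value $\Delta=\Delta(k)$ bounded by a function of $k$ alone; and when we later inflate the zigzags we route them to avoid all the (few) extra father-neighbours, so that $\Delta$ stays fixed.

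Finally, the tuning. We must add $k'-\Delta$ further chords by inflating the $P_i$, for some $k'\in\{k,k+1,k+2,k+3\}$. If the inflatable zigzags can all be taken with ends on opposite sides of their copies --- a parity condition on how the chosen unimodal connections attach --- then each contributes a perfect square, and applying \Cref{lem:twenty-squares} to the (large, since $k$ is large and $\Delta$ is bounded in $k$) integer $k-\Delta$ writes it as a sum of $20$ squares, which we realise with $20$ inflated zigzags; then $C$ has exactly $k$ chords. Otherwise the inflated zigzags contribute numbers of the form $a(a+1)$; choosing $k'\in\{k,k+1,k+2,k+3\}$ with $k'\equiv\Delta\pmod 4$ (possible, as four consecutive integers cover all residues mod $4$) makes $k'-\Delta$ a large multiple of $4$, and \Cref{lem:sum-skewed-squares} writes it as $\sum_{i=1}^{80}a_i(a_i+1)$, realised with $80$ inflated zigzags --- here we use that we have enough copies available, the assumed $101$ comfortably sufficing. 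In either case $C$ is a cycle with exactly $k'$ chords. I expect the main obstacle to be the bookkeeping of the third paragraph: proving that the fathers of the unimodal connections cannot be adjacent to much of any copy without already finishing, and that the zigzag inflation can be performed with $\Delta$ held fixed; the divisibility-by-$4$ hypothesis of \Cref{lem:sum-skewed-squares} is precisely what forces the slack $\{k,k+1,k+2,k+3\}$.
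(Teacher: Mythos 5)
Your overall architecture (zigzags inside the copies, unimodal connectors in distinct layers, then \Cref{lem:twenty-squares} or \Cref{lem:sum-skewed-squares} to tune the count) is the same as the paper's, but there is a genuine gap in your third paragraph, and it is exactly the point the paper's proof is built around. You claim that a father of a unimodal connector ``cannot be adjacent to too many vertices of any copy'' of $K_{\ell,\ell}$, else a wheel-type construction finishes, and you use this to keep the overhead $\Delta$ fixed while inflating the zigzags. This is false: the wheel argument needs an \emph{induced} cycle with $k+2$ neighbours of the vertex, and $K_{\ell,\ell}$ contains no induced cycles longer than $4$; a single vertex complete to one whole side of one copy yields, when you close a zigzag through it, a cycle with $\approx s^2$ chords (a quadratic, sparsely distributed set of values), not a cycle with exactly $k$ chords. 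The $O(\sqrt k)$ lemmas (\Cref{lem:sqrtk-same-Kll,lem:sqrtk-different-Kll}) that you invoke bound edges from a vertex to a unimodal \emph{path}, not to a copy of $K_{\ell,\ell}$, so they do not supply the missing bound either. Consequently such a vertex cannot be avoided by rerouting the zigzag (you would have to avoid an entire side of the bipartition), and inflating that zigzag to length $2a_i+1$ adds roughly $a_i$ extra chords: the ``overhead'' is not a constant $\Delta$ but contains linear terms.

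The paper handles this by a Ramsey/pigeonhole cleaning step making each relevant father complete or anti-complete to each side of each copy, and then carrying the resulting linear terms explicitly: the chord count is $f_G(k)+\sum_s\bigl(a_s^2+t_s a_s\bigr)$ where $t_s\le 400$ counts the fathers complete to a side of $K_s$. Completing the square turns $a_s^2+t_s a_s$ into a shifted perfect square when $t_s$ is even and into a shifted $b(b+1)$ when $t_s$ is odd, and the dichotomy ``at least $20$ of the $t_s$ even, or at least $80$ odd'' is what selects between the two number-theory lemmas. Your stated parity condition (whether a zigzag's ends lie on the same or opposite sides) is not the real one --- with a free choice of attachment points you could always put the ends on opposite sides, so in your framework the $a(a+1)$ case would never be forced, which signals that the true obstruction has been missed. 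To repair the argument you would need to add the cleaning step and redo the tuning with the linear terms present.
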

	
	We now prove \Cref{thm:main-almost} by using the previous lemmas.
    
    \begin{proof}[Proof of \Cref{thm:main-almost}]
        We prove by induction on $\omega$ that there exists $f(\omega)$ such that, for a graph $G$ with clique number $\omega$, if $\chi(G) > f(\omega)$ then $G$ has a cycle with exactly $k'$ chords for some $k' \in \{k, k+1, k+2, k+3\}$. 
        For $\omega = 1$, we can take $f(1) = 2$, for which the statement is vacuously true as there are no graphs with clique number $1$ and chromatic number at least $2$.
        
        Suppose that for $\omega_0 \ge 1$, $f(\omega)$ is defined for $\omega \le \omega_0$ so that the above statement holds.
        Let $p$ and $\ell$ be sufficiently large, e.g.,\ $p\geq 300$ and $\ell\geq 2^{500}\sqrt{k}$, and let $g$ be a function as in \Cref{cor:bt-induced-bip} for $k$ and $\ell$; namely, if $\chi(G) > g(\omega(G))$ then $G$ contains either an induced $K_{\ell,\ell}$ or a cycle with exactly $k$ chords.
        Now set $f(\omega_0+1) = 2^{p+1} \ell^2 \cdot \left(\max\{g(\omega_0), f(\omega_0)\} + 1\right)$. 
        Consider a graph $G$ with $\omega(G) = \omega_0+1$ and $\chi(G) > f(\omega_0+1)$. Our goal is to show that $G$ contains a cycle with exactly $k'$ chords, for some $k' \in \{k, k+1, k+2, k+3\}$. 
        Let $G'$ be a $p$-extracted graph of $G$. In particular, $\chi(G')\geq \chi(G)/2^p$.
        By \Cref{lem:find-disjoint-complete-bip}, one of the following three cases holds for $G'$.
        \begin{enumerate}[label = \rm (G\arabic*)]
            \item \label{itm:case-a}
                there are $\ell$ pairwise vertex-disjoint induced copies of $K_{\ell, \ell}$ with no edges between them,
            \item \label{itm:case-b}
                there is an induced subgraph $H \subseteq G'$ with $\chi(H) \ge \chi(G') /(2\ell^2) - 1$ and $\omega(H) < \omega(G')$,
            \item \label{itm:case-c}
                there is an induced subgraph $H \subseteq G'$ with $\chi(H) \ge \chi(G') /(2\ell^2) - 1$ which is $K_{\ell, \ell}$-free.
        \end{enumerate}
        If \ref{itm:case-b} holds, then $\chi(H) \ge \chi(G)/(2^{p+1}\ell^2) - 1 > f(\omega_0)$ and $\omega(H) \le \omega_0$. Thus, by the inductive hypothesis, $H$ contains a cycle with exactly $k'$ chords, for some $k' \in \{k, k+1, k+2, k+3\}$.
        
        If \ref{itm:case-c} holds, then $\chi(H) \ge \chi(G)/(2^{p+1}\ell^2) - 1 > g(\omega_0)$. Since $H$ is $K_{\ell, \ell}$-free in this case, by choice of $g$ there is a cycle with exactly $k$ chords in $H$.
        
        We may now assume that \ref{itm:case-a} holds. In particular, there are 101 copies of $K_{\ell, \ell}$ in $H$ that are pairwise vertex-disjoint with no edges between them. By \Cref{lem:finding-cycle-from-Kll}, using the choice of $G'$ as a $p$-extraction of $G$, there is a cycle in $G$ with exactly $k'$ chords, with $k\leq k'\leq k+3$.
    \end{proof}

    \subsection{Few edges between unimodal paths} \label{subsec:few-edges-unimodal}
    
    For the proof of \Cref{lem:finding-cycle-from-Kll}, we need the following two lemmas that allow us to assume that there are only few edges between two unimodal paths connected to a large induced balanced bipartite subgraph.

	\begin{lem}\label{lem:sqrtk-same-Kll}
	    Let $\ell \gg k$, $p \ge 1$, and let $K$ be an induced copy of $K_{\ell, \ell}$ in a $p$-extraction of a graph~$G$.
	    If $P$ is a unimodal path starting in $K$ and $u$ is a vertex outside of $P$ with an edge into $K \setminus V(P)$, then either there is a cycle in $G$ with exactly $k$ chords or there are at most $8\sqrt k$ edges from $u$ to $P$.
	\end{lem}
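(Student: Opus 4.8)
The plan is to suppose that $u$ has more than $8\sqrt{k}$ neighbours on $P$ and exhibit a cycle with exactly $k$ chords. Write $K=A\cup B$ for the bipartition, let $v_0$ be the end of $P$ lying in $K$, say $v_0\in A$, and write $P=v_0v_1\cdots v_L$. We may assume $u\notin K$: since $K$ is induced in the ambient $p$-extraction, the only vertices of $P$ lying in that $p$-extraction are $v_0$, $v_L$ and (by the defining property of unimodal paths) the end-neighbours $v_1,v_{L-1}$, so every vertex of the $p$-extraction --- in particular any $u\in K$ --- has at most four neighbours on $P$, and the conclusion holds trivially. Thus assume $u\notin K\cup V(P)$, and fix a neighbour $w$ of $u$ in $K\setminus V(P)$.

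List the neighbours of $u$ on $P$ as $v_{i_1},\dots,v_{i_m}$ in the order they occur along $P$ from $v_0$, with $m>8\sqrt k$. At most two of them ($v_{L-1}$ and $v_L$) have index exceeding $L-2$, so for any $a\le m-2$ we have $i_a\le L-2$. Given such an $a$ and a path $R$ inside $K$ from $v_0$ to $w$, form the cycle
\[
    C \;=\; u\,v_{i_a}\,v_{i_a-1}\cdots v_1\,v_0\,R\,w\,u .
\]
Assume for now that the interior of $R$ avoids $V(P)$, avoids $N(u)$, and avoids $N(v_1)$ --- here $v_1$ is the only interior vertex of $P$ lying on the arc $v_0\cdots v_{i_a}$ that can have edges into $K$. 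Then the $P$-arc is chordless ($P$ being induced apart from the edge $v_0v_L$, and $i_a\le L-2$), the neighbours of $u$ on $C$ are exactly $v_{i_1},\dots,v_{i_a}$ and $w$, of which only $v_{i_a}$ and $w$ are cycle edges, and the neighbours of $v_0$ on $C$ outside its cycle-neighbours all lie in $R$ and hence are counted among the internal chords of $R$. Consequently $C$ has precisely $(a-1)+c_R$ chords, where $c_R$ denotes the number of internal chords of $R$. For a suitable integer $t$ one can take $R$ of length $2t-1$ when $v_0,w$ lie in opposite parts of $K$ (then $c_R=(t-1)^2$) or of length $2t-2$ when they lie in the same part (then $c_R=(t-1)(t-2)$); as $t$ grows $c_R$ runs through $\{(t-1)^2:t\ge1\}$ or $\{(t-1)(t-2):t\ge2\}$, sequences whose consecutive terms near $k$ differ by at most $2\sqrt k+O(1)$. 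Choosing $t$ maximal with $c_R\le k$ therefore gives $0\le k-c_R\le 2\sqrt k+O(1)\le m-3$ for $k$ large (using $m>8\sqrt k$), and putting $a=k-c_R+1$ yields a cycle with exactly $k$ chords; moreover $t=O(\sqrt k)$, so $R$ only needs $O(\sqrt k)$ vertices from each part of $K$.

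The substantive part of the argument --- and the reason the bound is $8\sqrt k$ rather than the $2\sqrt k$ the ideal case would require --- is discharging the cleanliness assumptions on $R$. Since $|V(P)\cap K|$ is bounded (a path that is induced apart from one edge can meet $K_{\ell,\ell}$ in only boundedly many vertices) and $\ell\gg k$, the sole obstruction is that a vertex $z\in\{u,v_1\}$ might be adjacent to all but few vertices of one or both parts of $K$, so that $R$ cannot avoid $N(z)$. When $z$ has at least $t$ non-neighbours in each part, one can still select $R$ so that the number of forced $z$--$R$ edges is a controlled quantity and fold those extra chords into the count by sacrificing a bounded number of $u$'s $P$-neighbours. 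When $z$ is adjacent to all but $O(1)$ vertices of both parts, one instead builds the bulk of the cycle inside $K\cup\{z\}$: a cycle $z\,b_1a_1b_2a_2\cdots b_ta_t\,z$ through $t$ vertices of each part has, over all $t$ and all choices of which $a_i,b_i$ lie in $N(z)$, a chord count forced onto a sparse set with gaps $\approx\sqrt k$ --- but supplementing it with a short arc of $P$ through $u$'s neighbours closes those gaps, again using $m>8\sqrt k$. I expect the genuinely delicate case to be when $u$ and $v_1$ are simultaneously adjacent to most of $K$, possibly on the same parts; carrying out this case split together with all the chord bookkeeping is where the bulk of the technical work lies, and is presumably why the statement carries the comfortable constant $8$.
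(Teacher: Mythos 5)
Your core construction is the right one and matches the paper's: take a path $Q$ of length about $2\sqrt{k}$ inside $K$ from the end $v_0=x$ of $P$ to a neighbour $w$ of $u$ in $K\setminus V(P)$, close it through a prefix of $P$ and the vertex $u$, and use the choice of how many of $u$'s $P$-neighbours to include as the fine adjustment of the chord count. However, as written the proof is incomplete at exactly the point you flag as ``the substantive part'': you require the interior of $R$ to avoid $N(u)$ and $N(v_1)$, observe that this may be impossible, and then only sketch two workarounds (one of which is an entirely different construction inside $K\cup\{z\}$ whose chord bookkeeping you do not carry out), conceding that ``the bulk of the technical work'' remains. That is a genuine gap: the statement is not proved for the cases you identify as delicate.

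The gap is easy to close, and the paper's proof shows that the whole case analysis is unnecessary. One never needs $R$ to avoid $N(u)$ or $N(v_1)$: since $R$ has only $O(\sqrt{k})$ vertices, the edges from $u$ and from $v_1=x^-$ into $V(R)$ contribute at most $2|V(R)|\le 4a+2$ chords \emph{no matter what the adjacency pattern is}, and this quantity is a fixed number once $R$ is chosen. Concretely, with $a=\lfloor\sqrt{k}-2\rfloor$ and $|V(R)|\in\{2a,2a+1\}$, the number $k''$ of chords of the path $u\,w\,R\,v_0\,v_1$ satisfies $(a-1)^2\le k''\le a^2-a+(4a+2)-2\le(a+2)^2\le k$ and $k''\ge k-8\sqrt{k}$; setting $b=k-k''\in[0,8\sqrt{k}]$ and extending along $P$ to the $(b+1)$-th neighbour of $u$ gives exactly $k$ chords. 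So the slack of $8\sqrt{k}$ (versus your ideal $2\sqrt{k}$) is there precisely to absorb these $O(\sqrt{k})$ uncontrolled edges uniformly, not to accommodate a case split; your ``fold the extra chords into the count'' remark is the whole argument, and it works in every case, including when $u$ or $v_1$ is adjacent to almost all of $K$.
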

	\begin{proof}
        Let $x$ and $y$ be the ends of $P$ with $x\in V(K)$ and let $x^-$ and $y^-$ be the unique neighbours of $x$ and $y$ in $P$. By unimodality, vertices in $K$ do not send any edges to $P \setminus \{x, x^-, y, y^-\}$. Let $w$ be a neighbour of $u$ in $K$ distinct from $x$, which exists by the assumption on $u$.
        
        \begin{figure}[h]
            \centering
            \includegraphics{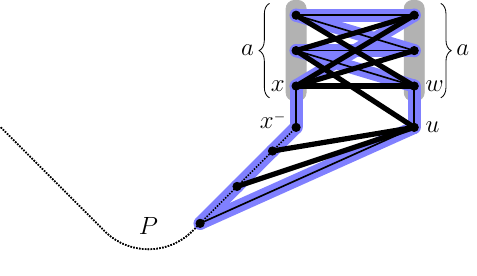}
            \vspace{-.3cm}
            \caption{\Cref{lem:sqrtk-same-Kll}}
            \label{fig:sqrtk-same-Kll}
        \end{figure}
        
        Suppose that $u$ sends more than $8\sqrt{k}$ edges to $P$. Let $Q$ be a path in $K$ with ends $x$ and $w$ on $2a$ or $2a+1$ vertices, where $a = \floor{\sqrt{k} - 2}$. 
        Let $k'$ be the number of chords in the path $Q$. 
        Then $k'$ is between $a^2 - (2a-1) = (a-1)^2$ and $a(a+1) - 2a = a^2 - a$. In particular,
        \begin{align*}
            k' 
            \ge (a-1)^2 \ge (\sqrt{k} - 4)^2 \ge k - 8\sqrt{k}. 
        \end{align*}
        Let $k''$ be the number of chords in the path $uwQxx^-$. Then, as $e_G(\{x^-, u\}, V(Q))\leq 2(2a+1)$, 
        \begin{align*}
            k'' = k'+e_G(\{x^-, u\}, V(Q)) - 2 \leq a^2 + 3a \le (a+2)^2 \le k.
        \end{align*}
        Take $b = k - k''$, so that $0 \le b \le 8\sqrt{k}$. Let $P'$ be the subpath of $P$ that starts at $x$ and ends at the $(b+1)$-th neighbour $u_b$ of $u$ in $P \setminus x$. Then $u w Q x P' u_b u$ is a cycle with exactly $k$ chords. 
    \end{proof}
    
    \begin{lem} \label{lem:sqrtk-different-Kll}
        Let $\ell \gg k \gg 1$, $p \ge 3$, and let
        $K$ and $K'$ be vertex-disjoint induced copies of $K_{\ell, \ell}$, with no edges between them, in a $p$-extraction of a graph $G$.
        For a unimodal path $P$ that starts in $K$ and is not in the last two layers, let $u$ be a vertex with a neighbour $w \in K' \setminus V(P)$. Then either there is a cycle in $G$ with exactly $k$ chords, or $u$ sends at most $30\sqrt{k}$ edges to $P$.
    \end{lem}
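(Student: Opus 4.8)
The plan is to mimic the proof of \Cref{lem:sqrtk-same-Kll}. Assume $u$ sends more than $30\sqrt{k}$ edges to $P$; we must produce a cycle with exactly $k$ chords. Write $x$ for the endpoint of $P$ in $K$, write $x^-$ for its unique neighbour on $P$, and let $y$ be the other end of $P$. Since $30\sqrt{k}>8\sqrt{k}$, \Cref{lem:sqrtk-same-Kll} applied with the copy $K$, the path $P$ and the vertex $u$ lets us assume that $u$ has no edge to $K\setminus V(P)$; as the interior of $P$ is disjoint from $G_p\supseteq K$, this means $u$ has at most the two edges $ux$ and (possibly) $uy$ into $K$.

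The cycle will be assembled from four pieces: (i) a path $R$ inside $K'$ on about $2\sqrt{k}$ vertices, which supplies the bulk $k'_R$ of the chords; (ii) the subpath $P[x,u_b]$ of $P$, where $u_b$ is the $b$-th neighbour of $u$ along $P$ starting from $x$, which supplies $\approx b$ chords (the edges $uu_1,\dots,uu_{b-1}$); (iii) the vertex $u$, attached by the edges $uw$ and $u u_b$; and (iv) a bridging unimodal path $S$ joining $K'$ back to $x$. Such a bridge exists precisely because $P$ is not in one of the last two layers: there is a layer strictly below that of $P$, and in it we take a unimodal path $S$ with ends $x$ and some $w''\in K'\setminus V(P)$, while $R$ is chosen to run from $w$ to $w''$ inside $K'$ and to avoid $V(P)$ (possible since $\ell\gg k$). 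The resulting cycle is
\[
C:\qquad w\ \xrightarrow{\,R\,}\ w''\ \xrightarrow{\,S\,}\ x\ \xrightarrow{\,P[x,u_b]\,}\ u_b\ \xrightarrow{\,u_b u\,}\ u\ \xrightarrow{\,u w\,}\ w .
\]

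Counting chords: exactly as in \Cref{lem:sqrtk-same-Kll}, the dense contribution $k'_R$ from $R$ can be made to equal any value in an interval of length $\Theta(\sqrt{k})$ reaching up to roughly $k$, by choosing $|V(R)|\in\{2a,2a+1,\dots\}$ with $a\approx\sqrt{k}$, and the number of chords at $u$ inside $P[x,u_b]$ equals $b$ up to an additive constant, with $b$ free up to essentially the number of edges from $u$ to $P$. All cross-edges between distinct pieces of $C$ are $O(\sqrt{k})$ in number: edges between $R$ and $S$, between $R$ and $P[x,u_b]$, and between $R$ and $u$ are each at most $|V(R)|=O(\sqrt{k})$ (using that $S$ is induced and, by its unimodality, its interior apart from the neighbours of its ends has no edges into $G_p\supseteq K\cup K'$; using \Cref{obs:unimodal} for $P$ versus $S$; and using the reduction on $u$'s edges into $K$); edges between $S$ and $w$, or between $K$-vertices and $P$-vertices, are $O(1)$; and $P[x,u_b]$ has no internal chords since $P$ is induced except at its ends. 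This leaves exactly two error terms that are not a priori small: the edges from $u$ to $S$, and the edges from $x^-$ to $S$.

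Controlling these last two quantities is the crux, and the reason the constant $30$ is larger than the $8$ of \Cref{lem:sqrtk-same-Kll}. Here the key fact is that $S$ is an \emph{induced} path, so if $u$ (resp. $x^-$) has at least $k+2$ neighbours on $S$, then the subpath of $S$ between the first and the $(k+2)$-th of these neighbours, together with $u$ (resp. $x^-$), is already a cycle with exactly $k$ chords and we are done. In the remaining range one must argue that the contribution of $S$ can be absorbed into the tuning, for instance by choosing the bridge among the several candidates available (one per layer below $P$, whose interiors lie in pairwise disjoint sets $V(G_{j-1})\setminus V(G_j)$, so these edge counts cannot all be large), or by keeping only a suitable subpath of $S$ on the cycle and re-routing the missing connection through $K$ and $K'$ with the freedom $\ell\gg k$. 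Once every error term is known to be $O(\sqrt{k})$, we finish by first picking $R$ so that $k'_R$ is within $O(\sqrt{k})$ of $k$ minus the total error, then picking $b$ to land on exactly $k$; since $b=O(\sqrt{k})$ and $u$ has more than $30\sqrt{k}$ edges to $P$ — with $30$ chosen to exceed the sum of the hidden constants — such a $b$ exists. I expect this absorption step for the bridge, in the intermediate range where $u$ or $x^-$ has between $\Theta(\sqrt{k})$ and $k+1$ edges to $S$, to be the main obstacle, as it is the only place where the argument genuinely departs from that of \Cref{lem:sqrtk-same-Kll}.
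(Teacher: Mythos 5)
Your overall architecture matches the paper's: a dense path inside one of the two bicliques supplying $\approx k - O(\sqrt k)$ chords, a bridging unimodal path in a deeper layer joining $K'$ back to $K$, and the initial segment of $P$ up to the $b$-th neighbour of $u$, with $b$ tuned last to land on exactly $k$. However, the step you flag as the main obstacle --- bounding the number of edges from $u$ and from $x^-$ to the bridge when that number lies between $\Theta(\sqrt k)$ and $k+1$ --- is a genuine gap, not a routine absorption. Your fan argument disposes of the case of at least $k+2$ neighbours on the (induced) bridge, but neither of your suggested fixes for the intermediate range (choosing among several candidate bridges, or truncating the bridge and re-routing) is carried out, and it is not clear either one works: nothing prevents $u$ from having, say, $k/2$ neighbours on \emph{every} candidate bridge, since these bounds do not aggregate across layers.

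The missing idea is to apply \Cref{lem:sqrtk-same-Kll} \emph{to the bridge itself}. This is why the paper routes the bridge $Q$ between a vertex $z\in K$ with $z\neq x$ and a vertex $v\in K'$ on the side opposite to $w$, rather than directly into $x$ as you do: then $x^-$ is a vertex outside $Q$ with a neighbour ($x$) in $K\setminus V(Q)$, and $u$ is a vertex outside $Q$ (guaranteed by taking $Q$ in a layer different from that of $u$) with a neighbour ($w$) in $K'\setminus V(Q)$, so \Cref{lem:sqrtk-same-Kll} bounds each of $e(x^-,Q)$ and $e(u,Q)$ by $8\sqrt k$ --- exactly the $O(\sqrt k)$ control you need, and the source of the constant $30$. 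Your choice of endpoint $x$ for the bridge actively obstructs this fix for $x^-$, since the only neighbour of $x^-$ in $K$ that you are guaranteed, namely $x$, would then lie on the bridge. The gap $z$-to-$x$ is closed inside $K$ by the dense path $Q'$ (which the paper therefore places in $K$, using a single edge $wv$ inside $K'$); with that modification the rest of your counting goes through.
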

    
    \begin{proof}
        Let $x, y$ be the ends of $P$, where $x \in K$, and let $x^-,y^-$ be the neighbours of $x,y$ in $P$.
        Take $z$ to be any vertex in $K$ other than $x$ and take $v$ to be any vertex in $K'$ in the opposite side to $w$. 
        Let $Q$ be a unimodal path with ends $z$ and $v$ from a later layer than $P$ and a different layer than $u$, and let $z^-, v^-$ be the neighbours of $z, v$ in $Q$, respectively. 
        By choice of $Q$, there are no edges between $P \setminus \{x, x^-, y, y^-\}$ and $Q$ (see \Cref{obs:unimodal}).

        \begin{figure}[h]
            \centering
            \includegraphics{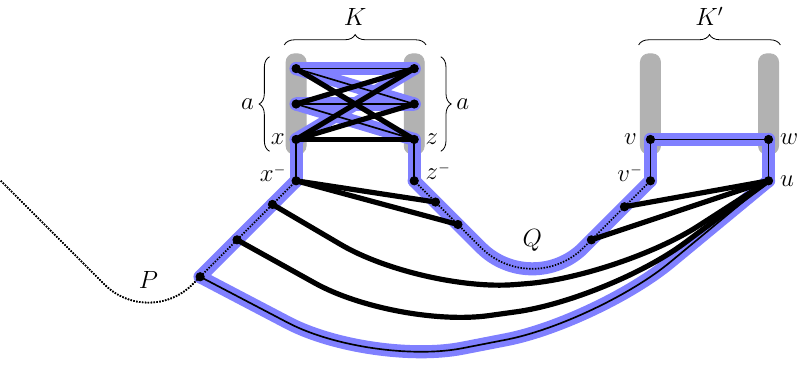}
            \caption{\Cref{lem:sqrtk-different-Kll}}
            \label{fig:sqrtk-diff-Kll}
        \end{figure}
        
        Suppose that there is no cycle with exactly $k$ chords and that $u$ has more than $30 \sqrt{k}$ neighbours in $P$.
        By \Cref{lem:sqrtk-same-Kll}, there are at most $8\sqrt{k}$ edges between $x^-$ and $Q$ and at most $8\sqrt{k}$ edges between $u$ and $Q$.
        Let $Q'$ be a path in $K$ with ends $x$ and $z$ on either $2a$ or $2a+1$ vertices, where $a = \floor{\sqrt{k} - 13}$.
        Denote by $k'$ the number of chords in the path $R := uw vQz Q' x x^-$, excluding the edge $u x^-$ if it exists. 
        Then 
        \begin{align*}
            k' \ge \,\, & \#\{\text{chords in $Q'$}\} \ge (a-1)^2 \ge (\sqrt{k} - 15)^2 \ge k - 30 \sqrt{k}.
        \end{align*}
        As $x^-, z^-, v^-$, and $u$ are the only vertices in $V(R)$ that may have neighbours in $Q'$,
        \begin{align*}
            k' \leq \,\, & \#\{\text{chords in $Q'$}\} + e_G(\{x^-, z^-, v^-, u\}, V(Q')) + 
             e_G(\{x^-, u\}, V(Q)) + e_G(\{x^-, z^-, v^-, v, u, w\}) \\
            \le \,\, & (a^2 - a) + 4 \cdot (2a + 1) + 16 \sqrt{k} + \binom{5}{2}  
            \le (a + 4)^2 + 16\sqrt{k} \le (\sqrt{k} - 9)^2 + 16\sqrt{k}\\
            = \,\, & k - 2 \sqrt{k} + 81 \le k, 
        \end{align*}
        where the last inequality follows from the assumption that $k$ is large.
        Take $b = k - k'$, so that $0 \le b \le 30 \sqrt{k}$.
        Let $P'$ be the subpath of $P$ that starts at $x$ and ends at the $(b+1)$-th neighbour~$u_b$ of $u$ in $P \setminus \{x, x^-\}$. Then, as there are no edges between $P \setminus \{x, x^-, y, y^-\}$ and $K$, $K'$ or $Q$,
        the cycle $uwvQzQ'xP'u_b u$ has exactly $k$ chords.
    \end{proof}

    \subsection{Proof of \Cref{lem:finding-cycle-from-Kll}}	 \label{subsec:finding-cycle-almost-k}
    
    We now prove \Cref{lem:finding-cycle-from-Kll}. Roughly speaking, the idea is to find many disjoint induced $K_{\ell, \ell}$'s with no edges between them, and join them via unimodal paths, to obtain a cycle whose length we can control by choosing subpaths of the $K_{\ell, \ell}$'s of appropriate lengths. Variants of the arguments used in this proof will appear in the following section.

	\begin{proof} [Proof of \Cref{lem:finding-cycle-from-Kll}]
	Let $K_0, \ldots, K_{100}$ be a collection of vertex-disjoint induced copies of $K_{\ell, \ell}$ in the $p$-extraction of $G$ such that there are no edges across distinct copies. Denote the bipartition of $K_i$ by $\{U_{i, 1}, U_{i, 2}\}$.
	Let $P_{i, j}$ be vertex-disjoint unimodal paths from $U_{i, j}$ to $U_{0, j}$, for $i \in [100]$ and $j \in [2]$.
	We take all the unimodal paths from different layers that are not the last two. 
	Let $u_{i,j}$ be the first internal vertex on $P_{i,j}$ from $K_i$, for each $i \in [100]$ and $j \in [2]$. 
	Denote by $u_{i,j}'$ the unique neighbour of $u_{i,j}$ in $V(P_{i,j})\cap V(K_i)$. That is, the end vertex of $P_{i,j}$ in $K_i$.
	A vertex $u$ is said to be \emph{complete} (resp.\ \emph{anti-complete}) to $U_{i,j}$ if it is adjacent to all (resp.\ none) of the vertices in $U_{i,j}\setminus\{u_{i,j}'\}$, for $i \in [100]$ and $j \in [2]$. 
	
	We first claim that, by possibly shrinking $\ell$ to $\ell'=\ell/2^{200}$, we may assume that each $u_{i,j}$ is either complete or anti-complete to $U_{s,t}$, for each $i, s \in [100]$ and $j, t \in [2]$.
	For each $v\in \bigcup_{s=1}^{100}(U_{s,1}\cup U_{s,2})$, write a $0$-$1$ vector $x_v\in\{0,1\}^{200}$ to encode its adjacency to $u_{i,j}$. That is, the $(i,j)$-coordinate is 1 if $vu_{i,j}\in E(G)$ and 0 otherwise.
	Then each $U_{s, t}\setminus\{u_{s, t}\}$ can be partitioned into at most $2^{200}$ subsets according to the value of $x_v$. Replacing $U_{s, t}$ by the largest amongst these subsets and adding $u_{s, t}$ suffices for our purpose.
	
	Let $v_{i,j}$ be the first internal vertex on $P_{i,j}$ from $K_0$, and let $v_{i,j}'$ be the unique vertex in $V(P_{i, j}) \cap V(K_0)$. 
	By using the same argument possibly shrinking~$\ell$ even further, we may assume that each $v_{i,j}$ is either complete or anti-complete to $U_{s, t}$, for $i, s \in [100]$ and $j, t \in [2]$.
	
	Our plan is to make a cycle in the following way. We will choose integers $a_1,\ldots,a_{100} \ge 0$, depending on $k$ and the structure we have just found. Starting from $v_{1,1}' \in U_{0,1}$, we take $P_{1,1}$ to reach $u_{1,1}'\in U_{1,1}$, then go through a path of length $2a_1+1$ in $K_1$ to reach $u_{1,2}'\in U_{1,2}$. The journey goes back to $K_0$ through $P_{1,2}$. Then we move to $v_{2,1}'$ to iterate. 
	After 100 iterations, we close the cycle by moving from $v_{100,2}'$ to $v_{1,1}'$ by an edge.
	
	\begin{figure}
	    \centering
	    \includegraphics{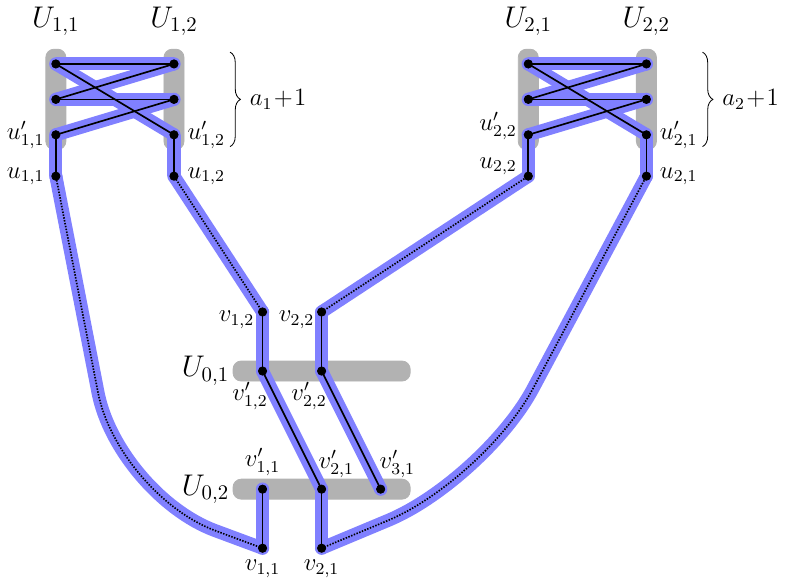}
	    \caption{Part of the cycle $\C(a_1, \ldots, a_{100})$}
	    \label{fig:cycle-from-Kll}
	\end{figure}
	
	Let us calculate how many chords exist in such a cycle, which we call $\C(a_1,\ldots,a_{100})$ (note that the number of chords depends only on $a_1, \ldots, a_{100}$, and not on the choice of the paths in $K_i$ for $i \in [100]$, by previous assumptions). Firstly, a $(2a_i+1)$-edge path in a copy of $K_{a_i,a_i}$ gives $(a_i + 1)^2 - (2a_i+1) = a_i^2$ chords and the path in $K_0$ has at most $100^2$ chords. Second, each $u_{i,j}$ or $v_{i,j}$ that is complete to $U_{s, t}$ contributes $a_s$ chords (to $U_{s, t} \setminus \{u_{s, t}'\}$), and there are at most $100^2$ chords between vertices $u_{i, j}$ or $v_{i, j}$ and the path in $K_0$. Third, each $u_{i,j}$ or $v_{i,j}$ adjacent to $u'_{s,t}$, with $(i,j)\neq(s,t)$, contributes one chord. Finally, there are at most $O(\sqrt{k})$ chords between internal vertices of $P_{i,j}$s (by \Cref{lem:sqrtk-different-Kll,lem:sqrtk-same-Kll}, because otherwise there is a cycle with exactly $k$ chords and then, we are done). 
	Overall, the cycle $\C(a_1, \ldots, a_{100})$ has the following number of chords
	\begin{align}\label{eq:chordcount}
	    \sum_{s=1}^{100} a_s^2 +\sum_{s=1}^{100} t_s a_{s} + O(\sqrt{k}),
	\end{align}
	where $t_s$ is the number of vertices $u_{i, j}$ or $v_{i, j}$ that are complete to $U_{s, 1}$ plus the number of vertices $u_{i, j}$ or $v_{i, j}$ that are complete to $U_{s, 2}$, and $O(\sqrt{k})$ is a function that only depends on the graph structure that we have found, i.e., the 101 induced $K_{\ell,\ell}$'s and the unimodal paths in between.
	
	In~\eqref{eq:chordcount}, the $O(\sqrt{k})$ term and the terms $t_s$, with $s \in [100]$, are all fixed parameters, i.e., they do not depend on the choice of $a_1, \ldots, a_{100}$. Also $t_s \le 400$ for each $s \in [100]$.
	Thus, one may write~\eqref{eq:chordcount} as
	\begin{align} \label{eq:chordcount-f}
	    f_G(k)+\sum_{s=1}^{100}(a_s^2 +t_sa_s),
	\end{align}
	where $f_G(k)=O(\sqrt{k})$ is a parameter depending on $k$ and $G$, but not on $a_1, \ldots, a_{100}$. 
	
	Among $t_1, \ldots, t_{100}$, at least $20$ values are even, or at least $80$ are odd. If the former happens, we may assume $t_1,\ldots,t_{20}$ are the even numbers by relabelling the indices. Then, by \Cref{lem:twenty-squares}, we can choose $b_s\geq 200$ for $s \in [20]$, such that
	\begin{align*}
	    \sum_{s = 1}^{20} b_s^2 = k - f_G(k) + \sum_{s=1}^{20}\frac{t_s^2}{4}.
	\end{align*}
	Indeed, large enough $k$ guarantees the right-hand side is a large enough positive integer that can be expressed as the sum of exactly $20$ squares larger than $200$.
	Now let $a_s = b_s-t_s/2$ for $s \in [20]$ and 0 otherwise (notice that $a_2 \ge 200 - t_s/2 \ge 0$). Then the cycle $\C(a_1,a_2,\ldots,a_{100})$ has exactly $k$ chords, since~\eqref{eq:chordcount-f} becomes
	\begin{align*}
	    f_G(k)+\sum_{s=1}^{100} (a_s^2+t_sa_s)
	    =f_G(k) + \sum_{s = 1}^{20} \left( b_s^2 - \frac{t_s^2}{4}\right) = k.
	\end{align*}
	Otherwise, there are at least $80$ odd $t_s$'s, say $t_1,\ldots,t_{80}$. Let $r$ be the unique integer divisible by $4$ such that 
	\begin{align*}
	    r-3 \le k-f_G(k)+\sum_{s=1}^{80}\frac{t_s^2-1}{4} \le r.
	\end{align*}
	By our choice of $t_1,\dots,t_{80}$, $(t_s^2 - 1)/4$ is an integer for each $s\in[80]$.
	\Cref{lem:sum-skewed-squares} then shows that, since $r$ is a large enough integer divisible by $4$, there exist integers $b_s \ge 300$, for $s \in [80]$, such that
	\begin{align*}
	    r = \sum_{s=1}^{80} b_s(b_s+1).
	\end{align*}
	Now take $a_s = b_s - (t_s - 1)/2$ if $s \in [80]$ (so $a_s \ge 300 - (t_s-1)/2 \ge 0$), and $0$ otherwise. Then the number $k'$ of chords in $\C(a_1,a_2,\ldots,a_{100})$, using \eqref{eq:chordcount-f}, is 
	\begin{align*}
	    k' 
	    & = f_G(k) + \sum_{s=1}^{100}(a_s^2 + t_s a_s) \\
	    & = f_G(k) + \sum_{s=1}^{80}\left(b_s(b_s+1) - \frac{t_s^2-1}{4}\right)
	    = f_G(k) + r - \sum_{s=1}^{80}\frac{t_s^2-1}{4}.
	\end{align*}
	By our choice of $r$, $k \le k' \le k+3$. Therefore, there is a cycle with exactly $k'$ chords for some $k' \in \{k, k+1, k+2, k+3\}$, as required.
	\end{proof}
	
\section{Exact result} \label{sec:exact}
    In this section we will prove \Cref{thm:main}, our main result. To do so, we will prove \Cref{lem:find-cycle-from-Kll-exact} below, which asserts that if a $p$-extraction of a graph $G$ contains $\ell$ pairwise disjoint induced copies of $K_{\ell, \ell}$ with no edges between them, where $p, \ell \gg k \gg 1$, then there is a cycle with exactly $k$ chords. Notice that \Cref{thm:main} can be deduced from this lemma, along with \Cref{cor:bt-induced-bip,lem:find-disjoint-complete-bip}, following the proof of \Cref{thm:main-almost} in \Cref{sec:approximate} and replacing the call to \Cref{lem:finding-cycle-from-Kll} by a call to \Cref{lem:find-cycle-from-Kll-exact}.
    
    \subsection{Overview of the proof}
        The basic idea for the proof of \Cref{lem:find-cycle-from-Kll-exact} is similar to that of \Cref{lem:finding-cycle-from-Kll}. Given $\ell$ induced copies $K_1, \ldots, K_{\ell}$ of $K_{\ell, \ell}$ with no edges between them, we clean them up so that we can build cycles, taking paths $P_i \subseteq K_i$ and connecting them by unimodal paths, in such a way that the number of chords depends only on the lengths of the $P_i$'s. However, as seen in the previous section, in some cases a parity issue may cause this strategy to fail to give the precise desired number of chords. 
        
        To fix this, when cleaning up the $K_i$'s we take into account, for each vertex $u$ in one of the $K_i$'s and each triple $T = \{j_1, j_2, j_3\}$ of layers (with $j_1 > j_2 > j_3$), a $j_1$-father $f_1(u;T)$ of $u$, a $j_2$-father $f_2(u;T)$ of $f_1(u;T)$, and a $j_3$-father $f_3(u;T)$ of $f_2(u;T)$ (recall that in the previous section we considered just one father of one vertex per $K_{\ell, \ell}$). 
        
        \Cref{lem:cleaning-step-one} allows us to assume that there are no edges between $\{f_1(u; T), f_2(u; T), f_3(u;T)\}$ and $K_i$'s not containing $u$. An important step towards the proof of \Cref{lem:cleaning-step-one} is \Cref{lem:manyverticeswithmanyneighboursinKmms}. The latter lemma considers a case where there is a collection of 31 pairwise vertex-disjoint induced $K_{\ell,\ell}$'s with no edges between them, and a collection of 31 vertices that are each complete to each of these $K_{\ell, \ell}$'s, or are each complete to one part of each $K_{\ell, \ell}$ and anti-complete to the other. The proof of this lemma follows the usual scheme of building a cycle and controlling the number of its chords by controlling the number of vertices used from each $K_{\ell, \ell}$. 
        The proof here is, in fact, a bit simpler as instead of connecting the $K_{\ell, \ell}$'s using unimodal paths we can use the 31 vertices.
        
        With the above assumption at hand, namely that there are no edges between the set of parents $\{f_1(u;T), f_2(u;T), f_3(u;T)\}$ and $K_i$'s not containing $u$, we now wish to analyse the interaction between any $K_i$ and the sets $\{f_j(u;T) : u \in K_i\}$, $j = 1, 2, 3$, for any fixed $T$. This is done in \Cref{lem:cases}. The full statement of the lemma is quite long. In short, it allows us to assume that one of six cases holds, each describing a concrete well-structured graph. 
        
        We then turn to the proof of \Cref{lem:find-cycle-from-Kll-exact}, which is the main result of the section. By applying \Cref{lem:cases} to sufficiently many triples of layers, we can assume that one of the six cases mentioned above holds for enough $K_i$'s and enough layers. In four of these cases (which we analyse simultaneously), we build the cycles as usual and can reach the desired number of chords without encountering parity issues. In the remaining two cases, we require a few small gadgets to adjust the number of chords in case of a parity issue.
        
    \subsection{Removing cross edges}
    
        As mentioned above, our first step towards \Cref{lem:find-cycle-from-Kll-exact} is \Cref{lem:cleaning-step-one} which allows us to assume that, given a collection of induced $K_{\ell, \ell}$'s with no edges between them, there are no edges between parents of one copy of $K_{\ell, \ell}$ and any other copy of $K_{\ell, \ell}$. Before proving \Cref{lem:cleaning-step-one}, we prove the following lemma which considers the other extreme, where there are many `well-connected' vertices that are each fully joined to at least one part of each such $K_{\ell, \ell}$. 
        
        The proof is similar to other proofs in this paper, where we build a cycle using paths from each $K_{\ell,\ell}$. However, instead of using unimodal paths to connect these paths, we use the well-connected vertices, which simplifies the proof.
        
        \begin{lem}\label{lem:manyverticeswithmanyneighboursinKmms}
            Let $\ell \gg k \gg 1$. In a graph $G$, suppose that $K_1, \ldots, K_{31}$ are pairwise vertex-disjoint induced copies of $K_{\ell, \ell}$ with no edges between them. Denote by $\{U_{i,1}, U_{i,2}\}$ the bipartition of $K_i$.
            Let~$X$ be  set of  $31$ vertices such that one of the following holds:
            \begin{enumerate}[label = \rm(\alph*)]
                \item \label{itm:complete-both-sides}
                    every $x \in X$ is complete to $K_i$, for $i \in [31]$ or
                \item \label{itm:complete-one-side}
                    every $x \in X$ is complete to $U_{i,1}$ and anti-compete to $U_{i,2}$, for $i \in [31]$.
            \end{enumerate}
            Then there is a cycle in $G$ with exactly $k$ chords.
        \end{lem}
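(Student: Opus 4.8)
The plan is to build a single cycle that visits all $31$ copies $K_1, \ldots, K_{31}$ in order, using the $31$ vertices of $X$ to travel between consecutive copies, and then count the chords as a polynomial (in fact a quadratic) in the lengths of the paths chosen inside each $K_i$. First I would do a routine cleaning step: since $|X| = 31$ is a fixed constant and $\ell \gg k$, by pigeonhole I can pass to sub-bipartitions $U_{i,1}' \subseteq U_{i,1}$, $U_{i,2}' \subseteq U_{i,2}$ of size still much larger than $k$ such that each $x \in X$ is either complete or anti-complete to each $U_{i,t}'$. In case \ref{itm:complete-both-sides} every $x$ is complete to both $U_{i,1}'$ and $U_{i,2}'$; in case \ref{itm:complete-one-side} every $x$ is complete to $U_{i,1}'$ and anti-complete to $U_{i,2}'$. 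Write $X = \{x_1, \ldots, x_{31}\}$. I will also need to understand the edges inside $X$ and between $X$ and the $K_i$'s, but these contribute only a bounded number $O(1)$ of chords, which we absorb into an additive constant.

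**The cycle.** For each $i \in [31]$ choose an integer $a_i \ge 0$ (to be determined) and let $Q_i$ be a path inside $K_i$ with $2a_i + 1$ vertices, with one endpoint $p_i \in U_{i,1}'$ and the other endpoint $q_i \in U_{i,2}'$ (possible since $\ell$ is huge). Form the cycle
\[
  \mathcal{C}(a_1, \ldots, a_{31}) \;=\; x_1\, p_1\, Q_1\, q_1\, x_2\, p_2\, Q_2\, q_2\, x_3 \,\cdots\, x_{31}\, p_{31}\, Q_{31}\, q_{31}\, x_1,
\]
so that consecutive copies $K_i, K_{i+1}$ are joined through the vertex $x_{i+1}$ (indices mod $31$), each $x_{i+1}$ being adjacent to $q_i \in U_{i,2}'$ and $p_{i+1}\in U_{i,1}'$. (In case \ref{itm:complete-one-side}, $x_{i+1}$ is complete to $U_{i,1}'$ and anti-complete to $U_{i,2}'$; since $q_i \in U_{i,2}'$ we need instead $x_{i+1}$ adjacent to $q_i$, so in that case I take the endpoint of $Q_i$ used to reach $x_{i+1}$ to lie in $U_{i,1}'$ rather than $U_{i,2}'$ — i.e. I orient the paths appropriately so that both endpoints of the connector $x_{i+1}$ land in parts to which it is complete. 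In case \ref{itm:complete-one-side} this forces both endpoints $p_i, q_i$ of $Q_i$ to lie in $U_{i,1}'$, which is fine if $2a_i+1$ is odd — wait, a path with both ends in the same part has an even number of vertices — so instead I take $Q_i$ on $2a_i$ vertices in that case. In either case $Q_i$ has a prescribed vertex count and endpoints in prescribed parts.)

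**Counting chords.** The key point, exactly as in \Cref{lem:finding-cycle-from-Kll}, is that the number of chords of $\mathcal C(a_1,\ldots,a_{31})$ depends only on $a_1,\ldots,a_{31}$, not on which specific vertices of $K_i$ are used, because each $x_j$ is complete-or-anticomplete to each part. A path on $2a_i+1$ vertices inside a complete bipartite graph $K_{\ell,\ell}$ spans $K_{a_i+1,a_i}$, hence carries $(a_i+1)a_i - 2a_i = a_i^2 - a_i$ chords; a path on $2a_i$ vertices with both ends in one part spans $K_{a_i,a_i}$ and carries $a_i^2 - (2a_i - 1) = (a_i-1)^2$ chords. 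Each $x_j$ that is complete to a part $U_{i,t}'$ used by $Q_i$ contributes $|V(Q_i)\cap U_{i,t}'| \in \{a_i, a_i+1\}$ chords; summing over the (boundedly many) $j$ gives a term $\sum_i c_i a_i$ with each $c_i$ a fixed constant, plus an $O(1)$ additive term for edges within $X$ and between non-adjacent $x_j, x_{j'}$ or between $x_j$ and non-incident copies. Thus the total number of chords equals
\[
  C_G \;+\; \sum_{i=1}^{31}\bigl(a_i^2 + c_i a_i\bigr)
\]
for fixed integers $C_G = O(1)$ and $0 \le c_i \le 62$ depending only on the found structure.

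**Hitting $k$ exactly.** Completing the square, $a_i^2 + c_i a_i = (a_i + c_i/2)^2 - c_i^2/4$ when $c_i$ is even, and $a_i^2 + c_i a_i = (a_i + (c_i-1)/2)(a_i + (c_i+1)/2) - (c_i^2-1)/4$ always. Among $c_1,\ldots,c_{31}$ either at least $20$ are even or at least $80$... no — we only have $31$ of them, so at least... $\lceil 31/2\rceil = 16$ have the same parity; this is the spot where $31$ might be too few. I would instead note that the statement only claims the existence of a cycle with exactly $k$ chords when $k$ is arbitrary, and so we actually want a version with enough indices: re-examining, I would run the construction with all $31$ copies but only need a single adjustable summand — since $a_i^2$ alone, as $a_i$ ranges over the integers $\ge 0$, together with a free choice among $31$ of them, and using Lagrange four-squares on a rescaled target, lets us realize any sufficiently large residue. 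Concretely, pick whichever parity class (even $c_i$'s, or odd $c_i$'s) has the larger size; if $\ge 4$ of the $c_i$ are even, set the other $a_i = 0$ and solve $\sum (b_i)^2 = k - C_G + \sum c_i^2/4$ by Lagrange's theorem (taking $b_i$ large by adding a fixed even square to both sides if needed, as in \Cref{lem:twenty-squares}), then $a_i = b_i - c_i/2 \ge 0$; similarly if $\ge 4$ are odd, use that $a_i^2 + c_i a_i = b_i(b_i+1) - (c_i^2-1)/4$ and sums of four numbers of the form $b(b+1)$ (via $4$-squares, as in \Cref{lem:sum-skewed-squares}) realize every large multiple of... — here I need $k - C_G + \sum(c_i^2-1)/4$ to have the right divisibility, which may fail by a bounded amount, and that bounded slack can be absorbed by adjusting one further $a_i$ among the remaining $\ge 27$ indices by $O(1)$, changing the count by a controlled $O(1)$ that I precompensate. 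The $31 = 4 + 27$ split leaves ample room.

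**Main obstacle.** The genuinely delicate point is the last one: arranging that, after completing the square, the residual target is exactly of the form handled by the number-theory lemmas (a large integer, resp.\ a large multiple of $4$ expressible as a sum of $b(b+1)$'s), and that the unavoidable $O(1)$ discrepancies — from edges inside $X$, from $c_i$ of the wrong parity, from divisibility of the rescaled target — can all be mopped up by perturbing a bounded number of the $a_i$'s by bounded amounts while keeping them nonnegative and keeping the chord count formula valid. Everything else (the pigeonhole cleaning, the chord count being a fixed quadratic in the $a_i$, the existence of the paths $Q_i$ of prescribed length and endpoints) is routine given $\ell \gg k$. I expect the write-up to mirror the final part of the proof of \Cref{lem:finding-cycle-from-Kll} closely, with $X$ playing the role that the unimodal connecting paths played there, which is exactly why the authors remark it is "a bit simpler".
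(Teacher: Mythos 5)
Your construction is the same as the paper's: a single cycle threading the copies $K_i$ via the vertices of $X$, with the chord count a quadratic in the path lengths, finished off by the sum-of-squares lemma. However, the execution has two genuine problems, and the second one means the proof is not actually complete.

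First, the parities of your vertex counts are backwards. In a bipartite graph a path whose two endpoints lie in \emph{different} parts has an \emph{even} number of vertices, and a path with both endpoints in the \emph{same} part has an \emph{odd} number of vertices. So ``$Q_i$ with $2a_i+1$ vertices, with one endpoint in $U_{i,1}'$ and the other in $U_{i,2}'$'' is impossible, and your parenthetical ``a path with both ends in the same part has an even number of vertices'' is the wrong way round. Consequently the internal chord counts you write ($a_i^2-a_i$ and $(a_i-1)^2$) are not the ones your construction produces. This is repairable, but since the whole lemma is about hitting $k$ \emph{exactly}, the arithmetic has to be right.

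Second, and more seriously, your ``Hitting $k$ exactly'' step is not a proof: you leave the coefficients $c_i$ as unknowns of unknown parity, start a parity casework, observe yourself that ``$31$ might be too few,'' and then propose an unverified patch (``absorbed by adjusting one further $a_i$ \dots which I precompensate''). The point you are missing is that under the hypotheses of this lemma no cleaning and no parity casework are needed at all: every $x\in X$ is \emph{already} complete or anti-complete to each part of each $K_i$, uniformly, so all the $c_i$ are equal to one explicitly computable number, namely $2t$ in case \ref{itm:complete-both-sides} and $t-1$ in case \ref{itm:complete-one-side}, where $t$ is the number of connector vertices you use. Choosing $t=20$ in case \ref{itm:complete-both-sides} and $t=21$ in case \ref{itm:complete-one-side} (this is exactly why the paper uses $20$ and $21$ rather than all $31$) makes every linear coefficient even \emph{and} matches the count of summands in \Cref{lem:twenty-squares}; completing the square then reduces the problem to writing $k+O(1)$ as a sum of exactly $20$ (resp.\ $21$) large squares, with no divisibility obstruction and no appeal to \Cref{lem:sum-skewed-squares}. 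As written, your argument stalls precisely at the step you flag as the ``main obstacle,'' so the gap is real rather than cosmetic.
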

        
        \begin{proof}
            Suppose first that \ref{itm:complete-both-sides} is the case. Let $x_1, \ldots, x_{20} \in X$ be distinct. Let $x$ be the number of edges in the induced subgraph $G[\{x_1, \dots, x_{20}\}]$.
            For non-negative integers $a_1, \ldots, a_{20}$, let $Q_i$ be a path in $K_i$ of length $2a_i + 1$.
            Then each $x_i$ is adjacent to the ends of $Q_j$ for $i, j \in [20]$. Let $\C$ to be the cycle $x_1 Q_1 x_2 \ldots x_{20} Q_{20} x_1$. 
            \begin{figure}[h]
                \centering
                \includegraphics{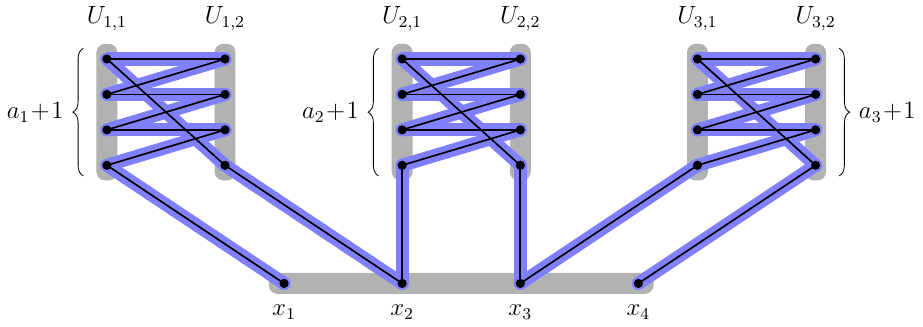}
                \caption{Part of the cycle $\C$ in Case \ref{itm:complete-both-sides}}
                \label{fig:complete-both-sides}
            \end{figure}
            
            We now wish to count the number of chords in $\C$, which only depends on $a_1,\dots,a_{20}$ and $x$. First, by counting the edges induced on $V(Q_i)$, the edges from $x_i$ to $Q_j$, and the edges induced on $\{x_1, \ldots, x_{20}\}$, the number of edges with both ends in $V(\C)$ is 
            \begin{equation*}
                \sum_{i = 1}^{20} (a_i+1)^2 + \sum_{i = 1}^{20} 20 \cdot 2(a_i+1) + x,
            \end{equation*}
            whereas the length of the cycle is $\sum_{i = 1}^{20}(2a_i + 3)$.       Subtracting the cycle length from the number of edges induced on $V(\C)$, the number of chords equals to
            \begin{equation*}
                \sum_{i = 1}^{20} (a_i^2 + 40a_i + 38) +x = 
                \sum_{i = 1}^{20} (a_i + 20)^2 +x- c,
            \end{equation*}
            where $c=20 \cdot (400 - 38)$.
            By using \Cref{lem:twenty-squares} for large enough $k$, there exist integers $b_1, \ldots, b_{20} \ge 20$ such that $\sum_{i = 1}^{20} b_i^2 = k -x+ c$. Given such $b_1, \ldots, b_{20}$, take $a_i = b_i - 20$ for $i \in [20]$. Then the cycle $\C$ has exactly $k$ chords, as required.
            
            Suppose now that \ref{itm:complete-one-side} happens. Let $x_1, \ldots, x_{21} \in X$ be distinct.  Let $x$ be the number of edges in the induced subgraph $G[\{x_1, \dots, x_{20}\}]$. Given integers $a_1, \ldots, a_{20} \ge 0$, let $Q_i$ be a path in $K_i$ of length $2a_i$, with both ends in $U_{i,1}$. Let $\C$ be the cycle $x_1 Q_1 x_2 \ldots x_{21} Q_{21} x_1$. 
            \begin{figure}
                \centering
                \includegraphics{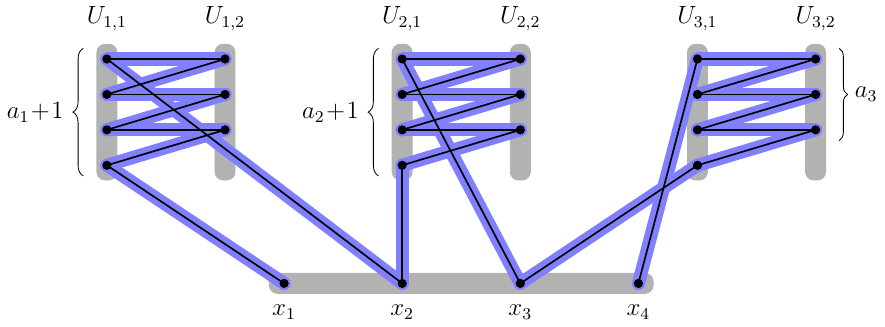}
                \caption{Part of the cycle $\C$ in Case \ref{itm:complete-one-side}}
                \label{fig:complete-one-side}
            \end{figure}
            
            Then the number of edges with both ends in the cycle $\C$ is
            \begin{equation*}
                \sum_{i = 1}^{21} a_i(a_i + 1) + \sum_{i = 1}^{21} 21 (a_i + 1)+x, 
            \end{equation*}
            where the length of the cycle is $\sum_{i = 1}^{21} (2a_i + 2)$.
            Thus, the number of chords in $\C$, obtained by subtracting the number of the $\C$-edges from the number of induced edges on $V(\C)$, is
            \begin{equation*}
                \sum_{i = 1}^{21} (a_i^2 + 20 a_i + 19) +x
                = \sum_{i = 1}^{21} (a_i + 10)^2 +x - c,
            \end{equation*}
            where $c=21 \cdot (100 - 19)$.
            Again by \Cref{lem:twenty-squares}, there exist $b_1, \ldots, b_{21} \ge 10$ such that $\sum_{i = 1}^{20} b_i^2 = k-x+ c$. Given such $b_1, \ldots, b_{21}$, choosing $a_i = b_i - 10$ yields a cycle $\C$ with exactly $k$ chords.
        \end{proof}

        The next lemma allows us to assume that there is a collection of many disjoint induced $K_{\ell,\ell}$'s with no edges between them, such that for many triples of layers $T = \{j_1 > j_2 > j_3\}$ and every vertex $u$ in one of these $K_{\ell,\ell}$'s, there are no edges between $\{f_1(u;T), f_2(u;T), f_3(u;T)\}$ and $K_{\ell,\ell}$'s not containing $u$ (where $f_1(u;T)$ is a $j_1$-father of $u$, $f_2(u;T)$ is a $j_2$-father of $f_1(u;T)$, $f_3(u;T)$ is a $j_3$-father of $f_2(u;T)$, and $T = \{j_1 > j_2 > j_3\}$). 
        
        Roughly speaking, to prove this lemma we choose a collection $\{f_1(u;T), f_2(u;T), f_3(u;T)\}$ for each $u$ and $T$. We then clean up the $K_{\ell,\ell}$'s using Ramsey's theorem to assume that each $f_j(u;T)$ is either complete or anti-complete to each part of each $K_{\ell,\ell}$. Finally, using the previous lemma we may assume that for the vast majority of choices of $u$, $T$ and $K_{\ell,\ell}$-copy $K$, there are no edges between $\{f_1(u;T), f_2(u;T), f_3(u;T)\}$ and $K$. The K\"ovari--S\'os--Tur\'an theorem then allows us to find the required structure.
        
        \begin{lem} \label{lem:cleaning-step-one}
            Let $p, \ell \gg m \gg k \gg 1$ and let $G_p$ be a $p$-extraction of $G$. 
            Suppose that, in $G_p$, $K_1, \ldots, K_{\ell}$ are pairwise vertex-disjoint induced copies of $K_{\ell, \ell}$ with no edges between them. Then either there is a cycle with exactly $k$ chords, or there exist a collection $K_1', \ldots, K_m'$ of pairwise vertex-disjoint induced $K_{m, m}$'s in $G_p$ and a subset $J \subseteq [p]$ of $m$ indices such that, for every triple $T=\{j_1, j_2, j_3\} \subseteq J$ with $j_1 > j_2 > j_3$, every $i \in [m]$ and every $u \in V(K_i')$, there exist vertices $f_{1}(u;T), f_{2}(u;T), f_{3}(u;T)$ as follows:
            \begin{itemize}
                \item $f_{1}(u;T)$ is a $j_1$-father of $u$, 
                \item $f_{2}(u;T)$ is a $j_2$-father of $f_{1}(u;T)$,
                \item $f_{3}(u;T)$ is a $j_3$-father of $f_{2}(u;T)$,
                \item there are no edges between $\{f_{1}(u;T),f_{2}(u;T),f_{3}(u;T)\}$ and $\bigcup_{s \neq i}V(K_{s}')$. 
            \end{itemize}
        \end{lem}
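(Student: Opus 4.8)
The plan is to implement the three-step scheme described just above the statement. First note that the father chains always exist: since $K_1, \dots, K_\ell$ lie in $G_p$, for any $u \in V(K_i)$ and triple $T = \{j_1 > j_2 > j_3\} \subseteq [p]$ we have $u \in G_p \subseteq G_{j_1}$, so $u$ has a $j_1$-father $f_1 \in G_{j_1-1} \subseteq G_{j_2}$, which has a $j_2$-father $f_2 \in G_{j_2-1} \subseteq G_{j_3}$, which in turn has a $j_3$-father $f_3$. So the real content is to select a sub-family of the copies, shrink them to $K_{m,m}$'s, and pick a layer set $J$ of size $m$, so that these chains can be chosen to avoid every foreign copy. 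Throughout we keep a set $J_0 \subseteq [p]$ of at least $m$ still-usable layers and thin it out as needed.

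\emph{Cleanup.} We pass to induced sub-copies and discard some $K_i$'s so as to reach the following situation: there are $M$ surviving copies $K_1, \ldots, K_M$ (each an induced $K_{\ell', \ell'}$ with $\ell'$ still enormous, and $M$ much larger than $m$), with parts $\{U_{i,1}, U_{i,2}\}$, such that every vertex serving as some $f_j(u;T)$, for $u$ in a surviving copy and $T \subseteq J_0$, is complete or anti-complete to each part $U_{s,t}$. This is arranged by Ramsey's theorem, processing the copies one at a time: when processing $K_i$, we first shrink $K_i$ so that the — by then only boundedly many, a function of $m$ and $p$ — relevant father vertices of the already-processed copies are each complete or anti-complete to each part of the shrunk $K_i$, then re-pick the father chains inside the shrunk $K_i$ and symmetrically re-shrink the earlier copies. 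Each shrinking step loses only a factor bounded in $m$ and $p$, and there are at most $M$ steps, so taking $\ell$ sufficiently large (a fixed function of $m$ and $p$) keeps all surviving copies enormous.

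\emph{Killing complete incidences, and extraction.} Call a father $f_j(u;T)$ \emph{bad} if it is complete to a part of some surviving copy not containing $u$; if none of $f_1(u;T), f_2(u;T), f_3(u;T)$ is bad then, being anti-complete to all foreign copies, they send no edges to $\bigcup_{s \neq i} V(K_s)$. Suppose bad fathers are so plentiful that they cannot all be destroyed by deleting a bounded number of copies and layers. Then, iterating the pigeonhole principle — over the owning vertex, the triple, the index in $\{1,2,3\}$, the foreign copy, its relevant part, and (after relabelling the two parts of each copy) the side to which the father is complete — we obtain $31$ vertices $x_1, \dots, x_{31}$ and $31$ surviving copies such that either every $x_t$ is complete to each of these copies, or every $x_t$ is complete to the first part and anti-complete to the second part of each. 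By \Cref{lem:manyverticeswithmanyneighboursinKmms} this yields a cycle with exactly $k$ chords and we are done. Otherwise we pass to a sub-family $K_1, \ldots, K_{m'}$ (with $m' \gg m$) and a set $J \subseteq J_0$ of at least $m$ layers so that no father $f_j(u;T)$ with $u$ in one of these copies and $T \subseteq J$ is bad; shrinking any $m$ of these copies to induced $K_{m,m}$'s and restricting the chains to them gives the required $K_1', \dots, K_m'$ and $J$. (If the cleanup only delivers ``complete, or adjacent to at most an $\eps$-fraction of each part'' — in the spirit of (G4) of \cite{bousquet-thomasse} — one instead restricts each part to the common non-neighbourhood of the finitely many relevant fathers and invokes \Cref{thm:kovari-sos-turan} to recover a $K_{m,m}$; this is where the K\H{o}v\'ari--S\'os--Tur\'an theorem enters.)

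The main obstacle is the combined bookkeeping. We must run the Ramsey cleanup so that the surviving copies stay vastly larger than $K_{m,m}$ despite a naive estimate that would shrink them to nothing — this is what forces the one-copy-at-a-time processing and the hierarchy $p, \ell \gg m \gg k$ — and we must set up the pigeonholing in the ``bad fathers plentiful'' case so that it produces \emph{exactly} the hypothesis of \Cref{lem:manyverticeswithmanyneighboursinKmms}, namely the same $31$ vertices behaving the same way on the same $31$ copies with consistent sides, and moreover uniformly over all layer-triples, so that a single layer set $J$ and a single sub-family of copies work simultaneously for every triple $T \subseteq J$.
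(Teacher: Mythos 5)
Your overall architecture matches the paper's: father chains exist trivially, a Ramsey-type two-sided shrinking makes every relevant father complete or anti-complete to every part of every foreign copy, and \Cref{lem:manyverticeswithmanyneighboursinKmms} is the tool for the case of many ``complete'' incidences. The cleanup step, though loosely bookkept, is essentially the paper's two-pass scheme and is fine.

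The genuine gap is in your final dichotomy. You claim that either the bad fathers ``cannot all be destroyed by deleting a bounded number of copies and layers'' --- in which case pigeonholing yields the hypothesis of \Cref{lem:manyverticeswithmanyneighboursinKmms} --- or they can be so destroyed. Both branches fail. For the first: \Cref{lem:manyverticeswithmanyneighboursinKmms} needs $31$ vertices each complete to (a fixed part of) each of the \emph{same} $31$ copies. Abundance of bad incidences does not give this: if, say, for every copy $i$ and every triple $T$ some father $f_1(u;T)$ with $u\in K_i$ is complete to one part of $K_{i+1}$ and to nothing else, then every bad father has ``badness-degree'' exactly $1$, no $31$ of them share even $2$ common foreign copies, and yet the bad incidences are spread over all copies and all layers, so they also cannot be killed by deleting boundedly many copies and layers. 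Your pigeonhole over ``the foreign copy'' at best aligns many fathers on a \emph{single} common copy, which is not the hypothesis of the lemma. What is actually needed (and what the paper does) is: (i) form the bipartite incidence graph between fathers and copy-indices, coloured by which part the father is complete to; a monochromatic $K_{\ell',\ell'}$ there gives \Cref{lem:manyverticeswithmanyneighboursinKmms}, so assume there is none; (ii) by bipartite Ramsey the graph is $K_{m,m}$-free, so a Zarankiewicz-type count shows only $m\binom{b}{m}$ fathers have degree $\ge m$, and one chooses the layer set $J$ so that no father living in a layer of $J$ is one of these high-degree fathers; (iii) the remaining bad incidences define a digraph on the copies (``$i\to j$ if some father owned by $K_i$ is complete to a part of $K_j$'') of bounded out-degree, in which one takes an independent set of size $m$. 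The selected copies then have no bad incidences \emph{among themselves}, which is all the lemma's conclusion requires; bad incidences pointing to discarded copies are simply tolerated, not destroyed. Your proposal is missing steps (ii) and (iii) entirely, and the ``delete boundedly many copies and layers'' fallback cannot substitute for them.
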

        
    \begin{proof}
        We shall choose $a,b, \ell'$ with $\ell,p\gg M \gg a\gg b\gg m\gg \ell'\gg k$.
        For each $u\in K_1\cup \dots\cup K_{\ell}$ and a triple $T=\{j_1,j_2,j_3\}$ in $[a]$ (with $j_1 > j_2 > j_3$), choose $f_i(u;T)$, for $i\in [3]$, arbitrarily from those vertices such that $f_1(u;T)$ is a $j_1$-father of $u$ and $f_i(u;T)$ is a $j_i$-father of $f_{i-1}(u;T)$ for $i \in \{2, 3\}$.
        Let $F(u)$ be the collection of all $f_i(u;T)$ chosen.
                    
        Let $\{X_i, Y_i\}$ be the bipartition of $K_{i}$. We claim that for $i \in [b]$ we can find subsets $X'_i\subseteq X_i$ and $Y'_i\subseteq Y_i$ of size $m$ so that for all $u\in X'_i\cup Y'_i$ and $j\neq i$, each $v\in F(u)$ is either complete or anti-complete to $X'_j$ and either complete or anti-complete to $Y'_j$. 
    
        This can be done by an analogous technique to the one used in the proof of \Cref{lem:finding-cycle-from-Kll}.
        Namely, for a vertex $w$ and a set $Z$, let $\mathbf{x}(w,Z)\in\{0,1\}^{Z}$ be the $0$-$1$ vector that encodes the adjacency between vertices in $Z$ and $w$. That is, the corresponding coordinate to $v\in Z$ is $1$ if $w$ and $v$ are adjacent and 0 otherwise.
        Then one can partition $X_i$ (resp.\ $Y_i$) according to the values of $\mathbf{x}(w,Z)$ with $w\in X_i$ (resp.\ $X_i$). By choosing the largest subset in the partition, we have subsets $X_i'\subseteq X_i$ and $Y_i'\subseteq Y_i$ such that $|X_i'|\geq 2^{-|Z|}|X_i|$ and $|Y_i'|\geq 2^{-|Z|}|Y_i|$, and each $z\in Z$ is either complete or anti-complete to each $w\in X_i'\cup Y_i'$.
    
        First, choose arbitrary subsets $X_b^1\subseteq X_b$ and $Y_b^1\subseteq Y_b$ with $|X_b^1| = |Y_b^1| = M$. Then for each $i<b$, it is possible to choose $X_i^1 \subseteq X_i$ and $Y_i^1\subseteq Y_i$ of size $M$ such that each vertex $v\in F(u)$, with $u\in X_j^1\cup Y_j^1$ and $j>i$, is either complete or anti-complete to $X_i^1\cup Y_i^1$ (indeed, the previous paragraph tells us that there exist such subsets of size at least $\ell \cdot 2^{-a^3 b M} \ge M$).
        
        Next, we repeat in the reverse order. That is, starting with $i = 1$, we choose subsets $X_i' \subseteq X_i^1$ and $Y_i'\subseteq Y_i^1$ of size $m$ such that each vertex $v\in F(u)$, with $u\in X_j'\cup Y_j'$ and $j<i$, is either complete or anti-complete to $X_i'\cup Y_i'$ (this is possible since we are guaranteed such subsets of size at least $M \cdot 2^{-a^3 b m} \ge m$). The sets $X_i'$ and $Y_i'$ satisfy the requirements.
    
        We then collect all `fathers' $v\in F(u)$ for some $u\in X_i'\cup Y_i'$.
        That is, we set $F:=\bigcup_{i=1}^{b}\bigcup_{u\in X'_i\cup Y'_i}F(u)$.
        Consider the auxiliary bipartite graph $B$ between $F$ and $[b]$ where $v\in F$ and $j\in [b]$ are adjacent whenever $v$ is complete to  $X'_j$ or $Y'_j$. Colour each edge $(v,j)$ by red, blue, or green if $v$ is complete to $X'_j$ only, $Y'_j$ only, or both $X'_j\cup Y'_j$, respectively.
        By Lemma~\ref{lem:manyverticeswithmanyneighboursinKmms}, if there is no cycle with exactly $k$ chords, then $B$ contains no monochromatic  $K_{\ell', \ell'}$. Since $m\gg \ell'$, the bipartite Ramsey Theorem tells us that $B$ contains no copy of $K_{m,m}$. 
        In particular, there are at most $m\binom bm$ vertices $f\in F$ with $\deg_B(f)\geq m$. Since $a\gg m\binom bm+m$, we can choose a set $J$ of $m$  indices so that for $j\in J$, all $j$-fathers $f\in F$ have $\deg_B(f)< m$. 
         For each $u\in \bigcup_{i=1}^bX_i'\cup Y_i'$, let $F'(u)\subseteq F(u)$ be the subset of fathers $f_i(u;T)$ with $i \in \{1,2,3\}$ and $T\in\binom{J}{3}$.
         Note that $|F'(u)|\leq |J|^3\leq m^3$.
        
        Consider an auxiliary directed graph $D$ on $[b]$ where we join $i$ to $j$ if there is some $u\in X'_i\cup Y'_i$ and $v\in F'(u)$ with $v$ complete to $X'_j$ or $Y'_j$. 
        As $\big|\bigcup_{u\in X'_{i}\cup Y'_i}F'(u)\big|\leq |X_i'\cup Y_i'|m^3 = 2m^4$ for each $i$ and each $f\in F'(u)$ is adjacent to at most $m$ of the sets $X_j\cup Y_j$, the digraph $D$ has maximum out-degree at most $2m^5\leq b/(2m+1)$. 
        Therefore, $D$ has an independent set $I$ of size $m$. Letting $K'_1, \dots, K'_m$ be the complete bipartite graphs induced by $X'_i\cup Y'_i$ for $i\in I$, we get a set of complete bipartite graphs satisfying the lemma. 
        \end{proof}
        
    \subsection{Analysing the structure of a complete bipartite subgraph and its parents}
    Our next task is to prove \Cref{lem:cases}, which will be used to analyse the interaction between a complete bipartite subgraph and its parents, `grandparents' and `great-grandparents'.
        
    The following simple lemma will be useful in what follows.
    \begin{lem} \label{obs:induced-matching}
        Let $G_2$ be a bipartite graph on the bipartition $A\cup B$, where $|A| = |B| = m$, and let $G_1$ be its subgraph. Suppose that $d_{G_1}(a) \ge 1$ for every $a \in A$ and $d_{G_2}(b) \le d$ for every $b \in B$.
        Then there exist $A' \subseteq A$ and $B' \subseteq B$, such that $|A'| = |B'| \ge m/4d^2$ and each induced subgraph $G_i[A', B']$, $i=1,2$, is a perfect matching. 
    \end{lem}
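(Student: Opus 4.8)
The plan is to pick, for each $a\in A$, a single $G_1$-neighbour $\phi(a)\in B$ (possible since $d_{G_1}(a)\ge 1$), and then to pass to a subset $A'\subseteq A$ on which $\phi$ is injective and ``conflict-free'', where I call distinct $a,a'\in A$ \emph{in conflict} if $a\in N_{G_2}(\phi(a'))$ or $a'\in N_{G_2}(\phi(a))$. Given such an $A'$, I would set $B':=\phi(A')$; then $M:=\{a\phi(a):a\in A'\}$ is a perfect matching on $A'\cup B'$, and I claim $E(G_2[A',B'])=M$. Indeed, any edge of $G_2[A',B']$ has the form $ab$ with $a\in A'$ and $b=\phi(a')\in B'$ for some $a'\in A'$, so $a\in N_{G_2}(\phi(a'))$; since $A'$ is conflict-free this forces $a=a'$, i.e.\ $ab\in M$. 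As $M\subseteq E(G_1[A',B'])\subseteq E(G_2[A',B'])=M$, both $G_1[A',B']$ and $G_2[A',B']$ are equal to the perfect matching $M$.

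To find $A'$, I would form the \emph{conflict graph} $H$ on vertex set $A$, joining distinct $a,a'$ exactly when they are in conflict, and take $A'$ to be a large independent set in $H$. The key estimate is that $H$ is sparse, and this is precisely where the hypothesis $d_{G_2}(b)\le d$ is used: for a fixed $a'$, the number of $a\ne a'$ with $a\in N_{G_2}(\phi(a'))$ is at most $|N_{G_2}(\phi(a'))|-1=d_{G_2}(\phi(a'))-1\le d-1$, since $a'\in N_{G_2}(\phi(a'))$ (as $\phi(a')\in N_{G_1}(a')\subseteq N_{G_2}(a')$). Summing over $a'\in A$, the number of ordered pairs in conflict is at most $(d-1)m$, so $e(H)\le (d-1)m$ and the average degree of $H$ is at most $2d-2$. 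By Tur\'an's theorem (equivalently the Caro--Wei bound, or a one-pass greedy argument), $H$ has an independent set $A'$ with $|A'|\ge m/(2d-1)\ge m/(4d^2)$, using $d\ge 1$. Moreover $\phi$ is automatically injective on $A'$: if $\phi(a)=\phi(a')$ with $a\ne a'$ in $A'$, then $a\in N_{G_1}(\phi(a'))\subseteq N_{G_2}(\phi(a'))$, so $a,a'$ are in conflict, contradicting independence. Hence $|A'|=|B'|\ge m/(4d^2)$, as required.

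I do not expect any genuine obstacle here; the proof is a short counting argument. The two points to keep in mind are that the degree bound on the $B$-side is exactly what makes the conflict graph sparse (and hence what controls the loss in the size of $A'$), and that the inclusion $G_1\subseteq G_2$ makes the statement about $G_1[A',B']$ automatic once it has been verified for $G_2[A',B']$.
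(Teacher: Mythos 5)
Your proof is correct, and it takes a genuinely different route from the paper's. The paper argues by maximality: it first restricts to the set $A_0$ of vertices of $A$ with $G_2$-degree at most $2d$ (which has size at least $m/2$ by a Markov-type count using $e(G_2)\le dm$), takes a maximal pair $(A',B')$ inducing perfect matchings in both $G_1$ and $G_2$, and observes that any vertex of $A_0\setminus A'$ must have a $G_2$-neighbour in $N_{G_2}(A')$ (otherwise the pair could be extended), so that $A_0\subseteq N_{G_2}(N_{G_2}(A'))$ and hence $m/2\le |A_0|\le 2d^2|A'|$. You instead fix a choice function $\phi$ into $G_1$-neighbourhoods, encode the obstruction to inducedness as a conflict graph on $A$, bound its edge count by $(d-1)m$ using only the $B$-side degree hypothesis, and extract an independent set via Tur\'an/Caro--Wei. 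Your route avoids the preliminary degree truncation on the $A$-side entirely and yields the stronger bound $m/(2d-1)$, which of course implies the stated $m/(4d^2)$; all the verifications (that independence in the conflict graph forces $E(G_2[A',B'])$ to equal the matching $M$, that $\phi$ is automatically injective on $A'$, and that $M$ is then also all of $E(G_1[A',B'])$ by the sandwich $M\subseteq E(G_1[A',B'])\subseteq E(G_2[A',B'])=M$) are sound. The only thing either approach needs implicitly is $d\ge 1$, which is forced by $d_{G_1}(a)\ge 1$.
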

    \begin{proof}
        Let $A_0 := \{a \in A: d_{G_2}(a) \le 2d\}$. Then $e(G_2) \le d|B|$ implies $|A \setminus A_0| \le e(G_2)/2d \le |B|/2 = m/2$ and hence, $|A_0| \ge m/2$.
        Let $A' \subseteq A_0$ be a maximal subset such that there exists $B' \subseteq B$ such that each $G_i[A', B']$, $i=1,2$, induces a perfect matching. We claim that every vertex in $A_0 \setminus A'$ has a $G_2$-neighbour in $N_{G_2}(A')$. Indeed, if $a \in A_0 \setminus A'$ has no $G_2$-neighbour in $N_{G_2}(A')$ then it has a $G_1$-neighbour $b \in B \setminus N_{G_2}(A')$, which makes each $G_i[A' \cup \{a\}, B' \cup \{b\}]$, $i=1,2$, a perfect matching. This contradicts the maximality of $A'$.
        Thus, $A_0 \subseteq N_{G_2}(N_{G_2}(A'))$, which implies $|A_0| \le d |N_{G_2}(A')| \le 2d^2|A'|$. As $|A_0| \ge m/2$, $|B'| = |A'| \ge m/4d^2$, as desired.
    \end{proof}
    
    \begin{lem}\label{lem:cases}
        For $\ell \gg \omega$, let $K$ be an induced copy of $K_{\ell,\ell}$ with bipartition $X\cup Y$.
        Suppose that $R, S, T$ are pairwise disjoint sets outside $K$ such that every vertex in $K$ has a neighbour in~$R$, every vertex in $R$ has a neighbour in $S$, and every vertex in $S$ has a neighbour in $T$. 
        Let $U:=R\cup S\cup T$ and $\ell' := \log \ell / 10000$.

        Then there exist sets $X'$ and $Y'$ of size $\ell'$, each of which contained in a different set amongst $X$ and $Y$, sets $A, A', A'' \subseteq U$, each of which contained in a different set amongst $R, S, T$, and vertices $a, b \in U$ such that one of the following holds:
        \begin{enumerate} [label = (K\arabic*)]
            \item \label{itm:a}
                The vertex $a$ is complete to $X' \cup Y'$.
            \item \label{itm:b}
                The vertex $a$ is complete to $X'$ and anti-complete to $Y'$, and $b$ is complete to $Y'$ and anti-complete to $X'$. 
            \item \label{itm:c}
                Both the induced bipartite graphs $G[A, X']$ and $G[A', Y']$ are perfect matchings, $A$ is anti-complete to $Y'$, and $A'$ is anti-complete to $X'$.
                
            \item \label{itm:d}
                The induced bipartite graph $G[A, X']$ is a perfect matching and $A$ is complete to $Y'$.
            \item \label{itm:e}
                The induced bipartite graph $G[A, X']$ is a perfect matching, $A$ is anti-complete to $Y'$, $A'$ is anti-complete to $X'$ and either complete or anti-complete to $Y'$, every vertex in $A$ has a neighbour in $A'$, and $b$ is complete to $Y'$ and anti-complete to $X'$.
                
            \item \label{itm:f}
                Each component in $G[A, A', A'', X']$ consists of four vertices $u, u', u'', x$ in $A, A', A'', X'$, respectively, such that all pairs in $\{u, u', u'', x\}$, except for possibly $uu''$, form edges. Additionally, $A \cup A' \cup A''$ is anti-complete to $Y'$, and $b$ is complete to $Y'$ and anti-complete to $X'$.
                
        \end{enumerate}
    \end{lem}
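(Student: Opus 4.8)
The plan is to run a long case analysis built from two elementary moves, applied at the three interfaces $K$--$R$, $R$--$S$ and $S$--$T$. The first, a \emph{domination-or-matching} move, takes a bipartite graph $H$ on $P\cup Q$ in which every vertex of $P$ has a neighbour in $Q$ and, for a suitable threshold $\tau$, splits into: (i) some $q\in Q$ has at least $\tau$ neighbours in $P$ (a \emph{dominator}: keep $q$ together with $\tau$ of its neighbours), or (ii) every vertex of $Q$ has fewer than $\tau$ neighbours in $P$, whence \Cref{obs:induced-matching} (applied with $G_1=G_2=H$) yields $P'\subseteq P$ and $Q'\subseteq Q$ of size $\ge |P|/4\tau^2$ inducing a perfect matching. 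Taking $\tau$ to be a fixed small power of $|P|$ keeps both outputs of size polynomial in $|P|$, so the move can be iterated boundedly often while staying in the polynomial regime; and the hypothesis $\ell\gg\omega$ lets us first replace $P$ by an independent subset (via Ramsey) when $P\subseteq R\cup S\cup T$, so that the extracted matchings are induced in $G$. The second move is a \emph{uniformisation}: given a parent set $D$ and a target $C\subseteq X\cup Y$, the K\H{o}v\'ari--S\'os--Tur\'an theorem (\Cref{thm:kovari-sos-turan}) applied to $G[D,C]$ or to its bipartite complement produces large $D'\subseteq D$, $C'\subseteq C$ with $D'$ complete or anti-complete to $C'$; a single use of bipartite Ramsey achieves this for two targets at once, i.e.\ it fixes the ``type'' of every vertex of $D'$ towards both $X'$ and $Y'$ simultaneously.

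First I would treat the $K$--$R$ interface, running a domination-or-matching move on $G[R,X]$ and, after removing the few $R$-vertices just used, on $G[R,Y]$; this gives, for $X$ and for $Y$ separately, either a dominator in $R$ or a matcher $A\subseteq R$. If $X$ has a dominator $a$ that is also complete to a large part of $Y$, we get \ref{itm:a}; if $a$ has few neighbours in $Y$, delete them and rerun the move on $G[R,Y\setminus N(a)]$: a new dominator $b$ gives \ref{itm:b} (after shrinking $X'$ off $N(b)$, or relabelling to \ref{itm:a} if $b$ is also complete to much of $X'$), whereas if $Y$ is only matched there we descend, passing from $R$ to $S$ to $T$ by further moves and tracing a chain $Y'\leftrightarrow A\leftrightarrow A'\leftrightarrow A''$ through $R,S,T$; a final uniformisation then arranges the near-$K_4$ components and the adjacency of $b$ with $X'$, yielding \ref{itm:f}. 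If instead $X$ has a matcher $A\subseteq R$, uniformise $A$ against $Y$: if $A$ becomes complete to $Y'$ we get \ref{itm:d}; if it becomes anti-complete, then either $Y$ also has a matcher $B\subseteq R$ that (after uniformisation) is anti-complete to $X'$, giving \ref{itm:c}, or we descend once, pass to a matched set $A'\subseteq S$ covering all of $A$, uniformise $A'$ against $X'$ (forced to be anti-complete, else an earlier case applies) and against $Y'$, and obtain \ref{itm:e}.

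The main obstacle is keeping the bookkeeping consistent. Every outcome, especially \ref{itm:e} and \ref{itm:f}, needs several extracted sets to survive \emph{simultaneously} down to size $\ell'=\log\ell/10000$, yet a uniformisation step is the only one incurring a logarithmic loss (a matching extraction costs only a polynomial factor, a domination step essentially nothing). Hence the order of operations is forced: all matching extractions at the three levels must be performed first, keeping the relevant sets polynomially large and indexed by a common index set, and only afterwards may one perform a single bipartite-Ramsey uniformisation on that index set, which fixes all the matched vertices' behaviour towards $X'$ and $Y'$ and simultaneously brings the index set down to logarithmic size. Checking that the thresholds $\tau$ can be chosen compatibly, that the handful of vertices deleted at each step never consumes a large set, and that the six cases are genuinely exhaustive, is the technical core of the argument.
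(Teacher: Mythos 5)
Your plan is, in architecture, the paper's own proof: a domination-or-matching dichotomy at each of the interfaces $K$--$R$, $R$--$S$, $S$--$T$ (run separately for the $X$-side and the $Y$-side, with only polynomial losses, giving the analogues of outcomes (K1)--(K6) level by level), followed by a \emph{single} bipartite-Ramsey uniformisation that simultaneously fixes the type of every extracted parent towards both shrunken sides and incurs the one logarithmic loss, followed by a case analysis on which outcome occurred on each side. Your observation that the order of operations is forced by the loss rates is exactly the organising principle of the paper's two claims.

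There is one concrete point where the proposal as written would fail. You propose to extract the matchings by applying \Cref{obs:induced-matching} with $G_1=G_2=H$ at each interface. At the $R$--$S$ and $S$--$T$ levels this does not give the structure that \ref{itm:e} and \ref{itm:f} require: it prevents $s\in S'$ from being matched to two $r$'s, but it does not prevent $s$ from being adjacent to the $X$-endpoint $q(r')$ of a \emph{different} chain, nor does it control whether $s$ is adjacent to its own $q(r)$. Neither defect can be repaired afterwards: the final uniformisation only governs adjacencies between whole parent sets and $X'\cup Y'$ uniformly (it cannot enforce the ``perfect matching / induced component'' pattern, since Ramsey may hand you the fully-joined colour class), and making $S'$ independent via Ramsey is irrelevant to $S$--$X$ edges. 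The paper's fix is precisely the reason \Cref{obs:induced-matching} is stated with two graphs $G_1\subseteq G_2$: at the $R$--$S$ level one takes $G_1=G[R^{(1)},S^{(1)}]$ but lets $G_2$ declare $rs$ an edge whenever $s$ has a neighbour anywhere in the partial chain $\{r,q(r)\}$ (and similarly one level down), so that the extracted components are induced apart from the edges inside a single chain; those remaining within-chain edges ($qs$, $qt$) are then decided by a majority vote over components, which is what splits outcome (c) from the $K_3$-factor continuation and (c) from (d). You need to build this into your ``domination-or-matching'' move (with the dominator check taken over all of $S\cup T$ against the already-matched set, as the paper does, so that the degree bound needed for $G_2$ is available); with that correction the rest of your plan goes through.
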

    
    \begin{proof}
        We begin by proving the following claim.
        \begin{claim}
            Let $\ell_0 := \ell^{1/7} / 10$.
            There exist $R_1, S_1, T_1 \subseteq U$ of size $\ell_0$, each of which is contained in a different set amongst $R, S, T, Q_1 \subseteq X$ of size $\ell_0$, and a vertex $u_1 \in U$ such that one of the following holds:
            \begin{enumerate}[label = \rm(\alph*)]
                \item \label{itm:case1}
                     The vertex $u_1$ is complete to $Q_1$.
                \item \label{itm:case2}
                     The vertex $u_1$ is complete to $R_1$ and anti-complete to $Q_1$ and $G[Q_1, R_1]$ induces a perfect matching.
                \item \label{itm:case3}
                    Every component in $G[Q_1, R_1, S_1]$ is an induced path $qrs$ with $q, r, s$ in $Q_1, R_1, S_1$, respectively.
                \item \label{itm:case4}
                    Every component in $G[Q_1, R_1, S_1, T_1]$ consists of four vertices $q, r, s, t$ from $Q_1, R_1, S_1, T_1$, respectively, such that all pairs in $\{q, r, s, t\}$ are edges, except for possibly $rt$.
            \end{enumerate}
        \end{claim}
        
        \def \nq{q}
        \def \nr{r}
        \def \ns{s}
        \def \nt{t}
             
        \begin{proof}[Proof of the claim]
            We shall use $\ell = (10\ell_0)^7$  implicitly throughout the proof.
            
            If a vertex in $U$ has at least $\ell_0$ neighbours in $X$ then we may take $u_1$ to be this vertex and $Q_1$ to be a set of $\ell_0$ of its neighbours in $X$, which satisfies \ref{itm:case1}.
            In this case, the sets $R_1, S_1, T_1$ are chosen arbitrarily, as they play no roles in what follows. We may hence assume that every vertex in $U$ sends at most $\ell_0$ edges to $X$.
            
            By \Cref{obs:induced-matching}, there are subsets $Q^{(1)} \subseteq Q$ and $R^1 \subseteq R$ with $|Q^{(1)}| = |R^{(1)}| \ge \ell / 4\ell_0^2$ such that $G[Q^{(1)}, R^{(1)}]$ induces a perfect matching. 
            For $r \in R^{(1)}$, denote by $\nq(r)$ the unique neighbour of $r$ in $Q^{(1)}$; define $\nr(q)$ similarly for each $q \in Q^{(1)}$.
            
            Suppose that there is a vertex $u \in S \cup T$ with at least $2\ell_0$ neighbours in $R^{(1)}$. 
            As $u$ has at most~$\ell_0$ neighbours in $Q$, there are at least $\ell_0$ neighbours $r$ of $u$ such that $\nq(r)$ is not adjacent to $u$.
            Let $R_1 \subseteq R^{(1)}$ be a set of $\ell_0$ such vertices $r$.
            Then taking $Q_1 := \{\nq(r) : r \in R_1\}$ and $u_1 := u$ proves \ref{itm:case2}.
            Thus, we may assume that every vertex in $S \cup T$ has at most $2\ell_0$ neighbours in $R^{(1)}$.
            
            Let $S^{(1)} \subseteq S$ be a set of size $|R^{(1)}|$ such that each vertex in $R^{(1)}$ has a neighbour in $S^{(1)}$. 
            Let $G_1 = G[R^{(1)}, S^{(1)}]$ and let $G_2$ be the bipartite graph on $R^{(1)}\cup S^{(1)}$ where $rs$, $r \in R^{(1)}$ and $s \in S^{(1)}$, is an edge if $s$ has a neighbour in $\{r, \nq(r)\}$. By \Cref{obs:induced-matching}, 
            there exist subsets $R^{(2)} \subseteq R^{(1)}$ and $S^{(2)} \subseteq S^{(1)}$ with $|S^{(2)}| = |R^{(2)}| \ge |R^{(1)}| / 36\ell_0^2 \ge \ell / 144 \ell_0^4$, such that $G_i[R^{(2)}, S^{(2)}]$ induces a perfect matching, for $i=1,2$. 
            
            Let $Q^{(2)} := \{\nq(r) : r \in R^{(2)}\}$  and let $\nr(s), \ns(r), \nq(s), \ns(q)$ be defined similarly as above for $s \in S^{(2)}$, $ r \in R^{(2)}$, and $q \in Q^{(2)}$. That is, $\nr(s)$ is the unique neighbour of $s$ in $R^{(2)}$; $\nq(s)$ is the unique neighbour of $\nr(s)$ in $Q^{(2)}$; etc. 
            Then each component in $G[Q^{(2)}, R^{(2)}, S^{(2)}]$ consists of vertices $q, r, s$ with $q \in Q^{(2)}, r \in R^{(2)}, s \in S^{(2)}$ such that $qr$ and $rs$ are edges. 
            If $qs$ is a non-edge for at least half of the components, then \ref{itm:case3} holds. We may thus assume that $qs$ is an edge for at least half of the components, implying that there are subsets $Q^{(3)} \subseteq Q^{(2)}$, $R^{(3)} \subseteq R^{(2)}$ and $S^{(3)} \subseteq S^{(2)}$, with $|Q^{(3)}| = |R^{(3)}| = |S^{(3)}| \ge \ell / 288 \ell_0^4$, such that $G[Q^{(3)}, R^{(3)}, S^{(3)}]$ induces a $K_3$-factor.
            
            Suppose that $u \in T$ has at least $2\ell_0$ neighbours in $S^{(3)}$. 
            Let $R_1\subseteq S^{(3)}$ be a set of $\ell_0$ neighbours such that $\nq(s)$ is not a neighbour of $u$. Indeed, such a set exists because $u$ has at most $\ell_0$ neighbours in $Q^{(3)}$. Then taking $Q_1 := \{\nq(s) : s \in R_1\}$ and $u_1 := u$ makes \ref{itm:case2} hold.
            We thus assume that every vertex in $T$ has at most $2\ell_0$ neighbours in $S^{(3)}$.
            
            Now \Cref{obs:induced-matching} implies that there exist subsets $Q^{(4)}, R^{(4)}, S^{(4)}, T^{(4)}$ of $Q^{(3)}, R^{(3)}, S^{(3)}, T$, respectively, which are of the same size at least $|Q^{(3)}| / 100\ell_0^2 \ge \ell / 28800\ell_0^6$, such that each component in $G[Q^{(4)}, R^{(4)}, S^{(4)}, T^{(4)}]$ consists of vertices $q, r, s, t$ from $Q, R, S, T$, respectively, such that $qr, qs, rs, ts$ are edges. If $qt$ is a non-edge for at least half of the components, then \ref{itm:case3} holds by taking subsets of $Q^{(4)}, S^{(4)}, T^{(4)}$ that induces 2-edge paths. Otherwise, \ref{itm:case4} holds.
        \end{proof}
        
        An analogous claim that produces subsets $Q_2, R_2, S_2, T_2\subseteq Y$ and vertex $u_2$ also holds for $Y$. We may hence fix the vertex sets $Q_i, R_i, S_i, T_i$ and vertices $u_i$, $i=1,2$, that satisfy the claim above and the analogous statement for $Y$. We then proceed to further refine these subsets.
        
        \begin{claim}
            Let $\ell_1 = \log \ell_0 / 100$. There exist subset $Q_i', R_i', S_i', T_i'$ of $Q_i, R_i, S_i, T_i$, respectively, for $i =1,2$, which are of the same size $\ell_1$ and satisfy one of (the obvious analogues of) \ref{itm:case1} to \ref{itm:case4}, and, additionally, each of $u_i, R_i', S_i', T_i'$ is either complete or anti-complete to $Q_{3-i}$, for $i \in [2]$.
        \end{claim}
        
        \begin{proof}[Proof of the claim]
            For $q \in Q_1$, let $\nr(q)$ be the unique neighbour of $q$ in $R_1$, assuming one of \ref{itm:case2} to \ref{itm:case4} holds; otherwise, choose $\nr(q)$ arbitrarily (say, from $R_1$; it will play no roles). Similarly, let $\ns(q)$ be the unique neighbour of $\nr(q)$ in $S_1$, if one of \ref{itm:case3} and \ref{itm:case4} holds, and let $\nt(q)$ be the unique neighbour of $\ns(q)$ if \ref{itm:case4} holds.
            For the cases when $s(q)$ or $t(q)$ are not defined, we again choose them arbitrarily as they will play no roles.
            For $q \in Q_2$, we define $\nr(q), \ns(q), \nt(q)$ analogously with respect to $Q_2, R_2, S_2, T_2$.
            
            We then consider an auxiliary edge-coloured complete bipartite graph $H$ on $Q_1\cup Q_2$ as follows: for $q_1 \in Q_1$ and $q_2 \in Q_2$, colour $q_1 q_2$ by the $0$-$1$ vector of length eight that encodes which of the pairs in $\{q_{3-i}\} \times \{\nr(q_i), \ns(q_i), \nt(q_i), u_i\}$ are edges in $G$ or not, for $i=1,2$.
            
            By the classical K\"ovari--S\'os--Turan theorem, there exist subsets $Q_1' \subseteq Q_1$ and $Q_2' \subseteq Q_2$ of size $\ell_1$ each, such that $H[Q_1', Q_2']$ is monochromatic. Taking $R_i' := \{\nr(q) : q \in Q_i'\}$, $S_i'=\{\ns(q) : q \in Q_i'\}$, and $T_i'=\{\nt(q) : q \in Q_i'\}$ proves the claim.
        \end{proof}
        
        For brevity, let us rename $Q_i', R_i', S_i', T_i'$ to $Q_i, R_i, S_i, T_i$, respectively, for $i=1,2$.
        
        A simple (though somewhat tedious) case analysis now shows that one of \ref{itm:a} to \ref{itm:f} above holds by using $\ell_1 \ge \log \ell / 10000 = \ell'$. 
        \begin{itemize}
            \item
                {\bf Property \ref{itm:case1} holds for $i =1,2$.}
                
                We claim that one of \ref{itm:a} and \ref{itm:b} holds. Indeed, take $X' := Q_1$ and $Y' := Q_2$. If one of $u_1$ and $u_2$ is complete to both $Q_1$ and $Q_2$ for some $i \in [2]$, then \ref{itm:a} hold. Otherwise, taking $a := u_1$ and $b := u_2$ makes \ref{itm:b} hold.
            \item
                {\bf Property \ref{itm:case1} holds for neither $i =1$ nor $2$.}
                
                We claim that one of \ref{itm:c} and \ref{itm:d} holds. 
                Indeed, if $R_i$ is anti-complete to $Q_{3-i}$ for both $i=1$ and $2$, then \ref{itm:c} holds by taking $X', Y', A, A'$ to be $Q_1, Q_2, R_1, R_2$, respectively). Otherwise, without loss of generality $R_1$ is complete to $Q_2$, and \ref{itm:d} holds by taking $X', Y', A$ to be $Q_1, Q_2, R_1$.
        \end{itemize}
        Without loss of generality, it remains to consider the case where property \ref{itm:case1} holds for $i = 2$ but not for $i = 1$.
        At every step of the following case analysis, we iteratively assume that the previous cases do not hold.
        \begin{itemize}
            \item
                {\bf The vertex $u_2$ is complete to $Q_1$.} Then \ref{itm:a} holds.
            \item
                {\bf The set $R_1$ is complete to $Q_2$.} Then \ref{itm:d} holds.
            \item
                {\bf Property \ref{itm:case2} holds for $i = 1$.} 
                
                Then \ref{itm:e} holds: take $X', Y', A, A', b$ to be $Q_1, Q_2, R_1, \{u_1\}, u_2$.
            \item
                {\bf Property \ref{itm:case3} holds for $i = 1$.}
                
                Again \ref{itm:e} holds: take $X', Y', A, A', b$ to be $Q_1, Q_2, R_1, S_1, u_2$.
            \item
                {\bf Property \ref{itm:case4} holds for $i = 1$.}
                
                Here we may assume that $R_1, S_1, T_1$ are anti-complete to $Q_2$, as otherwise \ref{itm:d} holds.
                It follows that \ref{itm:f} holds: take $X', Y', A, A', A'', b$ to be $Q_1, Q_2, R_1, S_1, T_1, u_2$, respectively. \qedhere
        \end{itemize}
    \end{proof}

    \subsection{Finding a cycle with exactly $k$ chords}
    
    Finally, we prove the following lemma, which finds a cycle with exactly $k$ chords given a large collection of pairwise vertex-disjoint $K_{\ell,\ell}$'s with no edges between them.
    As mentioned previously, we will use a similar approach to that used in the previous section and earlier in this section, connecting the $K_{\ell,\ell}$'s by unimodal paths and closing a cycle by choosing a path of the right length from each $K_{\ell,\ell}$. The difference here is the much more careful analysis of the interaction between a single $K_{\ell,\ell}$, and a collection of parents, `grandparents' and `great-grandparents' of its vertices. This analysis will give rise to three cases (the first corresponding to the first four cases in \Cref{lem:cases} and the last two each corresponding to one of the last two cases in \Cref{lem:cases}). In the last two cases we will sometimes need to adjust the cycle lengths slightly. For that we build our cycle so as to contain certain small `gadgets' that will allows us to do so.  
    
    \begin{lem} \label{lem:find-cycle-from-Kll-exact}
        Let $p, \ell \gg k \gg 1$. Suppose that, in a $p$-extraction of $G$, there are $\ell$ pairwise vertex-disjoint induced copies of $K_{\ell, \ell}$ with no edges between them. Then there is a cycle in $G$ with exactly~$k$ chords.
    \end{lem}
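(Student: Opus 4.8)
The plan is to follow the strategy of \Cref{lem:finding-cycle-from-Kll} --- connect a bounded number of the given copies of $K_{\ell,\ell}$ by unimodal paths and thread a path of carefully chosen length through each --- while using \Cref{lem:cleaning-step-one} and \Cref{lem:cases} to control the interaction between each copy and the ``fathers'' of its vertices that appear on the cycle, and while using the father-chains provided by \Cref{lem:cleaning-step-one} to build small gadgets that repair the parity obstruction met in \Cref{sec:approximate}.

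First I would apply \Cref{lem:cleaning-step-one} with $p, \ell \gg m \gg k$: either there is already a cycle with exactly $k$ chords and we are done, or we obtain pairwise vertex-disjoint induced copies $K_1', \ldots, K_m'$ of $K_{m,m}$ and a set $J \subseteq [p]$ of $m$ layers such that for every triple $T = \{j_1 > j_2 > j_3\} \subseteq J$, every $i \in [m]$ and every $u \in V(K_i')$ there is a father-chain $f_1(u;T), f_2(u;T), f_3(u;T)$ sending no edges to $\bigcup_{s \ne i} V(K_s')$. Fixing such chains, for each $i$ and each $T$ I would apply \Cref{lem:cases} to $K = K_i'$ with $R, S, T$ the sets of first, second, and third fathers $f_1(u;T), f_2(u;T), f_3(u;T)$ over $u \in V(K_i')$ (the three sets lie in the three distinct layers of $T$, so the hypotheses hold), obtaining one of the outcomes \ref{itm:a}--\ref{itm:f}. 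A standard iterated pigeonhole --- essentially a bipartite Ramsey statement for $6$ colours, valid because $m$ is large and $J$ contains $\Omega(m)$ triples with pairwise disjoint layer-sets --- then fixes one outcome, a set $I$ of at least $100$ indices, and a set $\mathcal{T}$ of at least $100$ triples with pairwise disjoint layer-sets avoiding a pool of layers reserved for the connecting unimodal paths, such that the fixed outcome holds for $(K_i', T)$ for all $i \in I$ and $T \in \mathcal{T}$. Finally I would run the usual clean-up from the start of the proof of \Cref{lem:finding-cycle-from-Kll} (shrinking each $K_i'$) so that the first internal vertices of the connecting unimodal paths are complete or anti-complete to each side of each $K_i'$, and invoke \Cref{lem:sqrtk-same-Kll} and \Cref{lem:sqrtk-different-Kll} to ensure that the interiors of those unimodal paths contribute only $O(\sqrt k)$ chords, since otherwise a cycle with exactly $k$ chords already exists.

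The argument then splits on the fixed outcome. In the four outcomes \ref{itm:a}--\ref{itm:d}, the structural information from \Cref{lem:cases} lets us build the cycle in the style of \Cref{lem:finding-cycle-from-Kll} --- connecting about $40$ of the copies $K_i'$, $i \in I$, by pairs of unimodal paths and threading through $K_s'$ a path of length $2a_s+1$ between the two sides --- in such a way that the number of chords equals $f_G(k) + \sum_s (a_s^2 + t_s a_s)$, where $f_G(k) = O(\sqrt k)$ is fixed and \emph{every coefficient $t_s$ is even} (each relevant father meets the copy either in a perfect matching, contributing boundedly many chords independent of $a_s$, or completely and in a balanced fashion on one or both sides); completing the square and applying \Cref{lem:twenty-squares} then produces integers $a_s$ giving exactly $k$ chords. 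In outcomes \ref{itm:e}--\ref{itm:f} the $t_s$ may be odd and, exactly as in \Cref{thm:main-almost}, the base construction only reaches $k + j$ for some $j \in \{0, 1, 2, 3\}$ once we choose, in \Cref{lem:sum-skewed-squares}, the quantity $r$ to be a multiple of $4$ lying just below the target. To delete the error we also route the cycle, at one chosen copy $K_i'$ with $i \in I$, through its father-chain $f_1(u; T), f_2(u; T), f_3(u; T)$ for a reserved triple $T \in \mathcal{T}$; an analysis in the spirit of \Cref{claim:paths-one-fathers} of the structure in \ref{itm:e} or \ref{itm:f} shows that this gadget has four internally different traversals contributing $0$, $1$, $2$, or $3$ extra chords and no others --- using that the father-chain is edge-disjoint from the other copies by \Cref{lem:cleaning-step-one} and lies in a reserved layer, so \Cref{obs:unimodal} leaves the rest of the cycle unaffected. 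Choosing the traversal that adds exactly $j$ chords completes the proof.

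I expect the main obstacle to be the structural work behind the last paragraph: verifying that in outcomes \ref{itm:e} and \ref{itm:f} a gadget with precisely the required flexibility exists, and that neither it nor any of the many unimodal connections creates further, uncontrolled chords. This calls for a careful and rather long case analysis of how each relevant father vertex sees the path chosen inside each $K_i'$, plus the bookkeeping needed to keep the copies, the connecting paths, and the gadget in mutually non-interacting layers; by comparison, outcomes \ref{itm:a}--\ref{itm:d} and the pigeonhole reduction are routine.
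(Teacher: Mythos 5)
Your overall architecture coincides with the paper's: apply \Cref{lem:cleaning-step-one}, feed the father-chains into \Cref{lem:cases}, pigeonhole to fix one outcome on many copies, thread paths of chosen lengths through the copies joined by unimodal paths, control cross-chords via \Cref{lem:sqrtk-same-Kll,lem:sqrtk-different-Kll}, and close with \Cref{lem:twenty-squares} in the even cases and \Cref{lem:sum-skewed-squares} otherwise. The treatment of outcomes \ref{itm:a}--\ref{itm:d} (even coefficients, complete the square, twenty squares) is exactly what the paper does.

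The genuine gap is the parity-repair step for outcomes \ref{itm:e} and \ref{itm:f}, which you leave as an unverified claim: that a single gadget, obtained by routing the cycle through one father-chain $f_1(u;T), f_2(u;T), f_3(u;T)$, admits four traversals contributing exactly $0$, $1$, $2$, $3$ extra chords ``and no others.'' Nothing in the structure guaranteed by \Cref{lem:cleaning-step-one} or \Cref{lem:cases} supports this: the cleaning lemma controls edges from the chain to the \emph{other} copies $K_s'$, not the adjacencies within the chain or back into $K_i'$, and the analogous \Cref{claim:paths-one-fathers} shows that a single such configuration typically offers only a \emph{subset} of $\{0,1,2,3\}$ (which value is available depends on the parameter $\sigma$ there); the paper needs two configurations and a unimodal connection between them to realise all the needed values. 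You also do not explain how your gadget attaches to the cycle without creating further uncontrolled chords. The paper instead repairs parity using structure already inside outcomes \ref{itm:e} and \ref{itm:f}: the matched vertex $u_{i,3}'$ together with the auxiliary vertices $u_{i,4}$ (and $u_{i,5}$) is excluded from the threaded path $Q_i^*$, and swapping one interior $X_i$-vertex of $Q_i^*$ for $u_{i,3}'$ adds \emph{exactly} one chord (outcome \ref{itm:e}) or exactly three chords (outcome \ref{itm:f}); performing this swap at $\tau \in \{0,1,2,3\}$ of the copies yields any residue modulo $4$, which is what \Cref{lem:sum-skewed-squares} requires. You also miss that in outcome \ref{itm:e} there is a sub-case (when $u_{i,4}$ is complete to $Y_i$) where the linear coefficient is even for many copies and no parity repair is needed at all --- the paper splits on this before invoking the gadget. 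To complete your proof you would need to either establish your four-traversal gadget by a case analysis you have not carried out, or replace it with the per-copy swap mechanism above.
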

    
    \begin{proof}
        Suppose that there is no cycle with exactly $k$ chords. 
        Let $m$ satisfy $p, \ell \gg m \gg k$. 
        Then by \Cref{lem:cleaning-step-one}, there exist a collection $K_1, \ldots, K_{200}$ of pairwise vertex-disjoint copies of induced $K_{m,m}$'s in the $p$-extraction of $G$, a subset $J \subseteq [p]$ of size $200 \cdot 75$ that satisfy:
        for every triple $T=\{j_1,j_2,j_3\}$ in $J$ with $j_1>j_2>j_3$, every $i\in[200]$ and every $u\in K_i$,
        there exist vertices $f_{1}(u;T)$, $f_{2}(u;T)$,  and $f_{3}(u;T)$
        such that $f_t(u;T)$ is $j_t$-father of $f_{t-1}(u;T)$, $t=1,2,3$, where $u=f_0(u;T)$. Moreover, there are no edges between $\{f_{1}(u;T),f_{2}(u;T),f_{3}(u;T)\}$ and $\cup_{s\neq i}V(K_s)$.
        Let $S_1, \ldots, S_{200}\subseteq J$ be pairwise disjoint subsets of size $75$.

        Fix $i \in [200]$ and let $T_{1}, \ldots, T_{25}$ be disjoint triples that partition $S_i$. Now repeatedly apply \Cref{lem:cases} twenty five times to each $K_i$: at the beginning, let $K=K_i$ with the bipartition $X_i^{(0)}\cup Y_i^{(0)}$. At the $j$-th iteration, we apply \Cref{lem:cases} with $X=X_i^{(j-1)}$, $Y=Y_i^{(j-1)}$, $R=\{f_{1}(u;T_j) : u \in X_i^{(j-1)} \cup Y_i^{(j-1)}\}$, $S=\{f_{2}(u;T_j) : u \in X_i^{(j-1)} \cup Y_i^{(j-1)}\}$ and $T=\{f_{3}(u;T_j) : u \in X_i^{(j-1)} \cup Y_i^{(j-1)}\}$.
        Denote the output of the lemma by $X'_{j}, Y'_{j}, A_{j}, A'_{j}, A''_{j}$ and vertices $a_{j},b_{j}$. Recall that either $X'_{j} \subseteq X$ and $Y'_{j} \subseteq Y$, or $X'_{j} \subseteq Y$ and $Y'_{j} \subseteq X$.
        In the former case, take $X_i^{(j)} = X_{j}'$ and $Y_i^{(j)} = Y_{j}'$ and otherwise take $X_i^{(j)} = Y_{j}'$ and $Y_i^{(j)} = X_{j}'$; so that $X_i^{(j)} \subseteq X_{i-1}^{(j)}$ and $Y_i^{(j)} \subseteq Y_{i-1}^{(j)}$. 

        Notice that there exists $L \subseteq [25]$ of size $3$ such that the same case among \ref{itm:a} to \ref{itm:f} occurred in all iterations indexed by $L$, and moreover either for all $\ell \in L$ we have $X_i^{(\ell)} = X_{\ell}'$ or we always have $X_i^{(\ell)} = Y_{\ell}'$.
		We assume that $X_i^{(\ell)} = X_\ell'$ for all $\ell \in L$; the other case can be handled analogously.
		We consider three cases, as follows.
        For brevity, write $X_i = X_i^{(25)}$ and $Y_i = Y_i^{(25)}$; so $X_i$ and $Y_i$ are the shrunken sets at the end of the 25 iterations, each of which has size at least $(\log ^{(50)} m) / 10^6$.\footnote{Here 
        $\log ^{(\ell)} x$ denotes $\ell$ compositions of logarithm, e.g., $\log^{(2)}x = \log\log x$.}
        \begin{enumerate} [label = \rm(\arabic*)]
            \item \label{itm:easy-case}
				One of \ref{itm:a} to \ref{itm:d} occurred in all iterations indexed by $L$.

                Let $\ell_1, \ell_2 \in L$ be distinct.
                We choose vertices $u_{i,1}, u_{i,2}, u_{i,1}', u_{i,2}'$ as follows.
                \begin{itemize}
                    \item \ref{itm:a}:
                        Take $u_{i, 1} = a_{\ell_1}$ and $u_{i, 2} = a_{\ell_2}$, so that $u_{i,1}$ and $u_{i,2}$ are complete to $X_i \cup Y_i$; choose $u_{i, 1}' \in X_i$ and $u_{i, 2}' \in Y_i$. 
                    \item \ref{itm:b}:
						Take $u_{i,1} = a_{\ell_1}$ and $u_{i,2} = b_{\ell_2}$, so that $u_{i,1}$ is complete to $X_i$ and anti-complete to $Y_i$ and $u_{i, 2}$ is complete to $Y_i$ and anti-complete to $X_i$; choose $u_{i, 1}' \in X_i$ and $u_{i, 2}' \in Y_i$.
                    \item \ref{itm:c}:
						Pick $u_{i,1}' \in X_i$ and $u_{i,2}' \in Y_i$ arbitrarily, and let $u_{i,1}$ be the unique neighbour of $u_{i,1}'$ in $A_{\ell_1}$ and $u_{i,2}$ to be the unique neighbour of $u_{i,2}'$ in $A'_{\ell_2}$.
						Notice that $u_{i,s}'$ is the unique neighbour of $u_{i,s}$ in $X_i \cup Y_i$, for $s \in [2]$.
                    \item \ref{itm:d}:
						Let $u_{i,1}', u_{i,2}' \in X_i$ be distinct, and let $u_{i,s}$ be unique neighbour of $u_{i,s}'$ in $A_{\ell_s}$, for $s \in [2]$.
						Then $u_{i,s}$ is complete to $Y_i$ and $u_{i,s}'$ is its only neighbour in $X_i$, for $s \in [2]$.
                \end{itemize}
				Notice that $u_{i,1},u_{i,2}$ belong to different layers with indices in $S_i$ and are anti-complete to $X_j$ for $j \in [200] \setminus \{i\}$.
            \item \label{itm:annoying-case-a}
				Case \ref{itm:e} occurred in all cases indexed by $L$.

				Write $L = \{\ell_1, \ell_2, \ell_3\}$.
				Let $u_{i,1}', u_{i,3}' \in X_i$ be distinct and let $u_{i,s}$ be the unique neighbour of $u_{i,s}'$ in $A_{\ell_s}$, for $s \in \{1,3\}$. Take $u_{i,4}$ to be a neighbour of $u_{i,3}$ in $A'_{\ell_3}$, take $u_{i,2} = b_{\ell_2}$ and choose $u_{i,2}' \in Y_i$.
				Then
                \begin{itemize}
                    \item
						$u_{i, 1}, \ldots, u_{i, 4}$ belong to four distinct layers with indices in $S_i$ and are anti-complete to $X_j \cup Y_j$, for $j \in [200] \setminus \{i\}$.
                    \item 
						$u_{i, 1}$ and $u_{i, 3}$ are anti-complete to $Y_i$, and $u_{i,s}'$ is the unique neighbour of $u_{i,s}$ in $X_i$, for $s \in \{1,3\}$.
                    \item
                        $u_{i, 2}$ is complete to $Y_i$ and anti-complete to $X_i$. 
                    \item
                        $u_{i, 4}$ is anti-complete to $X_i$ and either complete or anti-complete to $Y_i$.
                \end{itemize}
            \item \label{itm:annoying-case-b}
				Case \ref{itm:f} occurred in all cases indexed by $L$.

				Write $L = \{\ell_1, \ell_2, \ell_3\}$.
				Let $u_{i,1}', u_{i,3}' \in X_i$ be distinct and let $u_{i,s}$ be the unique neighbour of $u_{i,s}'$ in $A_{\ell_s}$, for $s \in \{1,3\}$. Take $u_{i,4}$ to be the unique neighbour of $u_{i,3}$ in $A'_{\ell_3}$ and $u_{i,5}$ the unique neighbour of $u_{i,4}$ in $A''_{\ell_3}$. Finally, let $u_{i,2} = b_{\ell_2}$ and pick $u_{i,2}' \in Y_i$.
				Then
                \begin{itemize}
                    \item
						$u_{i, 1}, \ldots, u_{i, 5}$ are in distinct layers with indices in $S_i$ and are anti-complete to $X_j \cup Y_j$ for $j \in [200] \setminus \{i\}$.
                    \item 
						$u_{i,s}'$ is the unique neighbour of $u_{i,s}$ in $X_i$ and is anti-complete to $Y_i$, for $s \in \{1,3\}$.
                    \item
                        $u_{i, 2}$ is complete to $Y_i$ and anti-complete to $X_i$. 
                    \item
                        $u_{i, 4}$ is a neighbour of $u_{i, 3}$ and $u_{i, 3}'$ and is anti-complete to $(X_i \cup Y_i) \setminus \{u_{i, 3}'\}$.
                    \item
                        $u_{i, 5}$ is a neighbour of $u_{i, 3}'$ and $u_{i, 4}$ and is anti-complete to $(X_i \cup Y_i) \setminus \{u_{i, 3}'\}$.
                \end{itemize}
        \end{enumerate}
        Notice that one of the following holds: \ref{itm:easy-case} holds for $20$ values of $i \in [200]$; \ref{itm:annoying-case-a} holds for $100$ values of $i \in [200]$; or \ref{itm:annoying-case-b} holds for $80$ values of $i \in [200]$.
		We show how to find the desired cycle with $k$ chords in each of these cases separately.
        
        \medskip
        
        \textbf{Case \ref{itm:easy-case} holds for at least $20$ values of $i \in [200]$.}
        
        By relabelling the indices of the $200$ copies of $K_{m,m}$ if necessary, we may assume that \ref{itm:easy-case} holds for $i \in [20]$. To construct a cycle with $k$ chords, we need one more copy of $K_{m,m}$, so we shall use $K_{21}$ too. For simplicity, let $K_0=K_{21}$ be the copy of $K_{m,m}$ on $X_0\cup Y_0$, where $X_0=X_{21}$ and $Y_0=Y_{21}$.
        
		We want to take a set of distinct vertices $w_{i,j}$, $i\in[20]$ and $j \in [2]$, where each $w_{i,j}$ is at the same layer as $u_{i,j}$ and has a neighbour $w_{i,j}'$ in $K_0$ which are all distinct too, and there are no edges between $\bigcup_{i \in [20]} (X_i \cup Y_i)$ and $\{w_{i,j}: i \in [20], j \in [2]\}$. Here the index $j=1,2$ indicates $w_{i,1}'\in X_0$ and $w_{i,2}'\in Y_0$, as was done for $u_{i,j}'$'s.
        Indeed, this is possible since one can greedily take $w_{i,1}'$ and $w_{i,2}'$ disjoint from the previous choices in $X_0$ and $Y_0$, respectively, and let $w_{i,j}=f_1(w'_{i,j};T)$ where $T$ is any triple whose smallest index is the index of the layer containing $u_{i,j}$.
        Let $P_{i, j}$ be a unimodal path between $u_{i, j}$ and $w_{i, j}$, taken in the same layers as the two end vertices so that they are in different layers from the $K_i$'s. 

		Given integers $\alpha_1, \ldots, \alpha_{20}$, we define a cycle $\C(\alpha_1, \ldots, \alpha_{20})$ as follows. Let $Q_i$ be the path from $w_{i, 1}'$ to $w_{i+1, 1}$ for $i \in [20]$ (addition of indices taken modulo $20$), obtained by concatenating the paths $w_{i, 1}' w_{i, 1}$, $P_{i,1}$, $u_{i, 1} u_{i, 1}'$, $Q_i^*$, $u_{i, 2}' u_{i, 2}$, $P_{i,2}$, $w_{i, 2} w_{i, 2}' w_{i+1, 1}'$, where $Q_i^*$ is a path in $G[X_i, Y_i]$ with ends $u_{i, 1}'$ and $u_{i, 2}'$ of length $2\alpha_i + 1$. 
        Now let $\C(\alpha_1, \ldots, \alpha_{20})$ be the cycle obtained by concatenating $Q_1, \ldots, Q_{20}$.
        
        \begin{figure}[h]
            \centering
            \includegraphics{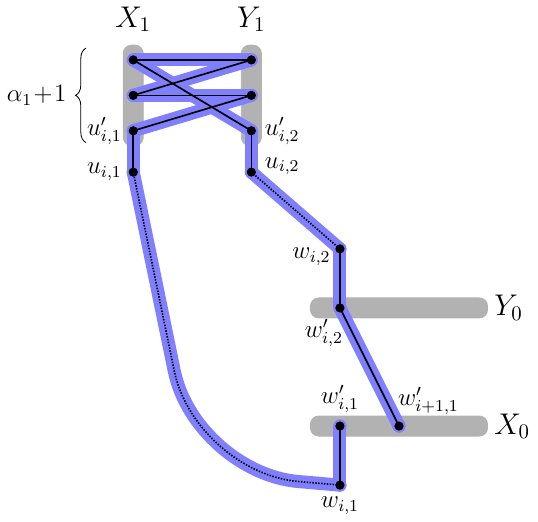}
            \caption{The path $Q_i$ in Case \ref{itm:easy-case}}
            \label{fig:easy-case}
        \end{figure}
        
        While the cycle $\C(\alpha_1, \ldots, \alpha_{20})$ is not uniquely defined, its number of chords depends only on $\alpha_1, \ldots, \alpha_{20}$. 
        To evaluate the number of chords in this cycle, $Q_i^*$ contributes $(\alpha_i + 1)^2 - (2\alpha_i + 1) = \alpha_i^2$ chords; the number of chords with one end in $\{u_{i, 1}, u_{i, 2}\}$ and one end in $X_i \cup Y_i$ is one of the four values $2(2\alpha_i + 1)$, $2\alpha_i$, $0$, and $2(\alpha_i + 1)$, depending on the four cases described in~\ref{itm:easy-case}; there are at most $120^2$ chords with both ends in $\bigcup_{i \in [20], j \in [2]} \{w_{i, j}, w_{i, j}', u_{i, j}\}$; and we may assume that the number of chords with ends in $\bigcup_{i \in [20], j \in [2]} V(P_{i,j})$ is $O(\sqrt{k})$ by \Cref{lem:sqrtk-different-Kll,lem:sqrtk-same-Kll}. Note that there are no chords between distinct sets $X_i \cup Y_i$, between a set $X_i \cup Y_i$ and $\{u_{j,1}, u_{j,2}, w_{j, 1}, w_{j, 2}\}$ for distinct $i, j \in [20]$, or between $\{u_{i,1}, u_{i, 2}\}$ and $\{w_{j, 1}, w_{j,1}', w_{j,2}, w_{j,2}'\}$.  In total, the number of chords is 
        \begin{equation*}
            f_G(k) + \sum_{i \in [20]} (\alpha_i^2 + 2\sigma_i \alpha_i) 
            = f_G(k) + \sum_{i \in [20]} \left((\alpha_i + \sigma_i)^2 - \sigma_i^2\right) 
            = h_G(k) + \sum_{i \in [20]} (\alpha_i + \sigma_i)^2.
        \end{equation*}
        where $\sigma_i \in \{0, 1, 2\}$ for $i \in [20]$, and $f_G(k)$ and $h_G(k)$ are functions that do not depend on $\alpha_1, \ldots, \alpha_{20}$ and satisfy $f_G(k), h_G(k) = O(\sqrt{k})$. By \Cref{lem:twenty-squares}, there is a choice of integers $\beta_1 \ldots, \beta_{20} \ge 2$ such that $\sum_{i \in [20]} \beta_i^2 = k - h_G(k)$. A cycle $\C(\alpha_1, \ldots, \alpha_{20})$ with $\alpha_i = \beta_i - \sigma_i$ has $k$ chords, as desired.
        
        \medskip
        
        \textbf{Case \ref{itm:annoying-case-a} holds for at least $100$ values of $i \in [200]$.}
        
        Again by possible relabelling, we may assume that \ref{itm:annoying-case-a} holds for $i \in [100]$.
        We also need an extra copy, denoted by $K_0$, taken from $K_i$, $i>100$, on the bipartition $X_0\cup Y_0$.
        Let $w_{i, j}$, with $i \in [100]$ and $j \in [4]$, satisfy the following: $w_{i, j}$ is at the same layer as $u_{i, j}$; it is anti-complete to $X_i \cup Y_i$ for $i \in [100]$; $w_{i, j}$ has a neighbours $w_{i, j}'$ such that $w_{i, j}' \in X_0$ if $j \in \{1, 3\}$ and otherwise $w_{i, j}' \in Y_0$; and the vertices $w_{i, j}, w_{i, j}'$, with $j \in [4]$ and $i \in [100]$, are all distinct. 
        Indeed, such choices of $w_{i,j}$ and $w_{i,j}'$ are possible since one can again take $w_{i,j}=f_1(w_{i,j}';T)$ with any triple $T$ whose smallest index is the index of the layer containing $u_{i,j}$, while maintaining all $w_{i,j}'$ being distinct.
        
        Let $P_{i, j}$ be a unimodal path with ends $u_{i, j}$ and $w_{i, j}$ in the same layer as the end vertices,
        for each $i \in [100]$ and $j \in [4]$. 
        Given positive integers $\alpha_1, \ldots, \alpha_{100}$, we define a cycle $\C(\alpha_1, \ldots, \alpha_{100})$ as follows. Let $Q_i$ be the path from $w_{i, 1}'$ to $w_{i+1, 1}'$ (index addition taken modulo $100$), obtained by concatenating the paths $w_{i, 1}' w_{i, 1}$, $P_{i, 1}$, $u_{i, 1} u_{i, 1}'$, $Q_i^*$, $u_{i, 2}'u_{i, 2}$, $P_{i, 2}$, $w_{i, 2} w_{i, 2}' w_{i, 3}' w_{i, 3}$, $P_{i,3}$, $u_{i, 3} u_{i, 4}$, $P_{i, 4}$, $w_{i, 4} w_{i, 4}' w_{i+1, 1}'$, where $Q_{i}^*$ is a path in $G[X_i \setminus \{u_{i, 3}'\}, Y_i]$ with ends $u_{i, 1}'$ and $u_{i, 2}'$, and of length $2\alpha_i + 1$. Let $\C(\alpha_1, \ldots, \alpha_{100})$ be the cycle obtained by concatenating $Q_1, \ldots, Q_{100}$.
        
        \begin{figure}
            \centering
            \includegraphics[scale = .8]{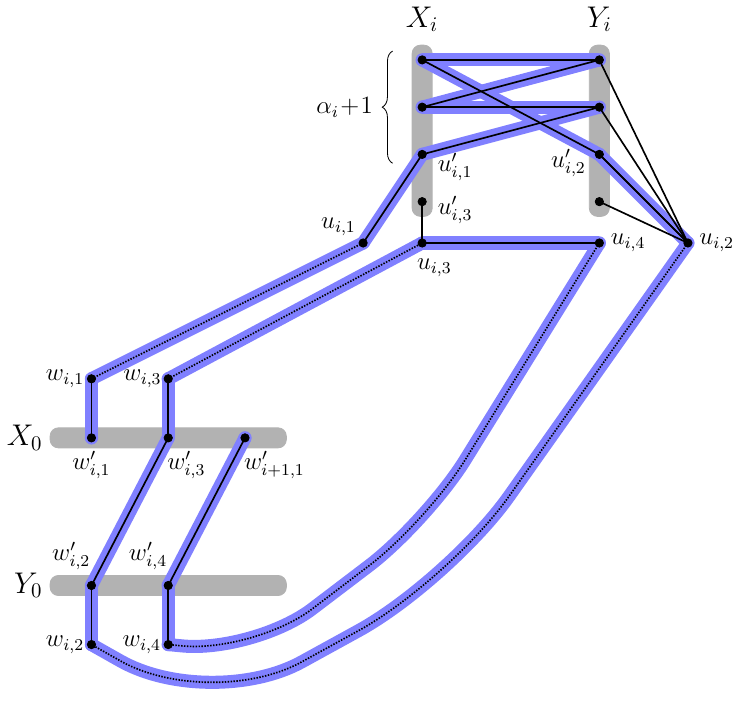}
            \caption{The path $Q_i$ in Case \ref{itm:annoying-case-a}}
            \label{fig:annoying-case-a}
        \end{figure}
        
        As above, the cycle $\C(\alpha_1, \ldots, \alpha_{100})$ is not uniquely defined, but the number of its chords depends only on $\alpha_1, \ldots, \alpha_{100}$. 
        Let us evaluate the number of chords precisely as follows: 
        $Q_{i}^*$ contributes $\alpha_i^2$ chords; 
        the number of chords between $\{u_{i, 1}, \ldots, u_{i, 4}\}$ and $X_i \cup Y_i$ is either $\alpha_i$ or $2\alpha_i+1$ depending on how $u_{i,4}$ connects to $Y_i$ (either complete or anti-complete); there are at most $1200^2$ chords with ends in $\bigcup_{i \in [100], j \in [4]}\{w_{i, j}, w_{i, j}', u_{i, j}\}$; and we may assume that the number of chords in $\bigcup_{i \in [100], j \in [4]} V(P_{i, j})$ is $O(\sqrt{k})$ by \Cref{lem:sqrtk-different-Kll,lem:sqrtk-same-Kll}. In total, the number of chords in the cycle is
        \begin{align*}
            f_G(k) + \sum_{i \in [100]}(\alpha_i^2 + \sigma_i \alpha_i),
        \end{align*}
        where $\sigma_i \in \{1, 2\}$ and $f_G(k) = O(\sqrt{k})$. Here $\sigma_i$'s and $f_G(k)$ do not depend on $\alpha_i$'s.
        
        Suppose that $\sigma_i = 2$ for at least $20$ values of $i \in [100]$; let $I$ be a set of $20$ such values of $i$. Set $\alpha_i = 1$ for $i \in [100] \setminus I$. Each $\alpha_i$, $i \in I$, is not yet determined. Then the number of chords in a cycle $\C(\alpha_1, \ldots, \alpha_{100})$ reduces to 
        \begin{equation*}
            h_G(k) + \sum_{i \in I} (\alpha_i + 1)^2,
        \end{equation*}
        for some $h_G(k) = O(\sqrt{k})$ that does not depend on $\alpha_i$'s with $i \in I$. By \Cref{lem:twenty-squares}, there exist integers $\beta_i$, for $i \in I$, such that $\beta_i \ge 2$ and $\sum_{i \in I} \beta_i^2 = k - h_G(k)$. Set $\alpha_i = \beta_i - 1$. The cycle $\C(\alpha_1, \ldots, \alpha_{100})$ then has exactly $k$ chords.
        
        It remains to consider the case where $\sigma_i = 1$ for at least $80$ values of $i \in [100]$. Let $I$ be a set of $80$ values of $i$ such that $\sigma_i=1$. Set $\alpha_i = 1$ for $i \in [100] \setminus I$. Then the number of chords in the cycle $\C(\alpha_1, \ldots, \alpha_{100})$ is 
        \begin{equation*} 
             h_G(k) + \sum_{i \in I} (\alpha_i^2 + \alpha_i),
        \end{equation*}
        where $h_G(k) = O(\sqrt{k})$ is independent of the $\alpha_i$'s. Let $\tau \in \{0, 1, 2, 3\}$ be the remainder of $k - h_G(k)$ modulo $4$. Let $\C_{\tau}(\alpha_1, \ldots, \alpha_{100})$ be a cycle obtained by replacing an inner vertex of $V(Q_i^*) \cap X_i$ by $u_{i, 3}'$, for $\tau$ values of $i$. One can readily check that the number of chords in $\C_{\tau}(\alpha_1, \ldots, \alpha_{100})$ is $h_G(k) + \tau + \sum_{i \in I} (\alpha_i^2 + \alpha_i)$m as we gain the additional chord $u_{i, 3} u_{i, 3}'$ for $\tau$ values of $i$. Now, since $k - h_G(k) - \tau$ is divisible by $4$, \Cref{lem:sum-skewed-squares} implies that there exist integers $\alpha_i$, $i \in I$, such that $\alpha_i \ge 1$ and $\sum_{i \in I} (\alpha_i^2 + \alpha_i) = k - h_G(k) - \tau$. In particular, there is a cycle with exactly $k$ chords.
        
        \medskip
        
        \textbf{Case \ref{itm:annoying-case-b} holds for at least $80$ values of $i \in [200]$}
        
        Without loss of generality, \ref{itm:annoying-case-b} holds for $i \in [80]$ and let $K_0=K_{81}$ be another copy of $K_{m,m}$ on $X_0\cup Y_0$. Analogously to the previous cases, choose $w_{i, j}$, $i \in [80]$ and $j \in [5]$, that satisfy the following conditions: $w_{i, j}$ is in the same layer as $u_{i, j}$; it is anti-complete to $X_i \cup Y_i$ for $i \in [80]$; $w_{i, j}$ has a neighbour $w_{i, j}'$ such that $w_{i, j}' \in X_0$ for $j \in \{1, 3\}$ and $w_{i, j}' \in Y_0$ for $j \in \{2, 4, 5\}$; and the vertices $w_{i, j}, w_{i,j}'$, with $i \in [80]$ and $j \in [5]$, are all distinct. 
        As before, such choices for $w_{i,j}$ and $w_{i,j}'$ exist.
        
        One can find a path $P_{i, j}$ between $u_{i, j}$ and $w_{i, j}$ in the same layer as the two ends, for each $i \in [80]$ and $j \in [5]$. 
        In fact, we do not make use of $w_{i, 4}$, $w_{i, 4}'$ and $P_{i, 4}$, but we took them to preserve the correspondence between indices.
        
        Given positive integers $\alpha_1, \ldots, \alpha_{80}$, we choose a cycle $\C^0(\alpha_1, \ldots, \alpha_{80})$ as follows. Let $Q_i$ be the path from $w_{i, 1}'$ to $w_{i+1, 1}'$ (addition modulo $80$), obtained by concatenating the paths $w_{i,1}' w_{i,1}$, $P_{i, 1}$, $u_{i, 1} u_{i, 1}'$, $Q_{i}^*$, $u_{i, 2}' u_{i, 2}$, $P_{i, 2}$, $w_{i, 2} w_{i, 2}' w_{i, 3}' w_{i, 3}$, $P_{i, 3}$, $u_{i, 3} u_{i, 4} u_{i, 5}$, $P_{i, 5}$, $w_{i, 5} w_{i, 5}' w_{i+1, 1}'$; where $Q_i^*$ is a path in $G[X_i \setminus \{u_{i, 3'}\}, Y_i]$ with ends $u_{i,1}'$ and $u_{i, 2}'$ and of length $2\alpha_i + 1$. Let $\C^0(\alpha_1, \ldots, \alpha_{80})$ be the cycle obtained by concatenating $Q_1, \ldots, Q_{80}$. For $\tau =1,2,3$, let $\C^{\tau}(\alpha_1, \ldots, \alpha_{80})$ be the cycle obtained from $\C^0(\alpha_1, \ldots, \alpha_{80})$ by replacing one of the internal vertices of $Q_{i}^*$ in $X_i$ by $u_{i, 3}'$, for $i \in [\tau]$. 
        \begin{figure}
            \centering
            \includegraphics[scale = .8]{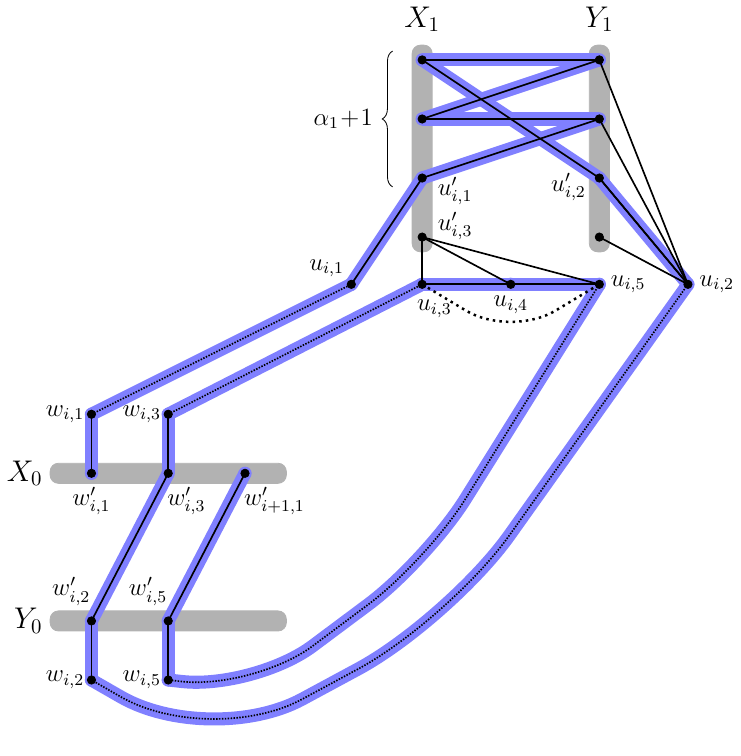}
            \caption{The path $Q_i$ in Case \ref{itm:annoying-case-b}}
            \label{fig:annoying-case-b}
        \end{figure}
        
        The cycle $\C^\sigma(\alpha_1, \ldots, \alpha_{80})$ for each $\sigma=0,1,2,3$ is not necessarily unique, but all the choices have the same number of chords. 
        To verify, let us evaluate the number of chords in $\C^0(\alpha_1, \ldots, \alpha_{80})$ first, as follows: $Q_i^*$ contributes $\alpha_i^2$ chords; there are exactly $\alpha_i$ chords between $\{u_{i,1}, \ldots, u_{i, 5}\}$ and $X_i \cup Y_i$, all of which are incident to $u_{i,2}$; there are at most $1200^2$ chords with ends in $\bigcup_{i \in [80], j \in [5]} \{w_{i, j}, w_{i,j}', u_{i, j}\}$; and we may assume that the number of chords in $\bigcup_{i \in [80], j \in [5]} V(P_{i, j})$ is $O(\sqrt{k})$, by \Cref{lem:sqrtk-different-Kll,lem:sqrtk-same-Kll}. 
        The cycle $\C^{\tau}(\alpha_1, \ldots, \alpha_{80})$ has precisely $3\tau$ more chords than $\C^0(\alpha_1, \ldots, \alpha_{80})$, as $u_{i,3}'$ gives three extra chords to $u_{i,3}$, $u_{i,4}$, and $u_{i,5}$ (see \Cref{fig:annoying-case-b}).
        The total number of chords in $\C^{\tau}(\alpha_1, \ldots, \alpha_{80})$ is hence
        \begin{equation*}
            f_G(k) + \sum_{i \in [80]}(\alpha_i^2 + \alpha_i) + 3\tau,
        \end{equation*}
        where $f_G(k) = O(\sqrt{k})$. 
        Now choose $\tau \in \{0, 1, 2, 3\}$ be such that $3\tau$ equals $k - f_G(k)$ modulo $4$. By \Cref{lem:sum-skewed-squares}, there exist positive integers $\alpha_1, \ldots, \alpha_{80}$, such that $\sum_{i \in [80]} \alpha_i(\alpha_i + 1) = k - f_G(k) - 3\tau$. The cycle $\C^{\tau}(\alpha_1, \ldots, \alpha_{80})$ has exactly $k$ chords.
    \end{proof}

\section{Conclusion} \label{sec:conc}
    
    We showed that the family of graphs with no cycle with exactly $k$ chords is $\chi$-bounded, for every integer $k$ which is either sufficiently large or of form $k = \ell(\ell-2)$, where $\ell \ge 3$ is an integer. This was already known to hold for $k \in \{1,2,3\}$ (see \cite{trotignon2010structure,aboulker2015excluding}).
    An obvious follow-up problem, which is a conjecture due to Aboulker and Bousquet \cite{aboulker2015excluding}, would be to extend this to all $k \ge 1$.
    
    \begin{conj}[Aboulder--Bousquet \cite{aboulker2015excluding}]\label{Conjecture_Aboulder_Bousquet}
        For \emph{every} $k \ge 1$ there is a function $f_k$ such that, if $G$ is a graph with no cycle with exactly $k$ chords, then $\chi(G) \le f_k(\omega(G))$.
    \end{conj}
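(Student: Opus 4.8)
The plan is to run the architecture of the proof of \Cref{thm:main} unchanged, and to isolate the single place where largeness of $k$ is actually used. Concretely, \Cref{thm:bt}, \Cref{cor:bt-induced-bip} and \Cref{lem:find-disjoint-complete-bip} hold for every $k \ge 1$ with no largeness hypothesis, so the induction on $\omega(G)$ carried out in the proof of \Cref{thm:main-almost} reduces \Cref{Conjecture_Aboulder_Bousquet} to the following statement, for each fixed $k$: if $p, \ell \gg k$ and a $p$-extraction of $G$ contains $\ell$ pairwise vertex-disjoint induced copies of $K_{\ell,\ell}$ with no edges between them, then $G$ has a cycle with exactly $k$ chords. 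For $k \in \{1,2,3\}$ this (indeed the full conjecture for those $k$) is already known~\cite{trotignon2010structure,aboulker2015excluding}, and for $k$ above the threshold of \Cref{thm:main} it is \Cref{lem:find-cycle-from-Kll-exact}. Thus the whole conjecture is equivalent to handling the \emph{finite} remaining range of intermediate values $4 \le k < k_0$ in the rich-$K_{\ell,\ell}$ setting.

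For these remaining values I would proceed exactly as in \Cref{sec:exact}, but replace the number-theoretic engine (\Cref{lem:twenty-squares} and \Cref{lem:sum-skewed-squares}, both of which are only available for $k$ large) by an explicit finite analysis. All the cleaning steps --- \Cref{lem:sqrtk-same-Kll}, \Cref{lem:sqrtk-different-Kll}, \Cref{lem:cleaning-step-one} and \Cref{lem:cases} --- never used that $k$ is large, so they carry over verbatim; after applying them the chord counts of the cycles one can build have the shape $c_G + \sum_i (\alpha_i + \sigma_i)^2$ or $c_G + \sum_i \alpha_i(\alpha_i+1)$, up to a bounded additive correction produced by the small `father' gadgets of the last two cases in the proof of \Cref{lem:find-cycle-from-Kll-exact}, where $c_G = O(\sqrt{k})$ is a fixed overhead. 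Since $k$ is now bounded, $c_G$ is bounded, and it suffices to check that every integer in a bounded window is realizable using this flexibility: one may use a single $K_{\ell,\ell}$ (a cleaned path of length $2a+1$ or $2a$ closed by an induced unimodal path contributes essentially a square or a pronic number of chords), combine a few of the $K_{\ell,\ell}$'s to get a sum of a few such terms, and insert one of the bounded `ear' gadgets to shift the total by a prescribed small amount. As the target window is finite, this becomes a concrete --- if tedious --- bookkeeping task.

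The step I expect to be the real obstacle is precisely whether this bookkeeping can be completed for the smallest $k$. When $k$ is tiny, the cycles one can extract from the $K_{\ell,\ell}$ structure may already be forced to carry more chords than $k$: the cleaned connections between distinct copies contribute an $O(\sqrt{k})$ overhead, the near-complete adjacencies of the fathers contribute further unavoidable chords, and a single copy of $K_{\ell,\ell}$ only realizes a sparse set of small chord counts (essentially near-squares). So one must either drive the entire overhead below $k$ by a much more careful, quantitative version of the cleaning lemmas --- which seems plausible for, say, $k$ of moderate size but delicate for $k$ close to $4$ --- or, if some small residue of $k$ turns out to be genuinely unreachable from this structure, fall back on a different, local argument in the spirit of the structural decomposition of Trotignon and Vu\v{s}kovi\'c~\cite{trotignon2010structure} for $\C_1$ and of the ad hoc arguments of Aboulker and Bousquet~\cite{aboulker2015excluding} for $\C_2$ and $\C_3$, proving directly that graphs in $\C_k$ with no cycle with exactly $k$ chords have bounded chromatic number in terms of $\omega$. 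Extending such a decomposition beyond $k = 3$ --- where the structure of $\C_k$ is essentially unexplored --- is, to my mind, the principal difficulty: the `$K_{\ell,\ell}$ plus unimodal paths' machinery that drives the present paper seems intrinsically to require $k$ to be large enough to absorb the $O(\sqrt{k})$ and $O(1)$ error terms.
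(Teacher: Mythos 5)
The statement you are asked to prove is not a theorem of the paper but an open conjecture: the paper itself establishes it only for $k$ sufficiently large (\Cref{thm:main}, with the explicit threshold $k \ge 10^{14}$) and for $k$ of the form $\ell(\ell-2)$ (\Cref{thm:main-special}), and restates the general case in the conclusion precisely because it remains unresolved. Your proposal does not close this gap. You correctly observe that \Cref{thm:bt}, \Cref{cor:bt-induced-bip} and \Cref{lem:find-disjoint-complete-bip} are uniform in $k$, and that the induction in the proof of \Cref{thm:main-almost} reduces everything to finding a cycle with exactly $k$ chords inside the ``many disjoint induced $K_{\ell,\ell}$'s'' structure. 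But from that point on your argument is a plan, not a proof: for the finite range of intermediate $k$ you propose ``explicit finite analysis'' and ``bookkeeping,'' and then candidly concede that for small $k$ the unavoidable $O(\sqrt{k})$ and $O(1)$ overheads may already exceed $k$, so that one would have to fall back on structural decompositions that do not exist beyond $k=3$. Naming the obstacle is not the same as overcoming it.

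There is also a concrete error in the intermediate claims. You assert that \Cref{lem:sqrtk-same-Kll}, \Cref{lem:sqrtk-different-Kll}, \Cref{lem:cleaning-step-one} and \Cref{lem:cases} ``never used that $k$ is large, so they carry over verbatim.'' This is false: \Cref{lem:sqrtk-different-Kll} is stated under the hypothesis $\ell \gg k \gg 1$, and its proof ends with the chain $k' \le k - 2\sqrt{k} + 81 \le k$, which requires $\sqrt{k} \ge 41$; similarly \Cref{lem:sqrtk-same-Kll} needs $a = \lfloor \sqrt{k} - 2\rfloor$ to be a usable path length and the window $[k-8\sqrt{k},\,k]$ to be nonempty and reachable, and \Cref{lem:cleaning-step-one} is stated with $m \gg k \gg 1$. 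So even the reduction to a ``bounded window'' of achievable chord counts is not available verbatim for small $k$; you would have to reprove these lemmas with explicit constants, and for $k$ near $4$ the constructions they rely on (paths of length roughly $2\sqrt{k}$ inside a $K_{\ell,\ell}$, contributing roughly $k$ chords) degenerate entirely. In short, the proposal correctly diagnoses why the conjecture is open but does not prove it, and it overstates the portability of the paper's technical lemmas to small $k$.
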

    
  It would also be interesting to get a better understanding of whether graphs with small clique number and large chromatic number contain cycles with $k$ chords that have some sort of further structure. Whereas our proof essentially just produces a cycle with $k$ chords, there are conjectures that it should be possible to find more. A $k$-fan is defined as a path $P$ with one additional vertex $v$ added that has exactly $k$ neighbours on $P$. It is easy to see that $k$-fans contain a cycle with $k-2$ chords. Thus the following would be a strengthening of the results in this paper and of Conjecture~\ref{Conjecture_Aboulder_Bousquet}.
	    \begin{conj}[Davies \cite{davies2023triangle}]
        For \emph{every} $k \ge 1$ there is a function $f_k$ such that, if $G$ is a graph with $k$-fan, then $\chi(G) \le f_k(\omega(G))$.
    \end{conj}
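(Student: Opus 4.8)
The plan is to prove the statement by induction on $\omega := \omega(G)$, exactly as in the proof of \Cref{thm:main-almost} (using \Cref{lem:find-disjoint-complete-bip} and \Cref{cor:bt-induced-bip}): assume $\chi(G)$ is much larger than any function value defined so far and that $G$ has no induced $k$-fan, and aim to produce one. The base case $\omega = 1$ is vacuous, and at the inductive step we may pass to a $p$-extraction $G_p$ with $\chi(G_p) \ge 2^{-p}\chi(G)$ still enormous. The first (elementary but crucial) observation is that a $k$-fan is a sub-structure of a wheel: given an induced cycle $C$ and a vertex $v$ with at least $k$ neighbours on $C$, pick $k$ of them, say $u_1, \dots, u_k$, that are consecutive among $N(v)\cap C$ in the cyclic order, and let $P$ be the arc of $C$ from $u_1$ to $u_k$ through $u_2, \dots, u_{k-1}$; then $P$ is an induced path and $v$ has exactly the $k$ neighbours $u_1,\dots,u_k$ on it, so $P \cup \{v\}$ is an induced $k$-fan.

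Consequently, the entire machinery of \Cref{sec:bt} (following Bousquet--Thomass\'e), which produces a large wheel in a graph of large chromatic number, automatically produces a $k$-fan: the only point in the proof of \Cref{thm:bt} where a cycle with exactly $k$ chords is exhibited directly, rather than a large wheel, is \Cref{lem:all-x-bad}, whose hypothesis is that every vertex of the highly connected graph $H$ (whose $1$-subdivision sits inside $G_p$) lies on many triangulated edges. So the plan reduces to re-proving \Cref{lem:all-x-bad} with ``a cycle with exactly $k$ chords'' replaced by ``an induced $k$-fan''; equivalently, in the triangle-heavy case we want to exploit the abundance of triangles to build a fan rather than a cycle with spread-out chords.

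For that case I would aim at the following target structure: a single hub vertex $v \in G_p$ together with $k$ neighbours $z_1, \dots, z_k$ of $v$, joined consecutively by unimodal paths $P_1, \dots, P_{k-1}$ taken in pairwise distinct high layers, so that $z_1 P_1 z_2 P_2 \cdots P_{k-1} z_k$ is an induced path on which $v$ has exactly the $k$ neighbours $z_1, \dots, z_k$. By \Cref{obs:unimodal} the interiors of the $P_i$ send no edges to $v$ or to one another, and a ``$\alpha$-good''/independent cleaning analogous to conditions (G1)--(G4) of \Cref{sec:bt} would be used to guarantee that the second vertices of the unimodal paths are not adjacent to $v$ and do not create extra edges between consecutive $P_i$'s. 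Each triangulated edge $e = xy$ supplies, for many layers $j$, a common $j$-father $f$ of $\sub(e)$ and of one of $x,y$, so $\{f,x,\sub(e)\}$ (say) spans a triangle while $\sub(e)$ is also adjacent to $y$; after Ramsey-type cleaning over a matching of triangulated edges and over many layers (as in \Cref{lem:cleaning-step-one} and \Cref{lem:cases}) one would try to arrange that these triangles hang off a common hub and do not interact across blocks. The role of the triangles is precisely to give the $z_i$'s the extra neighbours they need, so that they are not forced to be endpoints of the induced path --- the subdivision vertices $\sub(e)$ have only two neighbours in $H_\sbb$, which is the recurring difficulty.

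The main obstacle is exactly this last point: in Cases 1--3 of \Cref{lem:all-x-bad} the ``extra'' chords are inherently distributed over many distinct father vertices $f(e)$, and funnelling them all through a single hub appears to require genuinely new input. I would try to resolve it with a finer dichotomy at the start of \Cref{subsec:triangles}: either some vertex is a common father of many triangulated edges --- yielding, after cleaning, a large collection of pairwise ``parallel'' triangles attached to one vertex $v$, which can be opened into an induced $k$-fan by routing unimodal connections through distinct high layers as above --- or no vertex is a father of too many triangulated edges, in which case there is enough room to keep the unimodal routes independent and still recover a good collection of unimodal paths, leading to a wheel and hence to a $k$-fan. Making this dichotomy quantitative, and in particular controlling the hub's neighbourhood on the final path so that it has size exactly $k$ (not $k\pm 1$, which is where the low degree of the subdivision vertices bites), is where the real work lies; I expect the bookkeeping to be of a similar flavour to, and comparable in difficulty with, \Cref{sec:exact} of the present paper.
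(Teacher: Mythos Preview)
This statement is an open \emph{conjecture}, not a theorem: the paper records it in the concluding section and explicitly says it is currently known only for $k=1$. There is therefore no proof in the paper to compare your proposal against.

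Your proposal is, by your own account, a sketch of where the difficulties lie rather than a proof, and two concrete gaps stand out. First, you invoke the three-case split from the proof of \Cref{thm:main-almost} but only discuss the $K_{\ell,\ell}$-free branch. The branch in which the $p$-extraction contains many pairwise disjoint induced copies of $K_{\ell,\ell}$ is untouched, and under your induced-fan reading this is a genuine obstacle: an induced $K_{\ell,\ell}$ has no induced path on more than three vertices, hence no induced $k$-fan for $k\ge 3$, so the machinery of \Cref{sec:approximate} and \Cref{sec:exact} (which builds cycles whose chords are distributed across many $K_{\ell,\ell}$'s rather than concentrated at a single hub) would have to be replaced by something entirely new. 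Second, even within the $K_{\ell,\ell}$-free branch, your dichotomy for redoing \Cref{lem:all-x-bad} is not clearly workable: in the sub-case where no single vertex is a common father of many triangulated edges, nothing you wrote feeds back into the good-collection route of \Cref{lem:good-x}, and the direct constructions in Cases~1--3 of the proof of \Cref{lem:all-x-bad} produce chords spread over many distinct fathers, not a fan. These are precisely the kinds of obstructions one would expect for a problem the paper itself leaves open.
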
	
    Currently, this conjecture is only known to hold for $k=1$ (where is it equivalent to the ``$k=1$'' case of Conjecture~\ref{Conjecture_Aboulder_Bousquet}).

\paragraph{Acknowldegments}
We are grateful to anonymous referees for giving careful comments, especially on the number of iterations in \Cref{lem:find-cycle-from-Kll-exact}.

\bibliography{chi}
\bibliographystyle{amsplain}
\end{document}